\pgfplotsset{compat=1.18}
\definecolor{myblue}{rgb}{0.25,0.45,0.99}
\definecolor{myorange}{rgb}{0.8500, 0.3250, 0.0980}
\definecolor{myyellow}{rgb}{0.9290, 0.6940, 0.1250}
\definecolor{mypurple}{rgb}{0.4940, 0.1840, 0.5560}
\definecolor{mygreen}{rgb}{0.4660, 0.6740, 0.1880}
\let\OLDthebibliography\thebibliography
\renewcommand\thebibliography[1]{
  \OLDthebibliography{#1}
  \setlength{\parskip}{0.4pt}
  \setlength{\itemsep}{2.8pt plus 0.0ex}
}
\newtheorem{theorem}{Theorem}[section]
\newtheorem{theorem*}[theorem]{Theorem*}
\newtheorem{lemma}[theorem]{Lemma}
\newtheorem{proposition}[theorem]{Proposition}
\newtheorem{thm/conj}[theorem]{Theorem/Conjecture}
\theoremstyle{definition}
\newenvironment{example}
{\pushQED{\qed}\examplex}
{\popQED\endexamplex}
\newtheorem{definition}[theorem]{Definition}
\newtheorem{remark}[theorem]{Remark}
\theoremstyle{remark}
\newcommand{\C}{\mathbb{C}}
\newcommand\restr[2]{{
  \left.\kern-\nulldelimiterspace 
  #1
  \vphantom{\big|} 
  \right|_{#2}
  }}
\date{}
\title{\textbf{Lissajous Varieties}
}
\author{Francesco Maria Mascarin and Simon Telen }
\begin{document}

\maketitle

\begin{abstract}
    \noindent This paper studies affine algebraic varieties parametrized by sine and cosine functions, generalizing algebraic Lissajous figures in the plane. We show that, up to a combinatorial factor, the degree of these varieties equals the volume of a polytope. We deduce defining equations from rank constraints on a matrix with polynomial entries. We discuss applications of Lissajous varieties in dynamical systems, in particular the Kuramoto model. This leads us to study connections with convex optimization and Lissajous~discriminants. 
\end{abstract}

\section{Introduction}
A matrix $A \in \mathbb{Q}^{d \times n}$ defines a linear space ${\rm Row}(A) \subseteq \mathbb{C}^n$ by taking the $\mathbb{C}$-linear span of its rows. In this paper, we are interested in the algebraic varieties obtained by taking the coordinate-wise cosine of points in translates of ${\rm Row}(A)$. Concretely, let us define the map 
\begin{equation} \label{eq:cos} {\rm cos}: \mathbb{C}^n \longrightarrow \mathbb{C}^n, \quad (x_1, \ldots, x_n) \, \longmapsto \, (\cos(x_1), \ldots, \cos(x_n)).\end{equation}
For any vector $b \in \mathbb{C}^n$, we let ${\cal L}_{A,b}$ be the image of the affine-linear space $L_{A,b} = {\rm Row}(A) -  \frac{b\pi}{2} = \{ x -  \frac{b \pi}{2}\, : \, x \in {\rm Row}(A)\} \subseteq \mathbb{C}^n$ under the map ${\rm cos}$. In symbols, we set 
\begin{equation} \label{eq:XAb} {\cal L}_{A,b} \, = \, {\rm cos}(L_{A,b}).\end{equation}
Notice that ${\cal L}_{A,{\bf 0}} = {\rm cos}({\rm Row}(A))$ and ${\cal L}_{A,{\bf 1}} = {\rm sin}({\rm Row}(A))$, where ${\bf 0} \in \mathbb{C}^n$ and ${\bf 1} \in \mathbb{C}^n$ are the all-zeros and the all-ones vector respectively, and ${\rm sin}$ is the coordinate-wise sine map, analogous to \eqref{eq:cos}. It is convenient to write ${\cal C}_A = {\cal L}_{A,{\bf 0}}$ and ${\cal S}_A = {\cal L}_{A,{\bf 1}}$ for these special cases. 

The requirement that $A$ has rational entries ensures that ${\cal L}_{A,b}$ is an irreducible affine variety of dimension ${\rm rank}(A)$. In particular, the set $\cos(L_{A,b}) \subseteq \mathbb{C}^n$ is Zariski closed in $\mathbb{C}^n$; see Lemma \ref{lem:noclosure}. Our first goal is to determine its degree and defining equations. We present some familiar examples and illustrate some features of ${\cal L}_{A,b}$. 

\begin{example}
The plane curves obtained from $A \in \mathbb{Q}^{1 \times 2}$ and $b \in \mathbb{C}^2$ are known in the literature as \emph{Lissajous curves}, which motivates the title of our paper. Such curves describe two objects driven in simple harmonic motion along the $x$- and $y$-axis \cite{greenslade1993all}. In that context, one usually allows real entries for $A$, in which case ${\cal L}_{A,b}$ is not necessarily an algebraic curve. Moreover, one restricts to real vectors $b \in \mathbb{R}^2$ and focuses on the real points of ${\cal L}_{A,b}$. 
\end{example}

\begin{example} \label{ex:circleintro} The real points of $X = \{ (x,y) \in \mathbb{C}^2 \, : \, x^2 + y^2 = 1 \}$ form a circle of radius one, centered at the origin. The curve $X$ is parametrized by $(\cos(t), \sin(t)) = (\cos(t),\cos(t-\frac{\pi}{2}))$. It is the Lissajous variety $X = {\cal L}_{A,b}$ with $A = \begin{pmatrix}
1 & 1 
\end{pmatrix} \in \mathbb{Q}^{1 \times 2}$ and $b = (0, 1)$.
\end{example}

\begin{example} \label{ex:elliptopeintro}
Let $A = \left ( \begin{smallmatrix} 1 & 0 & -1 \\ -1 & 1 & 0 \end{smallmatrix} \right)$. The surfaces ${\cal C}_A$ and ${\cal S}_A$ are parametrized as follows: 
\[ {\cal C}_A \, : \, (\cos(t_1-t_2), \, \cos(t_2),\,  \cos(t_1) ), \quad {\cal S}_A \,: \, (\sin(t_1-t_2), \,  \sin(t_2), \,  -\sin(t_1)). \]
Figure \ref{fig:C3} shows these surfaces. 
\begin{figure}
\centering
\includegraphics[height = 5cm]{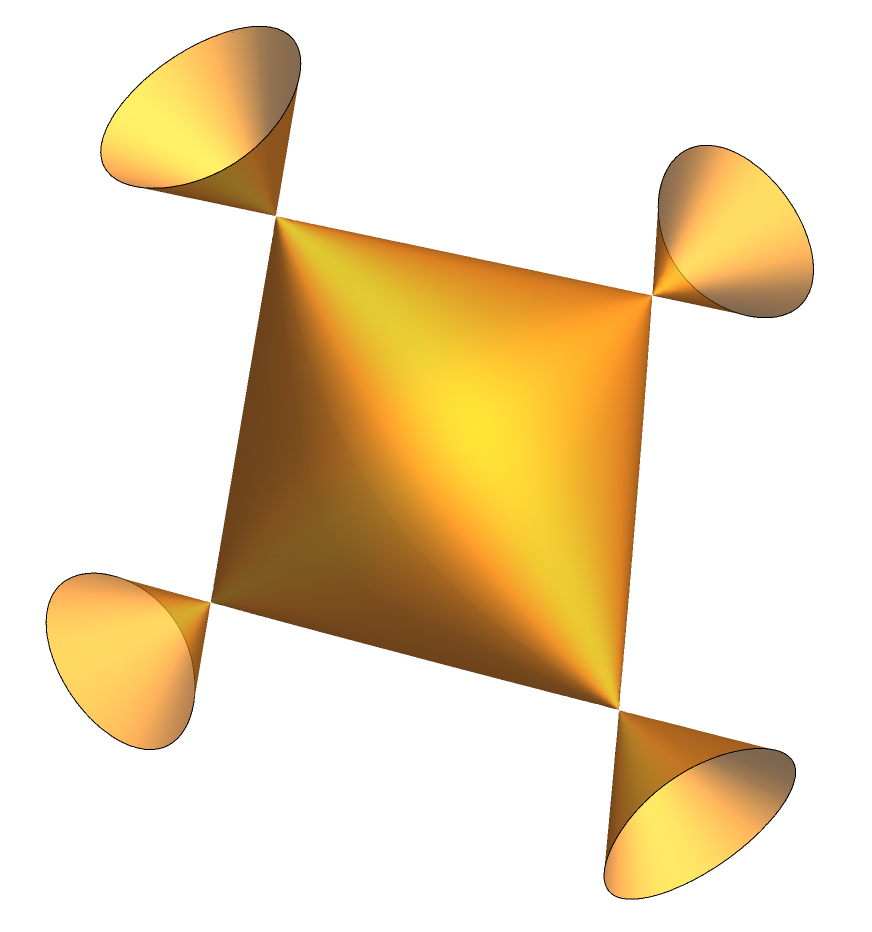} \quad  \quad \quad \quad 
\includegraphics[height = 5cm]{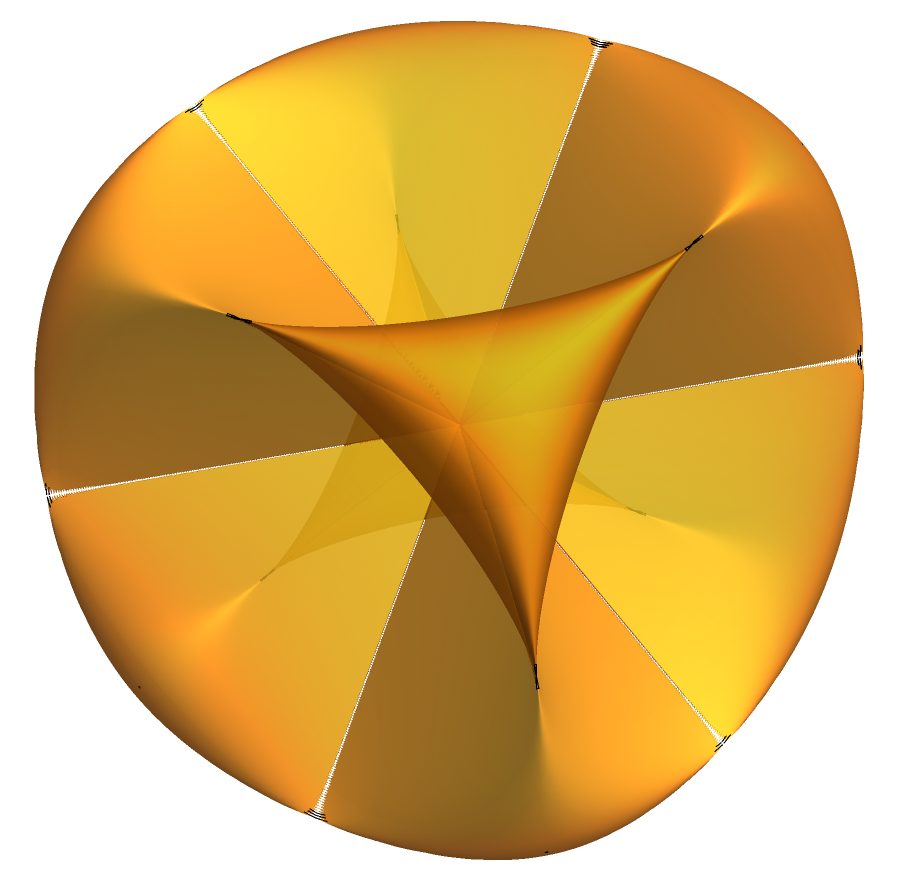}
\caption{The surfaces ${\cal C}_A$ (left) and ${\cal S}_A$ (right) from Example \ref{ex:elliptopeintro}.}
\label{fig:C3}
\end{figure}
We compute that ${\cal C}_A$ is given by $1 + 2\, xyz - x^2 - y^2 - z^2 = 0$. This is Cayley's cubic surface with four nodes. It is the algebraic boundary of the elliptope
\[ {\cal E}_3 \, = \, \left \{ (x,y,z) \in \mathbb{R}^3 \, : \, \begin{pmatrix}
1 & x & y \\ x & 1 & z \\ y & z & 1
\end{pmatrix}  \text{ is positive semi-definite}  \right \}, \]
a standard example of a spectrahedron in semi-definite programming \cite{vinzant2014spectrahedron}. The surface ${\cal S}_A$ is 
\begin{equation} \label{eq:SA} x^4 + 4 \, x^2y^2z^2 - 2 \,  x^2 y^2  - 2 \, x^2 z^2+ y^4 - 2 \, y^2z^2 + 
 z^4 \, = \, 0. \end{equation}
It has degree six. As the parameter $b$ varies, the Lissajous varieties ${\cal L}_{A,b}$ form a family of surfaces. A generic fiber is reduced and irreducible of degree six. The special fiber for $b = {\bf 0}$ is the cubic surface ${\cal C}_A$ with multiplicity two. This is a general phenomenon, see Section \ref{sec:2}. 
\end{example}

In the next few paragraphs we summarize our main contributions and sketch the outline of the paper. Since ${\cal L}_{A,b}$ only depends on the affine space $L_{A,b}$, one may replace $A$ by a matrix with the same row span without altering ${\cal L}_{A,b}$. Therefore, it is not restrictive to assume that $A$ is of rank $d$, and we may clear denominators so that $A$ has integer entries. Additionally, we make the following technical but equally non-restrictive assumption. The lattice $\mathbb{Z}^d$ is an abelian group with entry-wise addition. Let $\mathbb{Z}A \subseteq \mathbb{Z}^d$ be the subgroup generated by the columns $a_1, \ldots, a_n \in \mathbb{Z}^d$ of $A$. We assume that $\mathbb{Z}A = \mathbb{Z}^d$. The following statement, proved and stated in more detail in Section \ref{sec:2}, gives the degree of ${\cal L}_{A,b}$, i.e., the number of intersection points of ${\cal L}_{A,b}$ with a generic affine-linear space of complementary dimension $n- \dim({\cal L}_{A,b})$. 

\begin{theorem} \label{thm:mainintro}
Let $A = \begin{pmatrix}
a_1 & a_2 & \cdots & a_n
\end{pmatrix} \in \mathbb{Z}^{d \times n}$ be such that ${\rm rank}(A) = d$ and $\mathbb{Z}A = \mathbb{Z}^d$. Let $P_A \subset \mathbb{R}^d$ be the polytope obtained as the convex hull of the lattice points $\{ \pm a_1, \ldots, \pm a_n \} \subset \mathbb{Z}^d$. For any $b \in \mathbb{C}^n$, the variety ${\cal L}_{A,b}$ is irreducible of dimension $d$. Moreover, for generic $b \in \mathbb{C}^n$, the degree of ${\cal L}_{A,b}$ is $\deg ({\cal L}_{A,b}) = 2^{-{\rm CL}_A} \, d! \, {\rm vol}(P_A)$, where ${\rm vol}(\cdot)$ denotes the euclidean volume and ${\rm CL}_A$ is the number of zero entries of a generic vector in the kernel of $A: \mathbb{R}^n \rightarrow \mathbb{R}^d$. 
\end{theorem}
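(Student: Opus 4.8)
The plan is to realize ${\cal L}_{A,b}$ as the image of an explicit Laurent-monomial map, compute its degree by intersecting with generic hyperplanes and pulling back, and then divide the resulting solution count by the degree of the parametrization. First I would rewrite cosine through exponentials: writing a point of ${\rm Row}(A)$ as $t^{\top}A$ with $t\in\C^d$ and setting $z_k=e^{i t_k}$, $\beta_j=e^{-i b_j\pi/2}$, the $j$-th coordinate of $\cos(t^\top A-\tfrac{b\pi}{2})$ becomes $\tfrac12(\beta_j z^{a_j}+\beta_j^{-1}z^{-a_j})$. Hence ${\cal L}_{A,b}$ is the closure of the image of the map $\phi\colon(\C^*)^d\to\C^n$, $z\mapsto(\tfrac12(\beta_j z^{a_j}+\beta_j^{-1}z^{-a_j}))_{j=1}^n$. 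I would factor $\phi=c\circ\mu$, where $\mu\colon z\mapsto(\beta_j z^{a_j})_j$ has image a translated subtorus $W$ and $c\colon w\mapsto(\tfrac12(w_j+w_j^{-1}))_j$. The hypothesis $\mathbb{Z}A=\mathbb{Z}^d$ makes $\mu$ injective, so $W$ is a torus of dimension $d$; irreducibility and $\dim{\cal L}_{A,b}=d$ then follow from Lemma~\ref{lem:noclosure}.

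Next I would reduce the degree to a lattice-point count. Intersecting ${\cal L}_{A,b}$ with $d$ generic affine hyperplanes $\sum_j c_{kj}y_j=e_k$ and pulling back along $\phi$ gives a system $f_1=\cdots=f_d=0$ of Laurent polynomials on $(\C^*)^d$, each $f_k$ supported on $\{\pm a_1,\dots,\pm a_n\}\cup\{0\}$ and hence with Newton polytope $P_A$ (which is full-dimensional, so $0$ lies in its interior). If $\phi$ is generically $m$-to-one, then a generic hyperplane section lifts to $m\cdot\deg({\cal L}_{A,b})$ simple torus solutions of this system, so $\deg({\cal L}_{A,b})=\tfrac1m\,N$, where $N$ is the number of solutions of the $f_k$ in $(\C^*)^d$.

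I would then compute $m$. Over a generic $y$ in the image, the fibre of $c$ in $(\C^*)^n$ consists of the $2^n$ points obtained by replacing some coordinates $w_j=s_j$ by $w_j=s_j^{-1}$, where $s_j+s_j^{-1}=2y_j$. Such a sign pattern, indexed by $S=\{j:w_j=s_j^{-1}\}$, lies on $W$ precisely when $\prod_{j\in S}s_j^{2u_j}=1$ for every $u\in\ker(A)\cap\mathbb{Z}^n$; for generic $s$ this forces $u_j=0$ for all $j\in S$, i.e. $S\subseteq Z=\{j:e_j\in{\rm Row}(A)\}$. Since $|Z|$ equals the number ${\rm CL}_A$ of forced zeros of a generic kernel vector, the generic fibre of $\phi$ has exactly $m=2^{{\rm CL}_A}$ points. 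Here genericity of $b$ is essential: at special $\beta$ (for instance $\beta_j=\pm1$, the pure sine/cosine case) the extra symmetry $z\mapsto z^{-1}$ enlarges the fibre and lowers the degree, exactly as in the $b={\bf 0}$ specialization of Example~\ref{ex:elliptopeintro}.

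Finally, $N=d!\,{\rm vol}(P_A)$ by the Bernstein--Kushnirenko theorem, which gives the mixed volume ${\rm MV}(P_A,\dots,P_A)=d!\,{\rm vol}(P_A)$ as the count for generic coefficients; combining with $m=2^{{\rm CL}_A}$ yields the formula. The main obstacle is the exactness of this count: the Bernstein bound is only an upper bound for the constrained, ``paired'' coefficients of the $f_k$, so I must check that no solutions escape to the toric boundary, i.e. that for each facet normal $w$ the initial system ${\rm in}_w(f_1)=\cdots={\rm in}_w(f_d)=0$ has no solution in $(\C^*)^d$. Because central symmetry of $P_A$ prevents $a_j$ and $-a_j$ from lying on the same proper face, each ${\rm in}_w(f_k)$ inherits generic coefficients on its support, and I expect these facial subsystems to be generically inconsistent for generic $b$ and generic hyperplanes; verifying this inconsistency is the technical heart of the argument.
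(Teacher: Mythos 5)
Your proposal is correct and follows essentially the same route as the paper: factoring $\phi$ through the scaled torus $W=Y_{A,\beta}\cap(\C^*)^n$, counting the fiber of the Joukowski-type map $c$ via sign-flips to get $2^{{\rm CL}_A}$ (the paper's Theorems \ref{thm:dimension} and \ref{thm:genericbeta}, where the case split between monomials that are non-constant on $W$ and the constant ones requiring $\beta^{2m^C}\neq 1$ is made explicit), and applying Kushnirenko with exactly your non-degeneracy argument that no proper face of the centrally symmetric $P_A$ contains both $a_j$ and $-a_j$ (Theorem \ref{thm:degree}). The only step you leave as an expectation --- the generic inconsistency of the facial subsystems --- is closed in the paper by precisely the observation you already made, so nothing essential is missing.
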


The quantity $d!\,  {\rm vol}(P_A)$ is called the normalized volume of $P_A$. 
The polytope $P_A$ in Example \ref{ex:circleintro} is the line segment $[-1,1]$, with normalized volume two, which is the degree of the circle. The polygon $P_A$ in Example \ref{ex:elliptopeintro} is a hexagon with normalized volume $6 = \deg {\cal S}_A$. 
The notion of ``generic $b$'' in Theorem \ref{thm:mainintro} will be made more precise in Section \ref{sec:2}, and we give a formula for $\deg({\cal L}_{A,b})$ which holds for any $A,b$, but requires more notation (Theorem \ref{thm:degree}). We show that the hypersurface ${\cal C}_A$ obtained from the incidence matrix $A$ of the $n$-cycle graph $C_n$ is the cycle polynomial from \cite[Section 4.2]{sturmfels2010multivariate}. Using Theorem \ref{thm:degree} and a result from \cite{chen2018counting}, we prove a formula for the degree of the cycle polynomial (Proposition \ref{prop:degreecycle}). 

In Section \ref{sec:3}, we construct a matrix with polynomial entries whose maximal minors form a set of set-theoretic defining equations of ${\cal L}_{A,b}$ (Theorem \ref{thm:RankCondition}). In particular, a point $x^* \in \mathbb{C}^n$ belongs to ${\cal L}_{A,b}$ if and only if that matrix is not of full rank when evaluated at $x^*$. 

Section \ref{sec:4} establishes the role of Lissajous varieties of type ${\cal S}_A$ in computing steady state angles for the Kuramoto equations of coupled oscillators. In this case, $A$ is the incidence matrix of a graph $G$ which encodes the coupling. We will see that the equilibrium angles correspond to the intersection points of ${\cal S}_A$ with an affine-linear space of the form $A \, x = \omega$. In particular, the degree of ${\cal S}_A$ bounds the number of isolated solutions. 

In Section \ref{sec:optim}, we generalize the Kuramoto equations and construct dynamical systems whose steady state varieties are linear sections $A \, x = \omega$ of a Lissajous variety. Under certain assumptions on $\omega$, we show that one of the intersection points in ${\cal L}_{A,b} \cap \{A \,x  = \omega\}$ corresponds to a stable equilibrium. It is the unique solution to a convex optimization problem. For varying $\omega$, these equilibria parametrize a subset of the Lissajous variety,  called its positive part.

In Section \ref{sec:5}, we study the discriminant of the equilibrium equations ${\cal L}_{A,b} \cap \{A \,x  = \omega\}$ in the parameters $\omega$. We call this the Lissajous discriminant. We bound its degree and, when $A$ comes from a graph $G$, we describe its symmetries in terms of those of $G$. This is a first step in the bifurcation analysis of our dynamical system introduced in Section \ref{sec:optim}, see Example \ref{ex:bifur}.

\paragraph{Related work.} Replacing ``${\rm cos}$'' by ``${\rm exp}$'' in \eqref{eq:XAb}, we obtain the affine toric variety $Y_A$ of the matrix $A$ up to scaling the coordinates by ${\rm exp}(b_1), \ldots, {\rm exp}(b_n)$. Such scaled toric varieties appear in our study of the defining equations of ${\cal L}_{A,b}$, see Sections \ref{sec:3} and \ref{sec:4}.
The variety ${\cal C}_A = {\cal L}_{A,{\bf 0}}$ is called a \emph{Chebyshev variety} associated with $A$ in \cite[Section 5]{bel2024chebyshev}. This is motivated by the fact that for $d = 1$, the curve ${\cal C}_A$ is alternatively obtained from a parametrization by Chebyshev polynomials of the first kind. Previous work on the algebraic geometry of Kuramoto equations includes \cite{chen2018counting,harrington2023kuramoto,mehta2015algebraic}. These works use a different ``algebraic geometrization'' of the equations, see Equation (3) in \cite{mehta2015algebraic} and Definition 1.4 in \cite{harrington2023kuramoto}.
Some Lissajous discriminants were studied using machine learning techniques in \cite[Section 5.3]{bernal2023machine}.

\paragraph{Acknowledgements.} We are grateful to Monique Laurent for helpful comments on a previous version of this manuscript. We thank Georgios Korpas, Hal Schenck and Rainer Sinn for useful conversations. This work has been supported by the European Union’s HORIZON–MSCA-2023-DN-JD programme under
the Horizon Europe (HORIZON) Marie Skłodowska-Curie Actions, grant agreement 101120296 (TENORS).

\normalsize

\section{Dimension and degree} \label{sec:2}

As mentioned in the Introduction, if the matrices $A_1 \in \mathbb{Q}^{d_1 \times n}, A_2 \in \mathbb{Q}^{d_2 \times n}$ have the same row span, then we have an equality of Lissajous varieties ${\cal L}_{A_1,b} = {\cal L}_{A_2,b}$. In particular, after clearing denominators, we may assume that the matrix $A$ has integer entries. We fix $A \in \mathbb{Z}^{d \times n}, b \in \mathbb{C}^n$.

Choosing coordinates on ${\rm Row}(A)$, we can parametrize ${\cal L}_{A,b}$ as follows: 
\[ \phi_{A,b} \, : \, \mathbb{C}^d \longrightarrow \mathbb{C}^n, \quad t = (t_1, \ldots, t_d) \longmapsto ( {\rm cos}(a_1 \cdot t - b_1 \tfrac{\pi}{2}), \ldots, {\rm cos}(a_n \cdot t - b_n \tfrac{\pi}{2} )). \]
Here $a_j \in \mathbb{Q}^d$ is the $j$-th column of $A$ and $a_j \cdot t$ is the standard dot product. We clearly have ${\cal L}_{A,b} = {\rm im} \, \phi_{A,b}$. We obtain a rational parametrization of our Lissajous variety as follows.  Set 
\[ \beta_\ell \, = \,  e^{-i b_\ell \tfrac{\pi}{2}}, \,  \, \ell = 1, \ldots, n \quad \text{with} \quad  i = \sqrt{-1}.\]  
Using Euler's identity $\cos( \theta )= \tfrac{1}{2}(e^{i\theta} + e^{-i \theta})$, we see $\phi_{A,b}(t) = \psi_{A,b}(e^{i t_1}, \ldots, e^{i t_d})$, with 
\begin{equation} \label{eq:psiAb} \psi_{A,b} \, : \, (\mathbb{C}^*)^d \rightarrow \mathbb{C}^n, \quad v = (v_1, \ldots, v_d) \longmapsto \Big ( \frac{\beta_1 v^{a_1} + \beta_1^{-1}v^{-a_1} }{2}, \ldots, \frac{\beta_n v^{a_n} + \beta_n^{-1}v^{-a_n} }{2} \Big ).\end{equation}
Here $v^{a_j} = \prod_{k = 1}^d v_k^{a_{kj}}$. We show below that ${\rm im} \,  \phi_{A,b} = {\rm im} \, \psi_{A,b}$, and therefore ${\cal L}_{A,b} = {\rm im} \, \psi_{A,b}$. 

\begin{lemma} \label{lem:noclosure}
    The set ${\cal  L}_{A,b} = {\rm im} \, \psi_{A,b} = {\rm im} \,  \phi_{A,b} \subseteq \mathbb{C}^n$ is a closed affine subvariety of $\mathbb{C}^n$. Moreover, ${\cal L}_{A,b}$ is irreducible of dimension ${\rm rank}(A)$. 
\end{lemma}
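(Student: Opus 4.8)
The plan is to prove everything for the Laurent-monomial parametrization $\psi_{A,b}$ from \eqref{eq:psiAb}; the equality ${\rm im}\,\phi_{A,b} = {\rm im}\,\psi_{A,b}$ is then immediate, since the coordinatewise exponential $t \mapsto (e^{it_1}, \ldots, e^{it_d})$ maps $\mathbb{C}^d$ onto $(\mathbb{C}^*)^d$ and $\phi_{A,b}$ factors through it. The main idea is to factor $\psi_{A,b} = \pi \circ \mu$, where $\mu \colon (\mathbb{C}^*)^d \to (\mathbb{C}^*)^n$, $v \mapsto (\beta_1 v^{a_1}, \ldots, \beta_n v^{a_n})$ is a translated monomial map and $\pi \colon (\mathbb{C}^*)^n \to \mathbb{C}^n$ is the coordinatewise map $(u_1, \ldots, u_n) \mapsto (\tfrac{u_1 + u_1^{-1}}{2}, \ldots, \tfrac{u_n + u_n^{-1}}{2})$. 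This is precisely the substitution $u_\ell = \beta_\ell v^{a_\ell}$, under which $\beta_\ell^{-1} v^{-a_\ell} = u_\ell^{-1}$.

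The step I expect to be the crux is closedness in $\mathbb{C}^n$, which is exactly where $\cos$ behaves better than the toric map $\exp$. I would deduce it from the fact that $\pi$ is a \emph{finite} morphism: each coordinate satisfies the monic relation $u_\ell^2 - 2 x_\ell u_\ell + 1 = 0$, so $\mathbb{C}[u_1^{\pm 1}, \ldots, u_n^{\pm 1}]$ is module-finite over its subring generated by the $x_\ell = \tfrac{u_\ell + u_\ell^{-1}}{2}$, and finite morphisms are closed. The image of $\mu$ is the translate by $\beta = (\beta_1, \ldots, \beta_n)$ of the subtorus $T_A = \{(v^{a_1}, \ldots, v^{a_n}) : v \in (\mathbb{C}^*)^d\}$, which, being the image of a homomorphism of tori (equivalently, cut out by the binomials $u^m = 1$ for $m$ in the saturated lattice $\{m \in \mathbb{Z}^n : Am = 0\}$), is closed in $(\mathbb{C}^*)^n$. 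Hence ${\cal L}_{A,b} = \pi(\beta\,T_A)$ is closed in $\mathbb{C}^n$. Geometrically, the point is that the boundary of the torus, where some coordinate of $\mu(v)$ tends to $0$ or $\infty$, is pushed to infinity by $\pi$ since $\tfrac{u + u^{-1}}{2} \to \infty$ there, so $\pi(\beta T_A)$ acquires no finite boundary even though $\beta T_A$ is not closed in $\mathbb{C}^n$.

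Irreducibility and the dimension count then follow formally. As the image of the irreducible variety $(\mathbb{C}^*)^d$ under the morphism $\psi_{A,b}$, the set ${\cal L}_{A,b}$ is irreducible, and being closed it is an irreducible affine variety. For the dimension, $\dim T_A = {\rm rank}(A)$, because the monomial map $v \mapsto (v^{a_1}, \ldots, v^{a_n})$ has fibers equal to cosets of the subgroup $\{v : v^{a_\ell} = 1 \text{ for all } \ell\}$, of dimension $d - {\rm rank}(A)$; since the restriction of the finite morphism $\pi$ to $\beta T_A$ is again finite and therefore preserves dimension, we get $\dim {\cal L}_{A,b} = \dim \pi(\beta T_A) = \dim T_A = {\rm rank}(A)$. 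The only non-formal inputs are the standard facts that finite morphisms are closed and preserve the dimension of images, together with the closedness of a subtorus in the ambient torus, all of which are routine.
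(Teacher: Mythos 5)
Your proof is correct, but it takes a genuinely different route from the paper's. You factor $\psi_{A,b}$ through the translated subtorus $\beta T_A\subset(\mathbb{C}^*)^n$ and observe that the coordinatewise map $u\mapsto\tfrac{1}{2}(u+u^{-1})$ is a \emph{finite} morphism $(\mathbb{C}^*)^n\to\mathbb{C}^n$ (each $u_\ell$ satisfies the monic relation $u_\ell^2-2x_\ell u_\ell+1=0$ and $u_\ell^{-1}=2x_\ell-u_\ell$), so that closedness, irreducibility, and the dimension count all follow from standard properties of finite morphisms and of images of torus homomorphisms. The paper instead normalizes $A$ to a form $\tilde A$ in which the first $r$ coordinates each involve a single torus variable, and then runs an analytic argument: it uses that the Zariski closure of the image equals its Euclidean closure, lifts a convergent path $x^*(t)$ to a path $v(t)$ in the torus, and argues coordinatewise that each $v_j(t)$ must converge in $\mathbb{C}^*$ because $\tfrac{1}{2}(\beta_jv_j^{a_{jj}}+\beta_j^{-1}v_j^{-a_{jj}})$ blows up as $v_j\to 0$ or $\infty$; the dimension is then read off from surjectivity of the projection to the first $r$ coordinates. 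Your version is more structural and arguably cleaner: it isolates the conceptual reason the cosine image is closed (properness of the Joukowski-type map pushes the torus boundary to infinity) and delivers the dimension statement for free via dimension preservation under finite maps, at the cost of invoking the machinery of integral extensions and closedness of subtori. The paper's version is more elementary and hands-on, though its path-lifting step quietly relies on a curve-selection-type argument that your approach avoids entirely. Both proofs are valid; no gap.
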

\begin{proof}
    The equality ${\rm im} \, \phi_{A,b} = {\rm im} \, \psi_{A,b}$ follows from the fact that $\phi_{A,b}: \mathbb{C}^d \rightarrow \mathbb{C}^n$ is the composition of $\psi_{A,b}: (\mathbb{C}^*)^d \rightarrow \mathbb{C}^n$ with the surjective map $\mathbb{C}^d \rightarrow (\mathbb{C}^*)^d$ given by $t \mapsto (e^{it_1}, \ldots, e^{it_d})$. The image ${\rm im} \, \phi_{A,b} = {\rm im} \, \psi_{A,b}$ only depends on the row span of $A$. After applying integer row operations and column permutations to $A$, and after dropping rows whose entries are all zero, we obtain a matrix $\tilde{A}$ satisfying ${\rm im} \,  \psi_{A,b} = {\rm im} \, \psi_{\tilde{A},b}$ of the form
    \[ \tilde{A} \, = \, \begin{pmatrix}
        a_{11} & 0 & \cdots & 0 & a_{1,r+1} & \cdots & a_{1,n} \\ 
        0 & a_{22} & \cdots & 0 & a_{2,r+1} & \cdots & a_{2,n} \\ 
        \vdots & \vdots & \ddots & \vdots & \vdots & \cdots & \vdots \\ 
        0 & 0 & \cdots & a_{rr} & a_{r,r+1} & \cdots & a_{r,n}
    \end{pmatrix} \quad \in \, \mathbb{Z}^{r \times n},\]
    for some positive integers $a_{jj}, j = 1, \ldots, r$. Here $r \leq d$ is the rank of $A$. We want to show that the image of $\psi_{ \tilde A,b} \colon (\mathbb{C}^*)^r \rightarrow \mathbb{C}^n$ is Zariski closed.
    Since $\mathbb{C}$ is algebraically closed, the Zariski closure of ${\rm im} \, \psi_{ \tilde A,b}$ equals its Euclidean closure. Hence, if $x^* \in \mathbb{C}^n$ lies in the closure of ${\rm im} \, \psi_{\tilde A,b}$, there is a smooth path $\{ x^*(t) : t \in (0,1] \}$ contained in ${\rm im} \, \psi_{A,b}$ whose limit for $t \to 0$ is $x^*$. This lifts to a smooth path $v(t) = (v_1(t), \ldots, v_r(t))$ in $ (\mathbb{C}^*)^r$ with $\psi_{\tilde A,b}(v(t)) = x^*(t)$. We~have
    \begin{equation} \label{eq:limit}  x^*_j \, = \,  \lim_{t \rightarrow 0} \tfrac{1}{2} (\beta_j \, v_j(t)^{a_{jj}} + \beta_j^{-1} \, v_j(t)^{-a_{jj}}) \quad  \text{ for } j = 1, \ldots, r .\end{equation}
    Since the limit \eqref{eq:limit} lies in $\mathbb{C}$, we have $\lim_{t \rightarrow 0} v_j(t) \in \mathbb{C}^*$. Indeed, if the limit $\lim_{t \rightarrow 0} v_j(t)$ does not exist in $\mathbb{C}^*$, then $v_j(t)$ approaches $0$ or $\infty$ for $t \rightarrow 0$. In both cases, $x_j(t)$ would not converge in $\mathbb{C}$. Hence, we have that $x^* = \lim_{t \rightarrow 0} \psi_{\tilde A,b}(v(t)) =\psi_{\tilde A,b}(\lim_{t \rightarrow 0}v(t)) \in {\rm im} \, \psi_{\tilde A,b}$. 

    We have now shown that ${\cal L}_{A,b} = {\rm im} \, \psi_{A,b} = {\rm im} \, \psi_{\tilde A, b}$ is closed in $\mathbb{C}^n$. Since ${\cal L}_{A,b}$ is unirational, it is irreducible. It is also clear that the projection of ${\rm im} \, \psi_{\tilde A, b}$ to the first $r$ coordinates is surjective onto $\mathbb{C}^r$. Hence, we have $\dim {\cal L}_{A,b} = r = {\rm rank}(A)$.
\end{proof}

The equality ${\cal L}_{A,b} = {\rm im}\,  \psi_{A,b}$ implies an algorithm for computing the defining ideal of~${\cal L}_{A,b}$:

\begin{enumerate}
\itemsep0em 
    \item Work in the ring $\mathbb{C}[v_1,\ldots, v_d, w_1, \ldots, w_d, x_1,\ldots,x_n]$ with $2d+n$ variables.
    \item For each column $a_i$ of $A$, write $a_i = a_i^+ - a_i^-$ with $a_i^{+}, a_i^- \in \mathbb{N}^d$.
    \item Define $I = \langle 2 \, x_j - \beta_1 v^{a_j^+}w^{a_j^-} - \beta_1^{-1}v^{a_j^-} w^{a_j^+}, \, \, v_kw_k-1, \, \, j = 1, \ldots, n, \, \, k = 1, \ldots, d\rangle$.
    \item Compute the elimination ideal $I \cap \mathbb{C}[x_1, \ldots, x_n]$. The result equals $I({\cal L}_{A,b})$.
\end{enumerate}
The ideal $I$ in step 3 has $n + d$ generators. Adding variables $w_k$ and imposing $v_kw_k-1$ is an effective way of working in the Laurent polynomial ring $\mathbb{C}[v_1^{\pm 1}, \ldots, v_d^{\pm 1}]$: $w_k$ plays the role of $v_k^{-1}$. The correctness of this algorithm follows from the parametrization $\psi_{A,b}$. Notice that, for the ground field we can use any field extension of $\mathbb{Q}$ containing $\beta_1, \ldots, \beta_n$. In particular, if $\beta_\ell$ is rational for $\ell = 1, \ldots, n$, then one can replace $\mathbb{C}$ by $\mathbb{Q}$ in step 1. This happens when $b = \mathbf{0}$ and ${\cal L}_{A,b} = {\cal C}_A$. If $b = \mathbf{1}$ and ${\cal L}_{A,b} = {\cal S}_A$, then one can work over the field $\mathbb{Q}[i] = \mathbb{Q}[z]/\langle z^2 + 1 \rangle$. 

The integer matrix $A$ defines an affine toric variety $Y_A \subseteq \mathbb{C}^n$ obtained as the closure of the image of the Laurent monomial map $v \mapsto (v^{a_1}, \ldots, v^{a_n})$, $v \in (\mathbb{C}^*)^d$ \cite{CoxLittleSchenck2011,Telen2025MonomialMaps}. Its ideal is 
\[ I_A \, = \, \langle y^{u} - y^{w} \, : \, u, w \in \mathbb{N}^n \, \, \text{and} \, \,  A(u-w) = 0 \rangle \, \subseteq \, \mathbb{C}[y_1, \ldots, y_n].\]
A modified or \emph{scaled} toric variety $Y_{A,\beta}$ is obtained from $A \in \mathbb{Z}^{d \times n}$, $\beta \in (\mathbb{C}^*)^n$ as follows:
\[ Y_{A,\beta} \, = \, \{ (\beta_1 \, y_1, \ldots, \beta_n \, y_n) \, : \,  y \in Y_A \} \, \subseteq \, \mathbb{C}^n. \]
This variety is parametized by $v \mapsto (\beta_1 v^{a_1}, \ldots, \beta_n v^{a_n})$. One checks that if $I_A = I(Y_{A}) = \langle y^{u_1} - y^{w_1}, \ldots, y^{u_r}-y^{w_r} \rangle$, then $I_{A,\beta} = I(Y_{A,\beta})$ is generated by $\beta^{w_k} y^{u_k} - \beta^{u_k}y^{w_k}, k = 1, \ldots, r$. 

\begin{theorem}\label{thm:dimension}
    The Lissajous variety ${\cal L}_{A,b} \subseteq \mathbb{C}^n$ is the image of the variety
    \[ {\cal Y}_{A,b} \, = \, \{(x,y) \in \mathbb{C}^n \times (\mathbb{C}^*)^n \, : \, y \in Y_{A,\beta} \text{ and } x_j \, = \, \tfrac{1}{2}(y_j + y_j^{-1}) \text{ for } j = 1, \ldots,n \}, \]
    under the coordinate projection $\pi_{A,b}: {\cal Y}_{A,b}  \rightarrow \mathbb{C}^n$. 
    Here $\beta = (e^{-i b_1 \tfrac{\pi}{2}}, \ldots, e^{-i b_n \tfrac{\pi}{2}}) \in (\mathbb{C}^*)^n$.
\end{theorem}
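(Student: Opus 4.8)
The plan is to prove the set equality ${\cal L}_{A,b} = \pi_{A,b}({\cal Y}_{A,b})$ by a double inclusion, using two ingredients: the identity ${\cal L}_{A,b} = {\rm im}\,\psi_{A,b}$ from Lemma~\ref{lem:noclosure}, and the following description of the torus points of the scaled toric variety,
\begin{equation}\label{eq:plan-claim}
Y_{A,\beta} \cap (\mathbb{C}^*)^n \; = \; \big\{ (\beta_1 v^{a_1}, \ldots, \beta_n v^{a_n}) \; : \; v \in (\mathbb{C}^*)^d \big\}.
\end{equation}
In words, the dense torus orbit of $Y_{A,\beta}$ coincides with the image of its monomial parametrization. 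The inclusion $\supseteq$ in \eqref{eq:plan-claim} is immediate, since $Y_{A,\beta}$ is by definition the Zariski closure of that image; the inclusion $\subseteq$ is the real content, and I return to it as the main obstacle below.

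Granting \eqref{eq:plan-claim}, both inclusions of the theorem become short. For ${\cal L}_{A,b} \subseteq \pi_{A,b}({\cal Y}_{A,b})$, I would write a point of ${\cal L}_{A,b}$ as $x = \psi_{A,b}(v)$ with $v \in (\mathbb{C}^*)^d$ and set $y_j = \beta_j v^{a_j}$. Then $y \in Y_{A,\beta} \cap (\mathbb{C}^*)^n$, and reading off \eqref{eq:psiAb} gives $\tfrac{1}{2}(y_j + y_j^{-1}) = \tfrac{1}{2}(\beta_j v^{a_j} + \beta_j^{-1} v^{-a_j}) = x_j$, so $(x,y) \in {\cal Y}_{A,b}$ and $\pi_{A,b}(x,y) = x$. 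Conversely, for $(x,y) \in {\cal Y}_{A,b}$ we have $y \in Y_{A,\beta} \cap (\mathbb{C}^*)^n$, so \eqref{eq:plan-claim} produces $v \in (\mathbb{C}^*)^d$ with $y_j = \beta_j v^{a_j}$; the defining relation $x_j = \tfrac{1}{2}(y_j + y_j^{-1})$ then reads exactly $x = \psi_{A,b}(v) \in {\rm im}\,\psi_{A,b} = {\cal L}_{A,b}$.

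The step I expect to be the real obstacle is the inclusion $\subseteq$ in \eqref{eq:plan-claim}, which is the assertion that the monomial map hits \emph{every} torus point of $Y_{A,\beta}$. I would argue as follows. The coordinatewise scaling $y \mapsto (\beta_1^{-1} y_1, \ldots, \beta_n^{-1} y_n)$ is an automorphism of $(\mathbb{C}^*)^n$ carrying $Y_{A,\beta}$ to $Y_A$, so it suffices to show that every $z \in Y_A \cap (\mathbb{C}^*)^n$ has the form $(v^{a_1}, \ldots, v^{a_n})$. Since $I_A = \langle y^{u} - y^{w} : A(u-w) = 0\rangle$, a torus point $z$ lies on $Y_A$ iff $z^m = 1$ for every $m \in \ker_{\mathbb{Z}} A = \{ m \in \mathbb{Z}^n : Am = 0\}$, which is immediate from the binomial generators on the torus. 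Here I use the standing hypothesis $\mathbb{Z}A = \mathbb{Z}^d$: the columns of $A$ generate $\mathbb{Z}^d$, so there is an integer matrix $C \in \mathbb{Z}^{n \times d}$ with $AC = I_d$. Taking $v = (z^{c_1}, \ldots, z^{c_d})$, where $c_1, \ldots, c_d \in \mathbb{Z}^n$ are the columns of $C$, a direct computation gives $v^{a_k} = z^{(CA)e_k}$, and since $AC = I_d$ implies $ACA = A$ we get $A\big((CA)e_k - e_k\big) = (ACA - A)e_k = 0$. Thus $(CA)e_k - e_k \in \ker_{\mathbb{Z}} A$, whence $z^{(CA)e_k} = z^{e_k} = z_k$ and $v^{a_k} = z_k$ for all $k$, which proves \eqref{eq:plan-claim}.

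The rest is routine bookkeeping: tracking the $\beta$-scaling consistently and the elementary exponent manipulation above. It is worth recording where $\mathbb{Z}A = \mathbb{Z}^d$ enters, namely in the existence of the right inverse $C$, which both guarantees surjectivity of the monomial parametrization onto the torus orbit and supplies the explicit preimage $v = z^C$. (The hypothesis is convenient rather than strictly necessary: surjectivity of the associated torus homomorphism $(\mathbb{C}^*)^d \to Y_A \cap (\mathbb{C}^*)^n$ holds in general because $A$ induces an injection $\mathbb{Z}^n/\ker_{\mathbb{Z}} A \hookrightarrow \mathbb{Z}^d$, but the explicit inverse gives the cleanest route under the standing assumptions.)
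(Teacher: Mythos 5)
Your proof is correct and follows essentially the same route as the paper: both rest on identifying ${\cal Y}_{A,b}$ with the torus points of $Y_{A,\beta}$ and recognizing the composition with $\pi_{A,b}$ as the map $\psi_{A,b}$, whose image is ${\cal L}_{A,b}$ by Lemma~\ref{lem:noclosure}. The only difference is that you prove the surjectivity of the scaled monomial parametrization onto $Y_{A,\beta} \cap (\mathbb{C}^*)^n$ explicitly via a right inverse of $A$ (and correctly note how to drop the assumption $\mathbb{Z}A = \mathbb{Z}^d$), whereas the paper cites this as basic toric geometry.
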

\begin{proof}
The proof is an adaptation of the proof of \cite[Theorem 5.2]{bel2024chebyshev}. 
The map $y \mapsto (\tfrac{1}{2}(y_1 + y_1^{-1}), \ldots, \tfrac{1}{2}(y_n + y_n^{-1}), y)$ induces an isomorphism $Y_{A,\beta} \cap (\mathbb{C}^*)^n \simeq {\cal Y}_{A,b}$.
Hence, we know from basic toric geometry that ${\cal Y}_{A,b}$ is a torus of dimension ${\rm rank}(A)$ \cite{CoxLittleSchenck2011,Telen2025MonomialMaps}. Composing this map with the coordinate projection $\pi_{A,b}$ and setting $y_j = \beta_j v^{a_j}$, we obtain precisely the map $\psi_{A,b}$ with image ${\cal L}_{A,b}$. This implies that ${\cal L}_{A,b} = \pi_{A,b}({\cal Y}_{A,b})$. 
\end{proof}
Deleting spurious rows if necessary, we may assume that $A$ has rank $d$. To state a degree formula for ${\cal L}_{A,b}$, let $[\mathbb{Z}^d : \mathbb{Z}A]$ be the index of the lattice $\mathbb{Z}A$ generated by the columns of $A$ in the ambient lattice $\mathbb{Z}^d$. The \emph{degree} $\deg \phi$ of a dominant morphism $\phi: X \rightarrow Y$ between irreducible varieties of dimension $d$ is the cardinality of a generic fiber. 

\begin{theorem} \label{thm:degree}
Let $A \in \mathbb{Z}^{d \times n}$ be such that ${\rm rank}(A) = d$. Let $P_A \subset \mathbb{R}^d$ be the polytope obtained as the convex hull of the lattice points $\{ \pm a_1, \ldots, \pm a_n \} \subset \mathbb{Z}^d$. For any $b \in \mathbb{C}^n$, we have 
\[ \deg ({\cal L}_{A,b}) \, = \,  \frac{ d! \, {\rm vol}(P_A)}{\deg \pi_{A,b} \cdot [\mathbb{Z}^d : \mathbb{Z} A]} \, ,\]
where ${\rm vol}(\cdot)$ denotes the euclidean volume and $\pi_{A,b}: {\cal Y}_{A, b} \rightarrow {\cal L}_{A,b}$ is as in Theorem \ref{thm:dimension}. 
\end{theorem}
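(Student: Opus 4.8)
The plan is to use Theorem \ref{thm:dimension}, which expresses $\mathcal{L}_{A,b}$ as the image $\pi_{A,b}(\mathcal{Y}_{A,b})$ of a torus of dimension $d$, and to compute the degree via the multiplicativity of degrees along the chain of maps. First I would set up the degree-counting framework: by definition, $\deg(\mathcal{L}_{A,b})$ counts the intersection points of $\mathcal{L}_{A,b}$ with a generic affine-linear space of complementary dimension $n-d$. Since a generic linear section pulls back under $\pi_{A,b}$ to a generic linear section of $\mathcal{Y}_{A,b}$, and $\pi_{A,b}$ is generically $(\deg \pi_{A,b})$-to-one onto its image, the number of intersection points on $\mathcal{L}_{A,b}$ equals $\tfrac{1}{\deg \pi_{A,b}}$ times the corresponding intersection number on $\mathcal{Y}_{A,b}$. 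This reduces the problem to computing the degree of $\mathcal{Y}_{A,b}$ as a subvariety of $\mathbb{C}^n$.

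Next I would identify $\mathcal{Y}_{A,b}$ concretely. From the proof of Theorem \ref{thm:dimension}, the map $y \mapsto (\tfrac{1}{2}(y_1+y_1^{-1}),\ldots,\tfrac{1}{2}(y_n+y_n^{-1}),y)$ gives an isomorphism $Y_{A,\beta}\cap(\mathbb{C}^*)^n \simeq \mathcal{Y}_{A,b}$, so $\mathcal{Y}_{A,b}$ is parametrized by $v \mapsto (\tfrac{1}{2}(\beta_j v^{a_j}+\beta_j^{-1}v^{-a_j}))_j$ together with the toric coordinates $\beta_j v^{a_j}$. The key observation is that the degree of $\mathcal{Y}_{A,b}$ — more precisely, the number of points in a generic codimension-$d$ linear section — is governed by the Newton polytope of the parametrizing Laurent monomials. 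The coordinate functions involve the exponent vectors $\{\pm a_1,\ldots,\pm a_n\}$, whose convex hull is exactly the polytope $P_A$. By Kushnirenko's theorem (the generic case of the BKK bound), the number of solutions in $(\mathbb{C}^*)^d$ to a generic system of $d$ Laurent polynomial equations supported on $P_A$ is $d!\,\mathrm{vol}(P_A)$, the normalized volume. I would argue that a generic linear section $\mathcal{Y}_{A,b}\cap\{\text{$d$ generic hyperplanes}\}$ translates, via the parametrization, into such a generic sparse system with support $P_A$.

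The degree of the parametrizing map must then be divided out. The monomial map $v \mapsto (v^{a_1},\ldots,v^{a_n})$ onto the toric variety $Y_A$ has degree equal to the lattice index $[\mathbb{Z}^d : \mathbb{Z}A]$ — this is the standard fact that a monomial map is generically $[\mathbb{Z}^d:\mathbb{Z}A]$-to-one, since the fiber over a generic point is a coset of the finite group $\mathrm{Hom}(\mathbb{Z}^d/\mathbb{Z}A,\mathbb{C}^*)$. Scaling by $\beta$ does not change this count. Collecting the factors: the normalized volume $d!\,\mathrm{vol}(P_A)$ counts preimages in $(\mathbb{C}^*)^d$ of a generic section; dividing by the parametrization degree $[\mathbb{Z}^d:\mathbb{Z}A]$ gives $\deg(\mathcal{Y}_{A,b})$; and dividing further by $\deg\pi_{A,b}$ gives $\deg(\mathcal{L}_{A,b})$, yielding the stated formula.

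The main obstacle I anticipate is justifying that the Kushnirenko count applies cleanly. One must verify that intersecting $\mathcal{Y}_{A,b}$ with generic hyperplanes in the $x$-coordinates corresponds to a system whose support is genuinely $P_A = \mathrm{conv}\{\pm a_1,\ldots,\pm a_n\}$, and that this system is generic enough for Kushnirenko's bound to be attained (no solutions escaping to the toric boundary, the mixed volume achieving the generic volume). The subtlety is that the coordinate $x_j = \tfrac{1}{2}(\beta_j v^{a_j}+\beta_j^{-1}v^{-a_j})$ is supported on the two points $\pm a_j$, so a linear combination $\sum_j c_j x_j$ has Newton polytope contained in $P_A$; the genericity of the hyperplanes should ensure the support is exactly $P_A$ and that all roots lie in the torus. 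Care is needed because the $\beta_j$ are fixed rather than generic, so one must check that the relevant coefficients do not vanish and that the BKK bound is still sharp for this structured family — this is precisely where the hypothesis absorbed into $\deg\pi_{A,b}$ (which can exceed $1$ for special $b$, as in the factor-of-two phenomenon for $b=\mathbf{0}$ in Example \ref{ex:elliptopeintro}) does its work.
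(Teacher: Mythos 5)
Your proposal follows essentially the same route as the paper: pull the generic linear section back through the parametrization $\psi_{A,b}$, count solutions in $(\mathbb{C}^*)^d$ by Kushnirenko's theorem applied to the support $\{\pm a_1,\ldots,\pm a_n\}$ with convex hull $P_A$, and divide by $\deg\psi_{A,b} = \deg\pi_{A,b}\cdot[\mathbb{Z}^d:\mathbb{Z}A]$, the last factor coming from the monomial parametrization of $Y_{A,\beta}$.

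The one step you flag as an anticipated obstacle --- that the structured coefficients (the coefficient $c_j\beta_j/2$ at $a_j$ is tied to the coefficient $c_j\beta_j^{-1}/2$ at $-a_j$) might prevent the Kushnirenko bound from being attained --- is a real issue, and you leave it unresolved. The paper closes it with a short geometric argument: since ${\rm rank}(A)=d$, the centrally symmetric polytope $P_A$ is full-dimensional with the origin in its interior, so no proper face of $P_A$ contains both $a_j$ and $-a_j$. Consequently each facial subsystem sees at most one of the two linked coefficients per index $j$, there are no coefficient dependencies within any facial subsystem, and generic $c_{jk}$ already makes the system non-degenerate in Kushnirenko's sense. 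Your closing suggestion that this sharpness issue is ``where the hypothesis absorbed into $\deg\pi_{A,b}$ does its work'' is a misattribution: the count $|S_v| = d!\,{\rm vol}(P_A)$ holds for \emph{every} $\beta\in(\mathbb{C}^*)^n$ by the facial argument, and $\deg\pi_{A,b}$ plays the entirely separate role of measuring the generic fiber size of the covering ${\cal Y}_{A,b}\to{\cal L}_{A,b}$ (equivalently, how many of the $2^n$ sign-flipped points $(x,y_1^{\pm1},\ldots,y_n^{\pm1})$ remain on $Y_{A,\beta}$, as analyzed in Theorem \ref{thm:genericbeta}). With the facial non-degeneracy argument supplied, your outline becomes a complete proof matching the paper's.
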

\begin{proof}
    This is a generalization of \cite[Theorem 5.3]{bel2024chebyshev}. The degree of ${\cal L}_{A,b}$ is the cardinality of 
    \begin{equation*} \label{ex:xsolutions}
        S_x \, = \, \{ x \in {\cal L}_{A,b} \, : \, c_{j0} + c_{j1} \, x_1 + \cdots + c_{jn} \, x_n \,  =  \, 0, \, \, \,  j = 1, \ldots, d \}
    \end{equation*}
    for generic complex coefficients $c_{jk}$. We compare the set $S_x$ to the set $S_v$ given by 
    \[ S_v \, = \, \{ v \in (\mathbb{C}^*)^d \, : \, c_{j0} + \tfrac{c_{j1}}{2}(\beta_1 v^{a_1} + \beta_1^{-1} v^{-a_1}) + \cdots + \tfrac{c_{jn}}{2}(\beta_n v^{a_n} +  \beta_n^{-1} v^{-a_n}) \, = \, 0, \, \, \, j = 1, \ldots, d \}. \]
    By Lemma \ref{lem:noclosure}, we have $\psi_{A,b}(S_v) = S_x$ for any choice of $c_{jk}$. 
    Moreover, this correspondence is generically $\deg \psi_{A,b}$-to-one. 
    The denominator $\deg \pi_{A,b} \cdot [\mathbb{Z}^d : \mathbb{Z} A]$ in our degree formula is the degree of $\psi_{A,b}$, since $\psi_{A,b}$ is the composition of the map $v \mapsto (\psi_{A,b}(v), \beta_1 v^{a_1}, \ldots , \beta_n v^{a_n})$ 
    with $\pi_{A,b}$ (Theorem \ref{thm:dimension}). 
    Indeed, the first map has the same degree as $v \mapsto ( \beta_1 v^{a_1}, \ldots, \beta_n v^{a_n})$, which parametrizes $Y_{A,\beta}$ and whose degree is $[\mathbb{Z}^d: \mathbb{Z} A]$, see \cite{Telen2025MonomialMaps}. 
    
    It remains to show that $S_v$ consists of $d! \, {\rm vol}(P_A)$ points. This is an application of Kushnirenko's theorem \cite[Th\'eor\`eme III']{kouchnirenko1976polyedres}, which predicts the maximal (and expected) number of solutions to a system of Laurent polynomial equations with identical monomial support. In order to apply this theorem, we must ensure that our equations are \emph{non-degenerate} with respect to the polytope $P_A$. This means that none of the facial subsystems have a solution in $(\mathbb{C}^*)^d$. By our assumption that ${\rm rank}(A) = d$, the centrally symmetric polytope $P_A$ is full-dimensional and it contains the origin in its interior. This implies that no  face of $P_A$, except $P_A$ itself, contains both the lattice points $a_j$ and $-a_j$. Therefore, there are no dependencies among the coefficients appearing in any facial subsystem. Hence, for generic $c_{jk}$, our equations are indeed non-degenerate and Kushnirenko's upper bound $|S_v| \leq d! \, {\rm vol}(P_A)$ is attained.
\end{proof}

To complete the proof of Theorem \ref{thm:mainintro}, we must introduce some more notation. We write ${\rm Circ}(A)$ for the set of \emph{circuits} of $A$. That is, ${\rm Circ}(A)$ is the set of all subsets of $[n] = \{1, \ldots, n\}$ indexing a minimal set of linearly dependent columns of $A$. For any integer vector $m = (m_1, \ldots, m_n) \in \mathbb{Z}^n$, we define the \emph{support} of $m$ as follows: ${\rm supp}(m) = \{ j \in [n] \, : \, m_j \neq 0 \}$. For each circuit $C \in {\rm Circ}(A)$ there is a unique (up to sign) integer vector $m^C = (m^C_1, \ldots, m^C_n) \in \mathbb{Z}^n$ of minimal length satisfying $A \cdot m_C = 0$ and ${\rm supp}(m^C) = C$. This vector encodes the unique linear relation between the columns indexed by the circuit $C$. A \emph{coloop} of the matrix $A$ is an element $j \in [n]$ which does not belong to any circuit. We denote the set of coloops by ${\rm Coloops}(A) \subseteq [n]$, and its cardinality by ${\rm CL}_A = \#{\rm Coloops}(A)$. The integer ${\rm CL}_A$ is the number of zero entries of a generic vector in the kernel of $A$, as in Theorem \ref{thm:mainintro}.

\begin{theorem} \label{thm:genericbeta}
    Assume that $\mathbb{Z} A = \mathbb{Z}^d$, so that ${\rm rank}(A) = d$ and $[\mathbb{Z}^d: \mathbb{Z} A] = 1$. If for each circuit $C \in {\rm Circ}(A)$ the vector $b = (b_1, \ldots, b_n) \in \mathbb{C}^n$ satisfies 
\begin{equation} \label{eq:noteven}
    m^C \cdot b \, = \, \sum_{j \in C} m^C_j \, b_j \quad \text{is not an even integer,}
\end{equation}
then $\deg \pi_{A,b} = 2^{{\rm CL}_A}$ and the formula from Theorem \ref{thm:degree} simplifies to $\deg({\cal L}_{A,b}) = \frac{d! \, {\rm vol}(P_A)}{2^{{\rm CL}_A}}$. 
\end{theorem}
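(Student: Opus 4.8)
The plan is to compute $\deg \pi_{A,b}$ directly as the number of points in a generic fiber of the projection $\pi_{A,b} \colon \mathcal{Y}_{A,b} \to \mathcal{L}_{A,b}$, and then to substitute into Theorem~\ref{thm:degree} using $[\mathbb{Z}^d:\mathbb{Z}A]=1$. Since $\mathcal{Y}_{A,b} \cong Y_{A,\beta} \cap (\mathbb{C}^*)^n$ by the proof of Theorem~\ref{thm:dimension}, the fiber over a generic point $x = \psi_{A,b}(v^*) \in \mathcal{L}_{A,b}$ consists of the points $y \in Y_{A,\beta} \cap (\mathbb{C}^*)^n$ with $x_j = \tfrac{1}{2}(y_j + y_j^{-1})$ for all $j$. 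I would first record that the torus coset $Y_{A,\beta} \cap (\mathbb{C}^*)^n$ is cut out by the binomial conditions $y^m = \beta^m$ ranging over the lattice $L = \{m \in \mathbb{Z}^n : Am = 0\}$ of integer relations among the columns of $A$.

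Next I would exploit that for generic $x$ each equation $y_j + y_j^{-1} = 2x_j$ has exactly the two reciprocal roots $y_j^*$ and $(y_j^*)^{-1}$, where $y^* = (\beta_j (v^*)^{a_j})_j$ is the distinguished preimage. Thus every fiber point has the form $y = ((y_j^*)^{\epsilon_j})_j$ for a sign vector $\epsilon \in \{\pm 1\}^n$, and distinct $\epsilon$ give distinct points. Writing $D_\epsilon m = (\epsilon_1 m_1, \ldots, \epsilon_n m_n)$ and using $(y^*)^{D_\epsilon m} = \beta^{D_\epsilon m}(v^*)^{A D_\epsilon m}$, membership of such a $y$ in $Y_{A,\beta}$ amounts to $\beta^{D_\epsilon m}(v^*)^{A D_\epsilon m} = \beta^m$ for every $m \in L$. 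Here the monomial $(v^*)^{A D_\epsilon m}$ is forced to a fixed constant, which for generic $v^*$ is possible only when the exponent $A D_\epsilon m$ vanishes; so the admissible $\epsilon$ are exactly those satisfying (i) $A D_\epsilon m = 0$ and (ii) $\beta^{D_\epsilon m} = \beta^m$ for all $m \in L$. The genericity is clean: there are finitely many $\epsilon$, and for each one violating (i) the bad locus in $v^*$-space is proper and closed, so a generic $v^*$ (equivalently a generic $x$) avoids all of them.

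The main step is to count the admissible sign vectors. The coloop coordinates contribute the factor $2^{{\rm CL}_A}$ for free: if $j \in {\rm Coloops}(A)$ then $m_j = 0$ for every $m \in L$, so flipping $\epsilon_j$ alters neither $D_\epsilon m$ nor the conditions (i), (ii). It then suffices to show $\epsilon_j = +1$ is forced at every non-coloop $j$. Fix a circuit $C \ni j$. The integer relations supported on $C$ form the rank-one lattice $\mathbb{Z}\, m^C$, because a circuit is a minimal dependency and the columns it indexes therefore have a one-dimensional space of relations. Since $D_\epsilon m^C$ lies in $L$ by (i) and has support exactly $C$ with the same entrywise absolute values as $m^C$, it equals $\pm m^C$; equivalently $\epsilon$ is constant on $C$, say $\epsilon|_C \equiv c$. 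Condition (ii) for $m^C$ then reads $\beta^{(c-1)m^C} = 1$. If $c = -1$ this forces $\beta^{2 m^C} = e^{-i\pi (m^C \cdot b)} = 1$, i.e. $m^C \cdot b$ is an even integer, contradicting the hypothesis \eqref{eq:noteven}. Hence $c = +1$ and $\epsilon_j = +1$.

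Combining, the admissible $\epsilon$ are precisely those equal to $+1$ on all non-coloops and arbitrary on the ${\rm CL}_A$ coloops, so the generic fiber has $2^{{\rm CL}_A}$ points and $\deg \pi_{A,b} = 2^{{\rm CL}_A}$. Substituting into Theorem~\ref{thm:degree} with $[\mathbb{Z}^d : \mathbb{Z}A] = 1$ gives $\deg(\mathcal{L}_{A,b}) = d!\,{\rm vol}(P_A)/2^{{\rm CL}_A}$. I expect the principal obstacle to be the genericity bookkeeping that turns ``equality of monomials for generic $v^*$'' into conditions (i)--(ii) holding simultaneously over all $m \in L$, together with the circuit-lattice identity $\{m \in L : {\rm supp}(m) \subseteq C\} = \mathbb{Z}\,m^C$, which is exactly what converts the sign analysis into the dichotomy $\epsilon|_C \equiv \pm 1$. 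Everything downstream of these two points is formal.
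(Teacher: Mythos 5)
Your proof is correct and follows essentially the same route as the paper: you enumerate the $2^n$ candidate fiber points of $\pi_{A,b}$ by sign flips, use the binomial description of $Y_{A,\beta}\cap(\mathbb{C}^*)^n$ via the kernel lattice, show coloops give $2^{{\rm CL}_A}$ free flips, and use circuit minimality plus condition \eqref{eq:noteven} to force all other signs. The only cosmetic difference is that you phrase the circuit step through the rank-one lattice $\{m \in L : {\rm supp}(m)\subseteq C\}=\mathbb{Z}\,m^C$ to get $\epsilon|_C\equiv\pm1$, whereas the paper derives $\sum_{j\in C\setminus J}m_j^C a_j=0$ and invokes minimality directly; these are the same argument.
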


\begin{proof}
    To investigate the degree of $\pi_{A,b}$, pick a generic point $(x,y) \in {\cal Y}_{A,b} \subseteq \mathbb{C}^n \times (\mathbb{C}^*)^n$ and notice that the only candidates for the points in the fiber $\pi_{A,b}^{-1}(\pi_{A,b}(x,y))$ are the $2^n$ points $(x,y_1^{\pm 1}, \ldots, y_n^{\pm 1})$. For any subset $J \subseteq [n]$, let $\tilde{y} \in (\mathbb{C}^*)^n$ be given by 
    \[ \tilde{y}_j \, = \, \begin{cases}
        y_j & j \notin J \\ 
        y_j^{-1} & j \in J
    \end{cases}.\]
    To test whether $(x,\tilde{y}) \in \pi_{A,b}^{-1}(\pi_{A,b}(x,y))$, we need to check whether $\tilde{y} \in Y_{A,\beta} \cap (\mathbb{C}^*)^n$. 
    
    The variety $Y_{A,\beta} \cap (\mathbb{C}^*)^n$ is given by one binomial equation for each circuit of $A$: 
    \[ Y_{A,\beta} \cap (\mathbb{C}^*)^n \, = \, \{ y \in (\mathbb{C}^*)^n \, : \, y^{m^C} = \beta^{m^C} \, \, \, \text{for all} \, \, \, C \in {\rm Circ}(A)\}.  \]
    If $J \subseteq {\rm Coloops}(A)$, then the coordinates $y_j, j \in J$ do not appear in our binomial equations and $\tilde{y}$ lies on $Y_{A,\beta} \cap (\mathbb{C}^*)^n$. If $J \not \subseteq {\rm Coloops}(A)$ (in particular, $J \neq \emptyset$), pick a circuit $C$ such that $J \cap C \neq \emptyset$. If both $y$ and $\tilde{y}$ lie on $Y_{A,\beta}$, then 
    \begin{equation} \label{eq:contrad} y^{m^C} \cdot \tilde{y}^{m^C} \, = \, \prod_{j \in C \setminus J} y_j^{2 \, m_j^C} \, = \, \beta^{\, 2 \, m^C}.\end{equation}
    The second equality in \eqref{eq:contrad} holds for generic $y \in Y_{A,\beta}$ if and only if, for all $v \in (\mathbb{C}^*)^d$, we have
    \[ \prod_{j \in C \setminus J} (\beta_j v^{a_j})^{2 \, m^C_j} \, = \, \beta^{\, 2 \, m^C}, \quad \text{which implies} \quad v^{\, 2  \sum_{j\in C \setminus J}m^C_j \, a_j} \, = \, \prod_{j \in J \cap C} \beta_j^{\, 2 \, m^C_j}.\]
    In particular, we must have $\sum_{j \in C\setminus J} m_j^C \, a_j = 0$. If $J \cap C \subsetneq C$, then this contradicts the fact that $C$ is a circuit.
    If $J \cap C = C$, then the equality fails if $\beta^{\, 2 \, m^C} \neq 1$. Taking the logarithm on both sides of this inequality, we obtain precisely the condition \eqref{eq:noteven}. We have shown that, under the condition \eqref{eq:noteven} for every circuit $C \in {\rm Circ}(A)$, the points in $\pi_{A,b}^{-1}(\pi_{A,b}(x,y))$ are the points of the form $\tilde{y}$ corresponding to $J \subseteq {\rm Coloops}(A)$. Hence, the cardinality is $2^{{\rm CL}_A}$. 
\end{proof}

\begin{remark}
    When $b = \mathbf{0}$, the condition \eqref{eq:noteven} is never satisfied. In fact, in this case, we have $\deg \pi_{A,b} \geq 2$, as the fiber $\pi_{A,\mathbf{0}}^{-1}(\pi_{A,\mathbf{0}}(x,y))$ contains $(x,y)$ and $(x,y^{-1})$ \cite[Remark 5.4]{bel2024chebyshev}. The second point is obtained as $\tilde{y}$ for $J = [n]$ in the notation of the proof of Theorem \ref{thm:genericbeta}.
\end{remark}

We end the section by applying our degree formula to a family of Chebyshev hypersurfaces which arises when studying elliptopes of cycle graphs \cite{SolusUhlerYoshida2016,sturmfels2010multivariate}. This is relevant in semidefinite completion problems, see for instance \cite{Laurent1997}. Rephrasing \cite[Equation (28)]{sturmfels2010multivariate} in our notation, the $n$-th \emph{cycle polynomial} $\Gamma_n'$ is the defining equation of the Chebyshev hypersurface ${\cal C}_{A_n}$,~with 
\begin{equation} \label{eq:cycleAn} A_n \, = \, \begin{pmatrix}
    1 & 0 & \cdots & 0 & -1 \\ 
    0 & 1 & \cdots & 0 & -1 \\ 
    \vdots & \vdots & \ddots & \vdots & \vdots \\ 
    0 & 0 & \cdots & 1 & -1
\end{pmatrix} \quad \in \, \, \mathbb{Z}^{(n-1) \times n}.\end{equation}
Notice that $\Gamma_n'$ is defined up to scale. Its degree was computed up to $n = 11$ in \cite[Table 1]{sturmfels2010multivariate}. In the paragraph preceding Conjecture 4.9 in \cite{sturmfels2010multivariate}, it is stated that a closed formula for $\deg(\Gamma_n')$ is not known. The next proposition provides such a formula. 

\begin{proposition} \label{prop:degreecycle}
    The degree of the $n$-th cycle polynomial $\Gamma_n'$ is given by 
    \begin{equation} \label{eq:volumecycle} \deg (\Gamma_n') \, = \,
        \frac{n}{2} \binom{n-1}{\lfloor \tfrac{1}{2}(n-1) \rfloor} . \end{equation}
\end{proposition}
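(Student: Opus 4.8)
The plan is to recognize that $\Gamma_n'$ is, up to scale, the defining equation of the hypersurface ${\cal C}_{A_n} = {\cal L}_{A_n, \mathbf{0}} \subset \mathbb{C}^n$, so that $\deg(\Gamma_n') = \deg({\cal C}_{A_n})$, and then to evaluate this degree with the formula of Theorem \ref{thm:degree}. Here $d = n-1$ and $b = \mathbf{0}$, so I must determine the three quantities $[\mathbb{Z}^{n-1} : \mathbb{Z}A_n]$, $\deg \pi_{A_n, \mathbf{0}}$, and the normalized volume $(n-1)!\,{\rm vol}(P_{A_n})$, where $P_{A_n} = {\rm conv}\{\pm e_1, \ldots, \pm e_{n-1}, \pm \mathbf{1}\} \subset \mathbb{R}^{n-1}$ (note that the last column $a_n = -\mathbf{1}$ contributes $\pm \mathbf{1}$).

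Two of these are immediate. Since the columns of $A_n$ include the standard basis vectors $e_1, \ldots, e_{n-1}$, we have $\mathbb{Z}A_n = \mathbb{Z}^{n-1}$ and hence $[\mathbb{Z}^{n-1}:\mathbb{Z}A_n] = 1$. For $\deg \pi_{A_n, \mathbf{0}}$ I would run the fiber analysis from the proof of Theorem \ref{thm:genericbeta} with $\beta = \mathbf{1}$. The matrix $A_n$ has a single circuit $C = [n]$, with circuit vector $m^C = \mathbf{1}$, and therefore no coloops. For a nonempty $J \subsetneq [n]$, flipping the coordinates indexed by $J$ would force $\sum_{j \in [n]\setminus J} m^C_j \, a_j = 0$, contradicting that $[n]$ is a circuit; while $J = [n]$ survives because $\beta^{2 m^C} = 1$ when $\beta = \mathbf{1}$. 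Thus the fiber consists of exactly the two points indexed by $J = \emptyset$ and $J = [n]$, giving $\deg \pi_{A_n, \mathbf{0}} = 2$, consistent with the Remark following Theorem \ref{thm:genericbeta}. By Theorem \ref{thm:degree} this reduces the claim to proving $(n-1)!\,{\rm vol}(P_{A_n}) = n \binom{n-1}{\lfloor (n-1)/2\rfloor}$.

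The crux is this volume computation, and here I would invoke \cite{chen2018counting}. The key geometric observation is that $P_{A_n}$ is unimodularly equivalent to the symmetric edge (adjacency) polytope of the cycle graph $C_n$, realized in $\mathbb{R}^{n-1}$. Concretely, deleting one row of the oriented incidence matrix of $C_n$ yields a matrix $\tilde B \in \mathbb{Z}^{(n-1)\times n}$ whose columns $b_1, \ldots, b_{n-1}$ form a lattice basis of $\mathbb{Z}^{n-1}$ with $b_n = -\sum_{i<n} b_i$. The unimodular map $U \colon e_i \mapsto b_i$ then sends the columns of $A_n$ onto those of $\tilde B$, hence maps $P_{A_n}$ onto the adjacency polytope of $C_n$ while preserving normalized volume. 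The normalized volume of that polytope is exactly the generic equilibrium count of the Kuramoto model on $C_n$ computed in \cite{chen2018counting}, namely $n \binom{n-1}{\lfloor (n-1)/2\rfloor}$.

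Combining the three ingredients through Theorem \ref{thm:degree} gives
\[ \deg(\Gamma_n') \, = \, \frac{(n-1)!\,{\rm vol}(P_{A_n})}{\deg \pi_{A_n, \mathbf{0}}\cdot [\mathbb{Z}^{n-1}:\mathbb{Z}A_n]} \, = \, \frac{1}{2}\, n \binom{n-1}{\lfloor (n-1)/2 \rfloor}, \]
as claimed. I expect the main obstacle to be the volume computation: setting up the unimodular equivalence to the adjacency polytope cleanly and matching our normalization to the exact count stated in \cite{chen2018counting} — in particular checking that no spurious factor of two enters when passing between the hyperplane lattice $\mathbb{Z}^n \cap \{\sum_i x_i = 0\}$ and $\mathbb{Z}^{n-1}$. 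Everything else is a direct application of the degree machinery already developed in this section, and the base case $n=3$ (where $\deg = 3$) can be checked against the cubic surface ${\cal C}_A$ of Example \ref{ex:elliptopeintro}.
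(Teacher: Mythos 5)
Your proposal is correct and follows essentially the same route as the paper: apply Theorem \ref{thm:degree} with $[\mathbb{Z}^{n-1}:\mathbb{Z}A_n]=1$, show $\deg \pi_{A_n,\mathbf{0}}=2$ by checking that only $y$ and $y^{-1}$ among the sign-flipped candidates lie on $Y_{A_n}$ (the paper does this directly with the single binomial $y_1\cdots y_n=1$ rather than via the circuit/coloop machinery of Theorem \ref{thm:genericbeta}, but it is the same computation), and quote \cite[Theorem 14]{chen2018counting} for the normalized volume of $P_{A_n}$. Your extra paragraph justifying the unimodular equivalence between $P_{A_n}$ and the adjacency polytope of $C_n$ is a reasonable addition that the paper leaves implicit.
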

\begin{proof}
    By Theorem \ref{thm:degree}, we have $\deg(\Gamma_n') = (\deg \pi_{A_n,{\bf 0}})^{-1} (n-1)! \, {\rm vol}(P_{A_n})$. Let $y \in Y_{A_n} \cap (\mathbb{C}^*)^n = \{ y \in (\mathbb{C}^*)^n \, : \, y_1\cdots y_{d+1} = 1 \}$ be a generic point on the affine toric hypersurface associated to $A_n$. The degree of $\pi_{A,{\bf 0}}$ is the number of points among $\{(y_1^{\pm 1}, \ldots, y_n^{\pm 1}) \}$ lying on $Y_{A_n}$. That number is two: only $(y_1, \ldots, y_n)$ and $(y_1^{-1}, \ldots, y_n^{-1})$ lie on $Y_{A_n}$. The number $(n-1)! {\rm vol}(P_{A_n})$ was computed in \cite[Theorem 14]{chen2018counting}, and multiplying with $1/2$ gives \eqref{eq:volumecycle}.
\end{proof}

\section{Determinantal equations} \label{sec:3}

Our goal in this section is to derive defining equations for ${\cal L}_{A,b}$ from rank conditions on a matrix with entries in $\mathbb{C}[x]$. Motivated by the equations $2\, x_j = y_j + y_j^{-1}$ appearing in Theorem \ref{thm:dimension}, we work in the following setup. Let $K = \overline{\mathbb{C}(x)}$ be the algebraic closure of the field of rational functions in $x = (x_1, \ldots, x_n)$, and consider the following ideal: 
\[ J \, = \, \langle 2 \, x_1 - y_1 - y_1^{-1}, \, \ldots, \, 2\, x_n - y_n - y_n^{-1}\rangle \, \, \subset \, \, K[y^{\pm}] = K[y_1^{\pm 1}, \ldots, y_n^{\pm 1}]. \]
The affine variety $V(J)$ defined by $J$ in $(K^*)^n$ is zero-dimensional and consists of $2^n$ distinct points, each with multiplicity one. Hence, $J$ is radical and the quotient ${\cal A} = K[y^{\pm 1}]/J$ is a $K$-vector space of dimension $2^n$. Let $f_j = 2\, x_j - y_j - y_j^{-1}$ be the $j$-th generator of $J$. We have 
\begin{equation} \label{eq:tensorproduct}
{\cal A} \, = \, \frac{K[y^{\pm 1}]}{\langle f_1,\dotsc,f_n\rangle}\, \simeq \, \frac{K[y_1^{\pm 1}]}{\langle f_1\rangle}\otimes_K\, \cdots \, \otimes_K\frac{K[y_n^{\pm 1}]}{\langle f_n\rangle} \, . 
\end{equation}
Below we write $[h]$ for the residue class of $h \in K[y^{\pm 1}]$ in ${\cal A}$. Multiplication with an element $g \in K[y^{\pm 1}]$ gives a $K$-linear endomorphism $M_g : {\cal A} \rightarrow {\cal A}$, where $M_g([h]) = [gh]$. Once we fix a $K$-basis for ${\cal A}$, such a map is represented by a $2^n \times 2^n$ matrix with entries in $K$. We shall now describe such matrices for a basis compatible with the tensor product structure \eqref{eq:tensorproduct}.

For a moment, set $n = 1$. Let us fix the $K$-basis $\{[1],[ y_1] \}$ for the algebra ${\cal A} = K[y_1^{\pm 1}]/\langle f_1 \rangle$. We claim that, in this basis, multiplication with $y_1$ is given by the $2 \times 2$ matrix    \[
    M_{y_1} \, = \, \begin{pmatrix}
        0 & -1\\
        1 & 2 \, x_1
    \end{pmatrix}.
    \]
Indeed, the first column reads $[y_1 \cdot 1] = \textcolor{blue}{0} \cdot [1] + \textcolor{blue}{1} \cdot [y_1]$, and the second column reads 
\[ [y_1 \cdot y_1] \, = \, [y_1^2] \, = \, \textcolor{blue}{-1} \cdot [1] + \textcolor{blue}{2 \, x_1} \cdot [y_1] ,\]
where the second equality follows from $[y_1f_1] = 0$. 
Multiplication with a general element $g \in K[y_1^{\pm 1}]$ is given by $M_g = g(M_{y_1})$, where $g(M_{y_1})$ denotes the matrix obtained by substituting $M_{y_1}$ for $y_1$ in the monomial expansion of $g$.

By \eqref{eq:tensorproduct}, a general element $a \in {\cal A}$ is given by a finite sum $a = \sum_k a_1^{(k)}\otimes \dotsc \otimes a_n^{(k)}$, where 
\[ a_j^{(k)}\in {\cal A}_j = K[y_j^{\pm 1}]/\langle f_j\rangle. \]
Multiplication by a single variable, say $y_1$, satisfies
\[
M_{y_1}(a) \, =\,  M_{y_1} \Big (\sum_k a_1^{(k)}\otimes \dotsc\otimes a_n^{(k)} \Big) \, =\, \sum_k M_{y_1}( a_1^{(k)})\otimes a_2^{(k)}\otimes \cdots \otimes a_n^{(k)}.
\]
Hence, fixing the basis $\{ [1],[y_j]\}$ for ${\cal A}_j$ and the corresponding tensor product basis for ${\cal A} = {\cal A}_1 \otimes_K \cdots \otimes_K {\cal A}_n$, our previous observation gives
\[
    M_{y_1} = 
    \begin{pmatrix}
        0 & -1\\
        1 & 2 \, x_1
    \end{pmatrix}
    \otimes {\rm id}_2 \otimes \dotsc \otimes {\rm id}_2, 
\]
where ${\rm id}_2$ is the $2 \times 2$ identity matrix and $\otimes$ is the Kronecker product of matrices.
The matrices $M_{y_2},\dotsc,M_{y_n}$ are found in an analogous way. 
By definition, our matrices $M_{y_1}, \ldots, M_{y_n}$ are pairwise commuting. 
Hence, for any Laurent polynomial $g \in K[y^{\pm 1}]$, the matrix $g(M_{y_1}, \ldots, M_{y_n})$ obtained by substituting $M_{y_j}$ for $y_j$ in the monomial expansion of $g$ is well-defined and represents the map $M_g$.
Since the entries of both $M_{y_j}$ and $M_{y_j}^{-1}$ are polynomials in $x_j$, we have that $M_g \in \mathbb{C}[x]^{2^n \times 2^n}$ for any $g \in \mathbb{C}[y^{\pm 1}] \subset K[y^{\pm 1}]$. 
Here is the main theorem of this section.
\begin{theorem}\label{thm:RankCondition}
    Fix $A\in \mathbb{Z}^{d \times n}$ and $b \in \mathbb{C}^n$. Let $\beta\in(\mathbb{C}^*)^n$ as usual, and $Y_{A,\beta} \subseteq \mathbb{C}^n$ be the corresponding scaled affine toric variety.
    If $g_1,\dotsc,g_r$ generate the ideal of $Y_{A,\beta} \cap (\mathbb{C}^*)^n$ in $\mathbb{C}[y_1^{\pm 1}, \ldots, y_n^{\pm 1}]$, then 
    \begin{equation} \label{eq:rankrep}
    {\cal L}_{A,b} \, = \, \{x\in\C^n : {\rm rank}(M_{g_1}|\dotsc|M_{g_r})<2^n\},
    \end{equation}
    where $M_{g_k}$ is a $2^n \times 2^n$-matrix with entries in $\mathbb{C}[x]$ representing multiplication by $g_k$ in ${\cal A}$.
\end{theorem}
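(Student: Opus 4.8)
The plan is to prove the set equality in \eqref{eq:rankrep} pointwise: for each fixed $x^* \in \C^n$, I will show that ${\rm rank}(M_{g_1}(x^*)|\dotsc|M_{g_r}(x^*)) < 2^n$ if and only if $x^* \in {\cal L}_{A,b}$. The crucial observation is that, although the $M_{g_k}$ were constructed over the function field $K$, their entries lie in $\C[x]$, so they specialize to genuine multiplication operators on the \emph{specialized} algebra ${\cal A}(x^*) = \C[y^{\pm 1}]/J(x^*)$, where $J(x^*) = \langle y_1 + y_1^{-1} - 2x_1^*,\, \dotsc,\, y_n + y_n^{-1} - 2x_n^*\rangle$. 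First I would check that ${\cal A}(x^*)$ is, for \emph{every} $x^* \in \C^n$, a $2^n$-dimensional $\C$-algebra with basis the tensor products of $\{[1],[y_j]\}$, and that $M_{y_j}(x^*)$, hence $M_{g_k}(x^*)$, represents multiplication by $y_j$, resp.\ $g_k$, in it. This only requires that $\{1, y_j\}$ be a $\C$-basis of $\C[y_j]/\langle y_j^2 - 2x_j^* y_j + 1\rangle$ with $y_j$ invertible, which holds for all $x_j^* \in \C$ because the quadratic has nonzero constant term.

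Next I would reinterpret the rank. The horizontally stacked matrix $N(x^*) = (M_{g_1}(x^*)|\dotsc|M_{g_r}(x^*))$ represents the $\C$-linear map ${\cal A}(x^*)^r \to {\cal A}(x^*)$, $(a_1, \dotsc, a_r) \mapsto \sum_k g_k\, a_k$. Its column space is exactly the ideal $I = \langle g_1, \dotsc, g_r\rangle\, {\cal A}(x^*)$, so ${\rm rank}(N(x^*)) = \dim_\C I$, and the condition ${\rm rank}(N(x^*)) < 2^n$ is equivalent to $I \neq {\cal A}(x^*)$, i.e.\ to $I$ being a proper ideal (equivalently, the cokernel ${\cal A}(x^*)/I$, which is the coordinate ring of the scheme-theoretic intersection $V(J(x^*)) \cap Y_{A,\beta}$, is nonzero).

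Finally I would translate properness into geometry. Since ${\cal A}(x^*)$ is Artinian, $I$ is proper iff it lies in a maximal ideal, i.e.\ iff $g_1, \dotsc, g_r$ have a common zero $y^*$ among the points of $V(J(x^*))$. A point $y^*$ lies in $V(J(x^*))$ exactly when $x_j^* = \tfrac12(y_j^* + (y_j^*)^{-1})$ for all $j$ (note $y^* \in (\C^*)^n$ automatically), and $g_1(y^*) = \dotsb = g_r(y^*) = 0$ says precisely that $y^* \in Y_{A,\beta} \cap (\C^*)^n$, since the $g_k$ generate its ideal. By Theorem \ref{thm:dimension}, such a $y^*$ exists iff $x^* \in \pi_{A,b}({\cal Y}_{A,b}) = {\cal L}_{A,b}$, giving the required equivalence and hence the set equality.

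The main point requiring care is the uniform treatment across all $x^*$: when $x_j^* = \pm 1$ two roots coincide and ${\cal A}(x^*)$ becomes non-reduced, so one cannot simply simultaneously diagonalize the $M_{g_k}$ and read off eigenvalues. Phrasing the rank as $\dim_\C I$ and detecting properness through maximal ideals — that is, through the underlying set-theoretic points, which is all Theorem \ref{thm:dimension} refers to — sidesteps this difficulty entirely and makes the degenerate fibers harmless.
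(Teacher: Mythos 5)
Your proof is correct, but it takes a genuinely different route from the paper's. The paper proves \eqref{eq:rankrep} via the eigenvalue--eigenvector theorem (Theorem \ref{thm:EigenvalueEigenvector}): for the inclusion ${\cal L}_{A,b} \subseteq W$ it exhibits the evaluation functional ${\rm ev}_z$ at a common zero $z$ as a left kernel vector of the concatenated matrix, and for the reverse inclusion it replaces $g_1,\dotsc,g_r$ by generic linear combinations $\tilde g_j$ so that singularity of each $M_{\tilde g_j}$ forces, via the eigenvalue theorem, a common zero of all the $g_k$ on $V(J_{x^*})$. You instead identify the column space of $(M_{g_1}|\cdots|M_{g_r})_{|x=x^*}$ with the ideal $I = \langle g_1,\dotsc,g_r\rangle$ inside the specialized $2^n$-dimensional algebra ${\cal A}(x^*)$, so that rank deficiency is literally properness of $I$, which in an Artinian algebra is equivalent to the $g_k$ having a common zero in $V(J_{x^*})$; both arguments then invoke Theorem \ref{thm:dimension} to translate this into $x^* \in {\cal L}_{A,b}$. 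Your version buys a cleaner, symmetric treatment of both inclusions with no genericity argument and no appeal to eigenvectors, and it handles the non-reduced fibers (where some $x_j^* = \pm 1$) uniformly, as you rightly point out; the one step you should make fully explicit is that specialization at $x^*$ commutes with forming $M_g$ for Laurent polynomials $g$, which holds because $\det M_{y_j} = 1$ so that $M_{y_j}^{-1}$ also has entries in $\mathbb{C}[x]$. The paper's eigenvalue-based setup is not wasted effort on its side, though: the same Theorem \ref{thm:EigenvalueEigenvector} is reused immediately afterwards in Proposition \ref{prop:DetRepresentation} to compute the multiplicity $\deg \pi_{A,b}$ with which $\det(M_g)$ vanishes along ${\cal L}_{A,b}$, information that your purely ideal-theoretic argument does not directly provide.
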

\begin{remark}
    It is easier to obtain generators for the ideal of $Y_{A,\beta} \cap (\mathbb{C}^*)^n$ than for the toric ideal of $Y_{A,\beta}$. First, one computes a $\mathbb{Z}$-kernel of $A$. This can be found, for instance, via the Smith normal form of $A$. If this kernel is generated by $m_1, \ldots, m_r \in \mathbb{Z}^n$, with $r = n - {\rm rank}(A)$, then the binomials $g_1, \ldots, g_r$ in Theorem \ref{thm:RankCondition} are given by $y^{m_k} - \beta^{m_k}, k = 1, \ldots, r$ \cite{Telen2025MonomialMaps}. To avoid computing matrix inverses, one may clear denominators by writing $m_k = u_k - w_k$ with $u_k, w_k \in \mathbb{N}^n$ and use $g_k = \beta^{w_k} y^{u_k} - \beta^{u_k} y^{w_k}$ instead.
\end{remark}

Theorem \ref{thm:RankCondition} implies that a set of defining equations for ${\cal L}_{A,b}$ is given by the maximal minors of the $2^n \times (r \, 2^n)$-matrix $(M_{g_1}|\dotsc|M_{g_r})$ obtained by concatenating the matrices $M_{g_j}$ as block columns.
In particular, if $A$ has rank $n-1$, then $Y_{A, b}$ is a hypersurface and its ideal $I_{A,b} = \langle g \rangle$ is principal. Hence, Theorem \ref{thm:RankCondition} provides a determinantal representation for ${\cal L}_{A,b}$.
Before proving Theorem \ref{thm:RankCondition}, we illustrate the statement in our running example.

\begin{example}
    Let $A$ be as in Example \ref{ex:elliptopeintro}. 
    Following the discussion above, we construct matrices representing multiplication with the variables in $K[y_1^{\pm 1}, y_2^{\pm 1}, y_3^{\pm 1}]/\langle f_1,f_2,f_3\rangle$:
    \[
    M_{y_1} = \begin{pmatrix}
        0 & -1\\
        1 & 2 \, x_1
    \end{pmatrix}
    \otimes {\rm id}_2 \otimes {\rm id}_2,\ \
    M_{y_2} = {\rm id}_2\otimes
    \begin{pmatrix}
        0 & -1\\
        1 & 2 \, x_2
    \end{pmatrix}
    \otimes {\rm id}_2,\ \text{ and }\ 
    M_{y_3} = {\rm id}_2 \otimes {\rm id}_2 \otimes
    \begin{pmatrix}
        0 & -1\\
        1 & 2 \, x_3
    \end{pmatrix}.
    \]
    The toric ideal of $Y_{A,\beta}$ is generated by $g= y_1 y_2y_3- \beta_1 \beta_2 \beta_3$. Evaluating this at our multiplication operators, we find
    $M_g = M_{y_1} M_{y_2} M_{y_3} -  \beta_1 \beta_2 \beta_3 \, {\rm id}_{8 \times 8}$, which gives the following~result: 
\[
\setlength\arraycolsep{1.8pt}
M_g \, = \, \begin{pmatrix}-\beta_1 \beta_2 \beta_3&0&0&0&0&0&0&-1\\0&-\beta_1 \beta_2 \beta_3&0&0&0&0&1&2 x_3\\0&0&-\beta_1 \beta_2 \beta_3&0&0&1&0&2 x_2\\0&0&0&-\beta_1 \beta_2 \beta_3&-1&-2 x_3&-2 x_2&-4 x_2 x_3\\0&0&0&1&-\beta_1 \beta_2 \beta_3&0&0&2 x_1\\0&0&-1&-2 x_3&0&-\beta_1 \beta_2 \beta_3&-2 x_1&-4 x_1 x_3\\0&-1&0&-2 x_2&0&-2 x_1&-\beta_1 \beta_2 \beta_3&-4 x_1 x_2\\1&2 x_3&2 x_2&4 x_2 x_3&2 x_1&4 x_1 x_3&4 x_1 x_2&8 x_1 x_2 x_3-\beta_1 \beta_2 \beta_3\end{pmatrix}\]
Setting $b = \mathbf{0}$ (i.e., $\beta = (1,1,1)$), the determinant evaluates to
\[
(\det M_g)_{|\beta = (1,1,1)} \, = \, 16 \, (x_1^2 - 2x_1x_2x_3 + x_2^2 + x_3^2 - 1)^2.
\]
This is the square of the defining equation of Cayley's cubic surface $\mathcal{C}_A = {\cal L}_{A,\mathbf{0}}$.
This computation shows that the equality \eqref{eq:rankrep} holds only set-theoretically; the maximal minors of $(M_{g_1} | \cdots | M_{g_r})$ do not necessarily generate a radical ideal. 
The equation \eqref{eq:SA} for $\mathcal{S}_A= {\cal L}_{A,\mathbf{1}}$ is obtained by evaluating $\det M_g$ at $b = \mathbf{1}$ or $\beta = (-i,-i,-i)$. In this case, and for generic $\beta$, $\det M_g$ is an irreducible polynomial of degree six. The case $\beta = (1,1,1)$ is exceptional. 
\end{example}

To prove Theorem \ref{thm:RankCondition}, we first state a classical theorem, which describes the eigenvalues and left eigenvectors of a multiplication map. We refer to \cite[Théorème 4.23]{MourrainElkadi2007ResolutionSystemesPolynomiaux} for details.
\begin{theorem}[Eigenvalue, eigenvector theorem]\label{thm:EigenvalueEigenvector}
    Let $\cal A$ be the coordinate ring of a zero-dimensional scheme with support $V=\{z_1,\dotsc,z_{\delta}\}$. 
    The eigenvalues of the multiplication map $M_g \colon \cal A\rightarrow \cal A$ are $g(z_1),\dotsc,g(z_{\delta})$, and the multiplicity of the eigenvalue $g(z_j)$ equals the multiplicity of $z_j$. Moreover, for each $j = 1, \ldots, \delta$, the evaluation map ${\rm ev}_{z_j}: {\cal A} \rightarrow \mathbb{C}$ given by ${\rm ev}_{z_j}([h]) = h(z_j)$ is a left eigenvector corresponding to the eigenvalue $g(z_j)$.
\end{theorem}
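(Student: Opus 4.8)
The plan is to reduce the statement to the structure theory of finite-dimensional algebras over an algebraically closed field. Since $\mathcal{A}$ is the coordinate ring of a zero-dimensional scheme, it is a finite-dimensional $K$-algebra with $K$ algebraically closed, so by the structure theorem for Artinian rings it splits as a product of local factors
\[ \mathcal{A} \;\simeq\; \prod_{j=1}^{\delta} \mathcal{A}_j, \]
one for each point $z_j$ of the support, where $\mathcal{A}_j$ is a local Artinian $K$-algebra with maximal ideal $\mathfrak{m}_j$ and residue field $\mathcal{A}_j/\mathfrak{m}_j \simeq K$. By definition, the multiplicity of $z_j$ is $\mult(z_j) = \dim_K \mathcal{A}_j$, and these dimensions sum to $\dim_K \mathcal{A}$.

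First I would analyze $M_g$ on each factor. Multiplication by $g$ respects the product decomposition, so $M_g = \bigoplus_{j} M_g^{(j)}$, where $M_g^{(j)}$ is multiplication by the image of $g$ in $\mathcal{A}_j$. Because $\mathcal{A}_j$ is local with residue field $K$, the element $g - g(z_j)$ lies in $\mathfrak{m}_j$ and is therefore nilpotent; hence $M_g^{(j)} = g(z_j)\,{\rm id} + N_j$ with $N_j$ nilpotent. Consequently each block is triangularizable with the single eigenvalue $g(z_j)$, occurring $\dim_K \mathcal{A}_j = \mult(z_j)$ times. Collecting the blocks, the characteristic polynomial of $M_g$ factors as $\prod_{j=1}^{\delta}\bigl(t - g(z_j)\bigr)^{\mult(z_j)}$, which yields both the claimed eigenvalues and their multiplicities (and correctly sums contributions when several $z_j$ share a value $g(z_j)$).

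The left-eigenvector claim I would then verify by a direct computation requiring no decomposition. The functional ${\rm ev}_{z_j}$ descends to $\mathcal{A}$ because the defining ideal of the scheme vanishes at $z_j$. For any $[h] \in \mathcal{A}$,
\[ {\rm ev}_{z_j}\bigl(M_g([h])\bigr) = {\rm ev}_{z_j}([gh]) = (gh)(z_j) = g(z_j)\,h(z_j) = g(z_j)\,{\rm ev}_{z_j}([h]), \]
so ${\rm ev}_{z_j} \circ M_g = g(z_j)\,{\rm ev}_{z_j}$; that is, ${\rm ev}_{z_j}$ is a left eigenvector of $M_g$ for the eigenvalue $g(z_j)$.

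The main obstacle is the first step: justifying the product decomposition and identifying $\dim_K \mathcal{A}_j$ with the multiplicity of $z_j$ in the possibly non-reduced case, together with the fact that each residue field equals $K$, so that $g(z_j)$ is a scalar and $g - g(z_j)$ is nilpotent in $\mathcal{A}_j$. This is exactly where the hypotheses that the scheme is zero-dimensional and that $K$ is algebraically closed enter. Everything afterward — triangularizing each scalar-plus-nilpotent block and the one-line eigenvector computation — is routine.
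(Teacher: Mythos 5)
Your proof is correct, but note that the paper does not actually prove this statement: it is quoted as a classical result, with the proof deferred to \cite[Théorème 4.23]{MourrainElkadi2007ResolutionSystemesPolynomiaux}. Your argument is self-contained and is essentially the standard proof of this result (often attributed to Stickelberger): decompose the Artinian algebra $\mathcal{A} \simeq \prod_j \mathcal{A}_j$ into local factors indexed by the support, observe that the image of $g - g(z_j)$ lies in the maximal ideal $\mathfrak{m}_j$ and is therefore nilpotent, so each block of $M_g$ is $g(z_j)\,\mathrm{id}$ plus a nilpotent operator with characteristic polynomial $\bigl(t - g(z_j)\bigr)^{\dim_K \mathcal{A}_j}$, and then check the left-eigenvector claim by the one-line computation $\mathrm{ev}_{z_j}([gh]) = g(z_j)\,\mathrm{ev}_{z_j}([h])$. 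You also correctly flag the two places where the hypotheses enter: algebraic closedness of the ground field (so that maximal ideals of $\mathcal{A}$ correspond to the support points and every residue field equals the ground field, making $g(z_j)$ a scalar), and the bookkeeping of algebraic multiplicities when distinct points satisfy $g(z_i) = g(z_j)$. Two tiny points worth making explicit if this were written out in full: $\mathrm{ev}_{z_j}$ is well defined on $\mathcal{A}$ precisely because $z_j$ lies in the vanishing locus of the defining ideal, and it is nonzero (it sends $[1] \mapsto 1$), so it genuinely is an eigenvector.
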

Once a basis for the vector space ${\cal A}$ is fixed, the evaluation map ${\rm ev}_{z_j}: {\cal A} \rightarrow \mathbb{C}$ is represented by a row vector of length $\dim {\cal A}$. The eigenvalue relation is ${\rm ev}_{z_j} \, M_g \, = \, g(z_j) \, {\rm ev}_{z_j}$. 
\begin{proof}[Proof of Theorem \ref{thm:RankCondition}]
    Let $W$ be the righthand side in \eqref{eq:rankrep}. For $x^* \in \mathbb{C}^n$, let us write $J_{x^*} \subseteq \mathbb{C}[y^{\pm 1}]$ for the ideal generated by $(f_1)_{x = x^*}, \ldots, (f_n)_{|x = x^*}$. The variety $V(J_{x^*}) \subset (\mathbb{C}^*)^n$ consists of at most $2^n$ points. Recall from Theorem \ref{thm:dimension} that ${\cal L}_{A,b} = \pi_{A,b}({\cal Y}_{A,b})$ and 
    \begin{align*}
        x^* \in\pi_{A,b}(\mathcal{Y}_{A,b}) & \,  \Longleftrightarrow \, \text{there is } z\in Y_{A,\beta} \cap (\mathbb{C}^*)^n \text{ such that } (f_j)_{x = x^*, y = z} =0 \text{ for }j=1,\dotsc,n\\
        & \, \Longleftrightarrow \, \text{there is } z \in V(J_{x^*}) \text{ such that } z \in Y_{A,\beta} \cap (\mathbb{C}^*)^n\\
        & \, \Longleftrightarrow \,  \text{there is } z \in V(J_{x^*}) \text{ such that } g_1(z)=\cdots=g_r(z)=0.
    \end{align*}
    The ideal $J=\langle f_1,\dotsc,f_n\rangle \subset K[y^{\pm 1}]$ is zero-dimensional and radical.
    By \Cref{thm:EigenvalueEigenvector}, the eigenvalues of $M_{g_j}$ are given by $\{g_j(z): z\in V(J)\} \subset \overline{K}$.
    If there exists $z\in V(J_{x^*})$ such that $g_1(z)=\dotsc=g_r(z)=0$, then ${\rm ev}_z: \mathbb{C}[y^{\pm 1}]/J_{x^*} \rightarrow \mathbb{C}[y^{\pm 1}]/J_{x^*}, [h] \mapsto h(z)$ is a common left eigenvector of $(M_{g_1})_{|x = x^*},\dotsc,(M_{g_r})_{|x = x^*}$ with eigenvalue zero. Hence, ${\rm ev}_z$ is represented by a row vector of length $2^n$, which is a left kernel vector of the concatenated matrix $(M_{g_1} | \cdots | M_{g_r})_{|x = x^*}$. This shows the inclusion $\pi_{A,b}({\cal Y}_{A,b}) \subseteq W$, and hence ${\cal L}_{A,b} \subseteq W$.

    For the reverse inclusion, suppose that ${\rm rank}(M_{g_1}|\dotsc|M_{g_r})_{|x = x^*} <2^n$ for some~$x^* \in \mathbb{C}^n$. Then there exists a left null vector $v^t \in \mathbb{C}^{2^n}$. 
    Applying column operations to the matrix $(M_{g_1} | \cdots | M_{g_r})_{|x = x^*}$, we may replace each $g_j$ by a random $\mathbb{C}$-linear combination $\tilde{g}_j$ of $g_1, \ldots, g_r$. This has the effect that for each $z \in V(J_{x^*})$ and each $j = 1, \ldots, r$, we have $\tilde{g}_j(z) = 0$ if and only if $g_1(z) = \cdots = g_r(z) = 0$, i.e., $\tilde{g}_j(z) = 0 \Leftrightarrow z \in Y_{A,\beta} \cap (\mathbb{C}^*)^n$. Since $v^t \cdot (M_{\tilde{g}_1} | \cdots | M_{\tilde{g}_r})_{|x = x^*} = 0$, each of the matrices $M_{\tilde{g}_j}$ has a zero eigenvalue. Theorem \ref{thm:EigenvalueEigenvector} implies that for each $j$, $\tilde{g}_j(z^{(j)}) = 0$ for some $z^{(j)} \in V(J_{x^*})$. We conclude that $V(J_{x^*}) \cap Y_{A,\beta} \cap (\mathbb{C}^*)^n \neq \emptyset$ and $x^* \in \pi_{A,b}({\cal Y}_{A,b})$. 
\end{proof}

\begin{proposition}\label{prop:DetRepresentation}
    In the situation of Theorem \ref{thm:RankCondition}, if $A$ has rank $n-1$ and $I_{A,\beta} = \langle g \rangle$, then
    \begin{equation} \label{eq:detrep}
    {\cal L}_{A,b} =\{x\in\C^n : \det(M_{g})=0\}.
    \end{equation}
    Moreover, if the prime ideal of ${\cal L}_{A,b}$ is $\langle f \rangle \subset \mathbb{C}[x]$, then $\det(M_g) = c \,  f^{\deg \pi_{A,b}}$ for some~$c \in \mathbb{C}^*$. 
\end{proposition}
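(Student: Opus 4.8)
The first statement is immediate. Since $A$ has rank $n-1$, the kernel of $A$ has rank $r = n - {\rm rank}(A) = 1$, so a single generator $g$ suffices and the matrix $(M_{g_1}|\cdots|M_{g_r})$ of Theorem \ref{thm:RankCondition} is the single square matrix $M_g$. A square matrix drops rank precisely when its determinant vanishes, so the rank condition \eqref{eq:rankrep} becomes the determinantal condition \eqref{eq:detrep}. For the second statement, note first that $\det M_g \not\equiv 0$: otherwise ${\rm rank}(M_g) < 2^n$ would hold identically and \eqref{eq:detrep} would force ${\cal L}_{A,b} = \mathbb{C}^n$, contradicting $\dim {\cal L}_{A,b} = {\rm rank}(A) = n-1$ from Lemma \ref{lem:noclosure}. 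By \eqref{eq:detrep} we have $V(\det M_g) = {\cal L}_{A,b} = V(f)$ with $f$ irreducible, so the Nullstellensatz together with unique factorization in $\mathbb{C}[x]$ gives $\det M_g = c\, f^m$ for some $c \in \mathbb{C}^*$ and some integer $m \geq 1$. Everything then reduces to identifying the multiplicity $m$, i.e.\ the order of vanishing of $\det M_g$ along the irreducible hypersurface ${\cal L}_{A,b}$, and showing it equals $\deg \pi_{A,b}$.

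The plan is to compute this order via the eigenvalue theorem. Over $K = \overline{\mathbb{C}(x)}$ each $f_j = 2x_j - y_j - y_j^{-1}$ splits, with roots $\lambda_j, \lambda_j^{-1}$ satisfying $\lambda_j + \lambda_j^{-1} = 2x_j$, so $V(J)$ consists of the $2^n$ points $z_\epsilon = (\lambda_1^{\epsilon_1}, \ldots, \lambda_n^{\epsilon_n})$ indexed by $\epsilon \in \{\pm 1\}^n$. By Theorem \ref{thm:EigenvalueEigenvector} the eigenvalues of $M_g$ are the values $g(z_\epsilon)$, whence the identity in $\mathbb{C}[x]$
\[ \det M_g \, = \, \prod_{\epsilon \in \{\pm 1\}^n} g(z_\epsilon). \]
Equivalently, $\det M_g$ is the norm of $g$ for the degree-$2^n$ finite flat cover $\rho \colon (\mathbb{C}^*)^n \to \mathbb{C}^n$, $y \mapsto (\tfrac12(y_1 + y_1^{-1}), \ldots, \tfrac12(y_n + y_n^{-1}))$, because $M_g$ is exactly multiplication by $g$ on the free rank-$2^n$ module $\mathbb{C}[y^{\pm}]$ over $\mathbb{C}[x]$ with the tensor-product basis built from the $\{[1],[y_j]\}$. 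Under the isomorphism ${\cal Y}_{A,b} \simeq Y_{A,\beta} \cap (\mathbb{C}^*)^n =: \tilde Y$ of Theorem \ref{thm:dimension}, the restriction $\rho|_{\tilde Y}$ is precisely the map $\pi_{A,b}$.

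It then remains to compute the order of vanishing of $\prod_\epsilon g(z_\epsilon)$ along ${\cal L}_{A,b}$. Fix a general smooth point $p \in {\cal L}_{A,b}$ and a general line through $p$ meeting ${\cal L}_{A,b}$ transversally, parametrized by $s$ with $p$ at $s = 0$ and chosen to avoid the branch locus of $\rho$; near $s = 0$ the branches $z_\epsilon(s)$ are then single-valued and analytic. For generic $p$, exactly $\deg \pi_{A,b}$ of the points $z_\epsilon(0)$ lie on $\tilde Y$, namely those forming the fiber $\pi_{A,b}^{-1}(p)$; the corresponding factors $g(z_\epsilon(s))$ vanish at $s = 0$, while the remaining factors are nonzero there. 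The main point, and the main obstacle, is that each vanishing factor vanishes to order exactly $1$. Here I would use that $\pi_{A,b}$, being a dominant finite morphism of integral varieties over a field of characteristic zero, is generically étale: over a dense open subset of ${\cal L}_{A,b}$ the cover $\rho$ is a local analytic isomorphism at each point of the fiber, carrying the reduced hypersurface $\tilde Y = \{g = 0\}$ isomorphically onto ${\cal L}_{A,b}$. Since $g$ vanishes to order $1$ along $\tilde Y$ (the ideal $I_{A,b} = \langle g \rangle$ is prime, hence reduced) and the line is transversal, each such factor contributes a simple zero in $s$, and the non-vanishing factors remain units. Summing the orders gives that $\det M_g$ vanishes to order $\deg \pi_{A,b}$ along ${\cal L}_{A,b}$, so $m = \deg \pi_{A,b}$. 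Conceptually this is the identity ${\rm div}(\det M_g) = \rho_\ast\, {\rm div}(g) = \deg(\pi_{A,b}) \cdot [{\cal L}_{A,b}]$ for the norm along a finite flat cover; the only delicate ingredient is the generic unramifiedness of $\pi_{A,b}$, which is what upgrades the set-theoretic fiber count to the correct multiplicity.
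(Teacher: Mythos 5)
Your proof is correct and follows essentially the same route as the paper: both express $\det M_g$ as the product of the eigenvalues $g(z)$ over the $2^n$ points of $V(J)$ via Theorem \ref{thm:EigenvalueEigenvector}, restrict to a generic line through a generic point of ${\cal L}_{A,b}$, observe that exactly $\deg \pi_{A,b}$ of the factors vanish there, and conclude by showing each vanishes to first order. The only divergence is in that last step: the paper computes the tangent vector of the lifted path $z(t)$ explicitly and invokes genericity of the direction $x_0$ to get transversality with $Y_{A,\beta}$, whereas you obtain the same transversality more conceptually from the generic \'etaleness of the finite cover $\pi_{A,b}$ in characteristic zero.
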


\begin{proof}
Equation \eqref{eq:detrep} is an immediate consequence of Theorem \ref{thm:RankCondition}. To compute the order of vanishing of $\det(M_g)$ along ${\cal L}_{A,b}$, we introduce a small parameter $t$ and evaluate $M_g(x)$ at the point $x^* + t \cdot x_0$ for a generic point $x^* \in {\cal L}_{A,b}$ and a generic point $x_0 \in \mathbb{C}^n$. By \eqref{eq:detrep} we~have
\begin{equation} \label{eq:aux} \det M_g(x^* + t \cdot x_0) \, = \,  c \, f(x^* + t \cdot x_0)^k  \, = \, c_1 \, t^k + O(t^{k+1})
\end{equation}
for some positive $k$ and $c_1 \in \mathbb{C}^*$. 
By Theorem \ref{thm:EigenvalueEigenvector}, the eigenvalues of $M_g(x^* + t \cdot x_0)$ are the values $g(z(t))$ for $z(t) \in V(J_{x^* + t \cdot x_0})$. Therefore, we have $\det  M_g(x^* + t \cdot x_0) = \prod_{z(t) \in V(J_{x^* + t \cdot x_0})} g(z(t))$. By genericity of $x^* \in {\cal L}_{A,b}$, there are $\deg \pi_{A,b}$ eigenvalues for which $g(z(0)) = 0$. Hence, to conclude that $\det  M_g(x^* + t \cdot x_0) = c_1 \, t^{\deg \pi_{A,b}} + O(t^{\deg \pi_{A,b} + 1})$, it remains to show that for each eigenvalue with $g(z(0)) = 0$, we have $g(z(t)) = \tilde{c}_1 t + O(t^2)$ for some $\tilde{c}_1 \neq 0$. 
The coordinates of $z(t)$ satisfy $1-2(x_j^*+tx_{0j})z_j(t)+z_j(t)^2 = 0$. Solving this explicitly yields $z_j(t)=z_j(0) + t \, x_{0j}(1 +x_j^*((x_j^*)^2-1)^{-1/2})+O(t^2)$.  By genericity of $x_0$, the curve parametrized by $t \mapsto z(t)$ intersects $Y_{A,\beta}$ transversally at $t = 0$. This implies that $g(z(t))$ has vanishing order one at $t = 0$. We conclude that $k = \deg \pi_{A,b}$ in \eqref{eq:aux}, as desired.
\end{proof}
\section{Kuramoto oscillators} \label{sec:4}

The Kuramoto model is a system of ordinary differential equations, widely used to describe systems of coupled phase oscillators \cite{kuramoto1975self}. In this section, we explain how Lissajous varieties of type ${\cal S}_A$ show up in the study of its steady states. 
Let $G=(V,E)$ be a graph representing the coupling of a system of $m$ oscillators, where $m=|V|$. Let $n = |E|$ be the number of edges and, for $k = 1, \ldots, m$, let $V_k$ be the set of vertices $v\in V$ adjacent to $v_k$. The \emph{Kuramoto model} is a system of $m$ ordinary differential equations in $m$ unknown functions $\theta_k\colon{\mathbb{R}}\rightarrow\mathbb{R}$:
\begin{equation}\label{eq:KuramotoModel}
\dot{\theta}_k \, = \,  \omega_k + \sum_{j\in V_k} K_{kj} \, \sin{(\theta_j - \theta_k)}, \quad k \, = \, 1, \ldots, m.
\end{equation}
The function $\theta_k(t)$ is the angle at vertex $k$ at time $t$, $\dot{\theta}_k$ is its derivative, $\omega_k \in \mathbb{R}$ is the natural frequency of the $k$-th oscillator, and $K_{kj} \in \mathbb{R}_+$ is the coupling strength of the edge $(k,j) \in E$.

\begin{example}\label{ex:KuramotoSysC3}
    Let $G=C_3$ be the triangle graph. Here $m=n=3$, and \Cref{eq:KuramotoModel} reads:
    \[
    \begin{aligned}
        \dot{\theta}_1 \, = \,  K_{12}\sin{(\theta_2 - \theta_1)} +K_{13}\sin{(\theta_3 - \theta_1)} + \omega_1, \\
        \dot{\theta}_2 \, = \,  K_{12}\sin{(\theta_1 - \theta_2)} +K_{23}\sin{(\theta_3 - \theta_2)} + \omega_2, \\
        \dot{\theta}_3 \, = \,  K_{13}\sin{(\theta_1 - \theta_3)} +K_{23}\sin{(\theta_2 - \theta_3)} + \omega_3.
    \end{aligned}
    \]
    The spring network associated with this graph is shown in \Cref{fig:springnetworkC3}. 
    This is a mechanical illustration of the Kuramoto model \cite{dorfler2014synchronization}. The vertices of the graph are constrained to lie on a circle and are connected by spring-like edges. 
    The Kuramoto model describes the angular velocity $\dot{\theta}_k$ of the particle at vertex $k$ as it moves around the circle. One sees from the equations that, for an equilibrium point to exist, we must have $\omega_1 + \omega_2 + \omega_3 = 0$.
    \begin{figure}[ht]
    \centering
    \noindent\makebox[\textwidth]{
    \begin{tikzpicture}[scale=1.7,
    spring/.style={decorate, decoration={aspect=0.3, segment length=3pt, amplitude=1.5pt, coil}}]

    \draw[blue, dashed] (0,0) circle (1);

     \foreach \i/\name in {225/1, 345/3, 105/2} {
        \node[circle, fill=black, inner sep=2pt, label={\i:\name}] (\name) at (\i:1) {};
    }

    \draw[spring, thick] (1) -- (3) node[midway, below, xshift=5pt] {$K_{13}$};
    \draw[spring, thick] (2) -- (3) node[midway, above right, xshift=-5pt] {$K_{23}$};
    \draw[spring, thick] (2) -- (1) node[midway, xshift=-12pt, yshift=5pt] {$K_{12}$};

    \draw[blue, thin, ->] (0,0) -- (1,0);
    \draw[blue, thin, ->] (0,0) -- (2);
    \draw[->, thin] (8:0.2) arc[start angle=-2, end angle=105, radius=0.2];
        \node at (65:0.35) {\scriptsize $\theta_2$};
        
    \end{tikzpicture}
    }
    \caption{The spring network of the triangle graph $C_3$.}
    \label{fig:springnetworkC3}
    \end{figure}
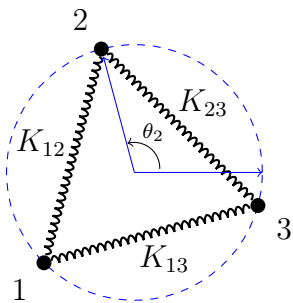
\end{example}

We are interested in the steady states of \Cref{eq:KuramotoModel}. That is, we want to solve the trigonometric equations $\dot{\theta}_k = 0$. For this we rephrase these equations as algebraic equations and use tools from computational algebraic geometry. This is much in the spirit of \cite{chen2018counting,harrington2023kuramoto,mehta2015algebraic}.

The approach taken in \cite{chen2018counting,harrington2023kuramoto,mehta2015algebraic} and references therein is to substitute $x_k = \sin(\theta_k)$ and $y_j = \cos(\theta_j)$. The formula 
$
\sin(\theta_j - \theta_k) = \sin(\theta_j)\cos(\theta_k) - \cos(\theta_j)\sin(\theta_k)
$
turns \eqref{eq:KuramotoModel} into
\begin{equation} \label{eq:bilinear}
f_k \, = \,  \omega_k + \sum_{v_j\in V_k} K_{kj} \, (x_jy_k - x_ky_j).
\end{equation}
Let $I_{\theta} = \langle x_1^2+y_1^2-1,\dotsc,x_m^2+y_m^2-1 \rangle \subset \mathbb{C}[x,y]$ and $I_G=\langle f_1,\dotsc,f_m \rangle \subset \C[x,y]$. In \cite{harrington2023kuramoto}, the \emph{Kuramoto ideal} is $I_K = I_{\theta} + I_G$ and the \emph{Kuramoto variety} is its vanishing set $V(I_K) \subseteq \mathbb{C}^{2m}$.

In this section we explore a different algebraic perspective. 
We assume that $G$ is a simple and connected graph and identify $V = [m]$.
Let $A_G\in \mathbb{Z}^{m\times n}$ be the incidence matrix of $G$, defined as follows. 
If $k \to j$ is the $l$-th edge of $G$ with respect to an arbitrary fixed orientation and an arbitrary fixed ordering of the edges, then the $l$-th column of $A_G$ is $a_l = (e_k-e_j)^T$, where $e_k$ is the $k$-th standard basis vector of $\mathbb{R}^n$.
Since $G$ is connected, the rank of $A_G$ is $m-1$.
The Lissajous variety ${\cal L}_{A_G,\mathbf{1}}$ depends only on the row span of $A_G$. Therefore, unless otherwise specified, we let $A(G)$ be the matrix obtained by removing the last row of the incidence matrix $A_G$, and we set $d=m-1$. We refer to $A(G)$ as the \emph{reduced incidence matrix} of $G$. Below, unless specified otherwise, we fix $G$ and write $A = A(G) \in \mathbb{Z}^{d \times n}$ for short. In particular, unless specified otherwise, ${\cal S}_A = {\cal L}_{A(G),{\bf 1}} = {\cal L}_{A_G,{\bf 1}}$. We write $\omega \in \mathbb{R}^d$ for the vector of natural frequencies after dropping $\omega_m$. For simplicity, we shall assume in what follows that all constants $K_{ij}$ are equal to $K \in \mathbb{R}_+$. All statements are easily generalized to arbitrary $K_{ij}$.

With this setup, studying the steady states of \Cref{eq:KuramotoModel} amounts to studying the intersection of the Lissajous variety ${\cal S}_A$ with an affine linear space. Indeed, after substituting $x_l = \sin(\theta_k-\theta_j)$ in \eqref{eq:KuramotoModel}, the resulting equations are affine-linear. We obtain
\begin{equation}\label{eq:LissajousKuramotoEqs}
    x\in {\cal S}_A \quad \text{and} \quad Ax=\omega/K.
\end{equation}
A steady state is recovered from $x$ satisfying \eqref{eq:LissajousKuramotoEqs} by computing the fiber $\phi_{A,\bf 1}^{-1}(x)$. A different approach is to set $v_j = e^{i \theta_j}$ and solve the following nonlinear equations in $(v_1, \ldots, v_d)$: 
\begin{equation} \label{eq:Kuramoto_v_equations} \tfrac{\omega_{j}}{K} + \tfrac{a_{j1}}{2i}( v^{a_1} -  v^{-a_1}) + \cdots + \tfrac{a_{jn}}{2i}( v^{a_n} - v^{-a_n}) \, = \, 0, \, \, \, j = 1, \ldots, d .\end{equation}
The relation between these two approaches was exploited in the proof of Theorem \ref{thm:degree}, where the solution sets were denoted by $S_x$ and $S_v$ respectively, and we have $\psi_{A,\mathbf{1}}(S_v) = S_x$.

Recall that the degree of ${\cal S}_A$ is given by the number of intersection points with a generic affine space of complementary dimension. 
The affine linear space $Ax=\omega/K$ has codimension $d$, which matches the dimension of ${\cal S}_A$. However, it is not generic, as both the variety and the affine space depend on the matrix $A$. 
On the other hand, if $Ax=\omega/K$ and ${\cal S}_A$ intersect in a finite number of isolated solutions, then this number is bounded above by the degree. The volume in Theorem \ref{thm:degree} appeared as a bound on the number of isolated equilibria in \cite{chen2018counting}.

\begin{example}\label{ex:6solutionsC3}
    Let $G=C_3$ be the triangle graph, as in Example \ref{ex:KuramotoSysC3}. 
    For cycle graphs, we choose the following edge ordering and orientation: $1 \rightarrow 2, \, 2 \rightarrow 3, \, \ldots, \,  n \rightarrow 1$. 
    In the case $n=3$, the reduced incidence matrix $A$ with respect to this ordering coincides with $A$ from Example \ref{ex:elliptopeintro}, hence yielding the same Lissajous variety ${\cal S}_A$. 
    Since ${\rm rank}(A)$ is $m-1$, one of the equations in \eqref{eq:KuramotoModel} can be dropped. For feasibility, we must have $\sum_{l=1}^m \omega_l = 0$. We set the coupling strength $K$ to $1$, and choose $\omega = (\frac{1}{10},\frac{1}{5})$. We also fix $\theta_m=0$ and use $\theta_1, \ldots, \theta_{m-1}= \theta_d$ as coordinates on ${\rm Row}(A)$. With these choices, \Cref{eq:LissajousKuramotoEqs} reads
    \[ x^4 + 4x^2y^2z^2 - 2x^2 y^2 - 2x^2 z^2 + y^4 - 2 y^2z^2 + z^4 \, = \, 
         10 x - 10 z -1 \, = \, 
         - 5 x + 5 y - 1 \, = \, 0.
    \]
    This system has six distinct real solutions, attaining the degree of ${\cal S}_A$. The $(x,y,z)$-coordinates of these solutions determine the sines of the steady state angles: $y = \sin(\theta_2), \,  z = - \sin(\theta_1)$. The cosines can be found by substituting these values appropriately in $f_k = 0$, where $f_k$ is as in \eqref{eq:bilinear}, and solve the resulting system of linear equations. Alternatively, one solves two Laurent polynomial equations in two unknowns given by \eqref{eq:Kuramoto_v_equations}, and finds $\theta_j = -i \log(v_j)$.
\end{example}

\begin{remark} \label{rem:cyclegraph}
    Let $G$ be the cycle graph $C_n$. With the choices of Example \ref{ex:6solutionsC3}, its (reduced) incidence matrix has the same row span as the matrix $A_n$ from \eqref{eq:cycleAn}. The cycle polynomial $\Gamma_n'$ is the defining equation of the Lissajous variety ${\cal C}_{A_n} = {\cal L}_{A_n, {\bf 0}}$. 
\end{remark}

We interpret the equations \eqref{eq:LissajousKuramotoEqs} from an optimization perspective. For this discussion, the matrix $A \in \mathbb{Z}^{d \times n}$ has rank $d$, and it does not necessarily come from a graph. We define a submanifold ${\cal S}_A^+ \subset {\cal S}_A$ as follows. Consider the $d$-dimensional convex polytope $P = {\rm Row}(A) \cap [-\tfrac{\pi}{2}, \tfrac{\pi}{2}]^n$. obtained by intersecting a hypercube with the row span of $A$. We set 
\begin{equation} \label{eq:SA+} {\cal S}_A^+ \, = \, \sin({\rm int}(P)) \, \subset \, {\cal S}_A. \end{equation}
That is, ${\cal S}_A^+$ is the image under $y \mapsto (\sin(y_1), \ldots, \sin(y_n))$ of the interior of $P \subset {\rm Row}(A)$. That map is an isomorphism of manifolds ${\rm int}(P) \simeq {\cal S}_A^+$ with inverse given by the coordinate-wise arcsine function ${\rm arcsin}\colon (-1,1)^n \rightarrow (-\tfrac{\pi}{2}, \tfrac{\pi}{2})^n, \,{\rm arcsin}(x_1, \ldots, x_n) = ({\rm arcsin}(x_1), \ldots, {\rm arcsin}(x_n))$.

\begin{theorem}\label{thm:convexoptimizationKuramoto}
    Let $A \in \mathbb{Z}^{d \times n}$ be of rank $d$. We have $x^* \in {\cal S}_A^+ \cap \{ A K x = \omega \}$ if and only if $x^*$ is the unique minimizer of the following convex optimization problem: 
    \begin{equation} \label{eq:optprob} {\rm minimize} \, \sum_{j = 1}^n \Big (x_j \, {\rm arcsin}(x_j) + \sqrt{1-x_j^2} \Big), \quad {\rm subject~to} \quad Ax = \omega/K \, \,  \text{and} \, \, x \in (-1,1)^n . \end{equation}
\end{theorem}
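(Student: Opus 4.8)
The plan is to read \eqref{eq:optprob} as a strictly convex program whose first-order (Lagrange) optimality condition is exactly the requirement that the minimizer lie in ${\cal S}_A^+$. Write the objective as $F(x) = \sum_{j=1}^n f(x_j)$ with $f(t) = t\,{\rm arcsin}(t) + \sqrt{1-t^2}$ on $(-1,1)$. The computational heart of the argument is the antiderivative identity $f'(t) = {\rm arcsin}(t)$, obtained because the two contributions $\pm\, t/\sqrt{1-t^2}$ cancel upon differentiating. Hence $\nabla F(x) = ({\rm arcsin}(x_1), \ldots, {\rm arcsin}(x_n))$, while $f''(t) = (1-t^2)^{-1/2} > 0$ shows that the Hessian of $F$ is diagonal and positive definite on $(-1,1)^n$. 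Thus $F$ is strictly convex, and so is its restriction to any affine subspace; in particular \eqref{eq:optprob} admits at most one minimizer.

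Next I would set up the optimality condition on the feasible region $R = \{x : Ax = \omega/K\} \cap (-1,1)^n$, the intersection of an affine space with an open box. Because $R$ is open inside that affine space, the only active constraints at a feasible point are the equalities, and the feasible directions are exactly $\ker A$. I claim that $x^* \in R$ is the unique global minimizer if and only if $\nabla F(x^*) \perp \ker A$, i.e. $\nabla F(x^*) = {\rm arcsin}(x^*) \in (\ker A)^\perp = {\rm Row}(A)$. Sufficiency follows directly from convexity: writing ${\rm arcsin}(x^*) = A^{T}\lambda$, any feasible $x$ satisfies $x - x^* \in \ker A$, so $F(x) \ge F(x^*) + \nabla F(x^*)\cdot(x-x^*) = F(x^*) + \lambda^{T} A(x-x^*) = F(x^*)$, with strict inequality for $x \neq x^*$ by strict convexity. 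Necessity holds because at an interior minimizer the directional derivative along every $v \in \ker A$ must vanish, forcing $\nabla F(x^*) \in {\rm Row}(A)$.

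It remains to translate the stationarity condition into membership in ${\cal S}_A^+$. For $x^* \in (-1,1)^n$, put $y = {\rm arcsin}(x^*) \in (-\tfrac{\pi}{2}, \tfrac{\pi}{2})^n$; then ${\rm arcsin}(x^*) \in {\rm Row}(A)$ says precisely that $y \in {\rm Row}(A) \cap (-\tfrac{\pi}{2}, \tfrac{\pi}{2})^n = {\rm int}(P)$, whence $x^* = \sin(y) \in \sin({\rm int}(P)) = {\cal S}_A^+$, and conversely every point of ${\cal S}_A^+$ arises this way since ${\rm arcsin}$ inverts $\sin$ on $(-\tfrac{\pi}{2},\tfrac{\pi}{2})^n$. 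Combining the paragraphs, $x^*$ is the unique minimizer of \eqref{eq:optprob} if and only if $x^* \in (-1,1)^n$, $Ax^* = \omega/K$ and ${\rm arcsin}(x^*) \in {\rm Row}(A)$, which is exactly $x^* \in {\cal S}_A^+ \cap \{AKx = \omega\}$. The main obstacle is conceptual rather than technical: one must spot the identity $f' = {\rm arcsin}$, which is what makes the gradient land in ${\rm Row}(A)$ precisely on ${\cal S}_A^+$; and, since $R$ is open, one cannot certify global optimality by a compactness argument but instead must use the convexity inequality above, existence of the optimum being equivalent to nonemptiness of ${\cal S}_A^+ \cap \{AKx = \omega\}$.
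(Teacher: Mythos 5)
Your proof is correct and follows essentially the same route as the paper's: strict convexity of $\sum_j g(x_j)$ with $g(t) = t\,\arcsin(t) + \sqrt{1-t^2}$, the identity $g'(t) = \arcsin(t)$, and the observation that the Lagrange/stationarity condition $\arcsin(x^*) \in {\rm Row}(A)$ together with $x^* \in (-1,1)^n$ is exactly membership in ${\cal S}_A^+$. You are somewhat more explicit than the paper about why stationarity is both necessary and sufficient for global optimality on the open feasible set, but this is a matter of detail, not of approach.
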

\begin{proof}
    The function $g(t) = t \, {\rm arcsin}(t) + \sqrt{1-t^2}$ is strictly convex on the open interval $(-1,1)$. Hence, the objective function $\sum_{j = 1}^n g(x_j)$ is strictly convex on the feasible region of our optimization problem. If a minimizer $x^* \in (-1,1)^n$ exists, then it is unique, and it satisfies the first order optimality conditions 
    \[ \frac{\partial \, {\rm Lag}}{\partial x_j} \, = \, \frac{\partial}{\partial x_j} \Big ( \sum_{j = 1}^n g(x_j) - \lambda^t (Ax - \omega/K) \Big ) \, = \, 0 \quad \text{and} \quad Ax = \omega/K.\]
    Here ${\rm Lag}$ is the Lagrangian and $\lambda = (\lambda_1, \ldots, \lambda_d)$ are the Lagrange multipliers. The derivative of $g(t)$ is ${\rm arcsin}(t)$. Hence, the equations coming from partial derivatives of ${\rm Lag}$ with respect to $x_j$ are equivalent to ${\rm arcsin}(x) \in {\rm Row}(A)$, which implies ${\rm arcsin}(x) \in {\rm int}(P)$. Taking the coordinate-wise sine on both sides, we see that this is equivalent to $x \in {\cal S}_A^+$.  
\end{proof}

Motivated by Theorem \ref{thm:convexoptimizationKuramoto}, we introduce the following notation for the $A$-projection of~${\cal S}_A^+$:
\[ \Omega^+ \, = \, \{ A \, x \in \mathbb{R}^d \, : \, x \in {\cal S}_A^+ \} \, = \, A ({\cal S}_{A}^+).\]
The optimization problem \eqref{eq:optprob} has a unique minimizer if and only if $\omega/K \in \Omega^+$. The projection $A: {\cal S}_A^+ \rightarrow \Omega^+$ is one-to-one, and the inverse is given by $\omega \mapsto {\cal S}_A^+ \cap \{Ax = \omega\}$.
\begin{example}
    We set $K = 1$ and use the matrix $A = \left (\begin{smallmatrix}
        1 & 0 & -1\\-1 & 1 & 0
    \end{smallmatrix} \right )$ associated with $G = C_3$. The manifold ${\cal S}_A^+$ is shown in blue in the left part Figure \ref{fig:SA+andOmega+}.
    \begin{figure}
    \centering
    \includegraphics[height = 4.5cm]{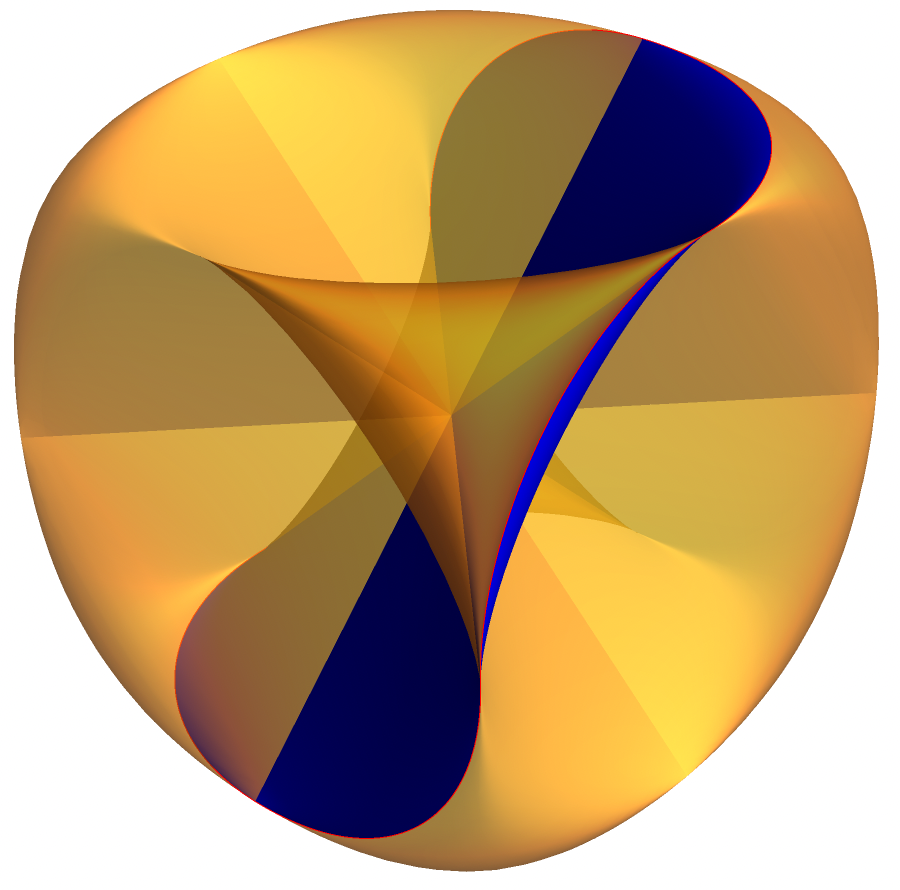} \quad \quad \quad \quad 
    \includegraphics[height = 4.5cm]{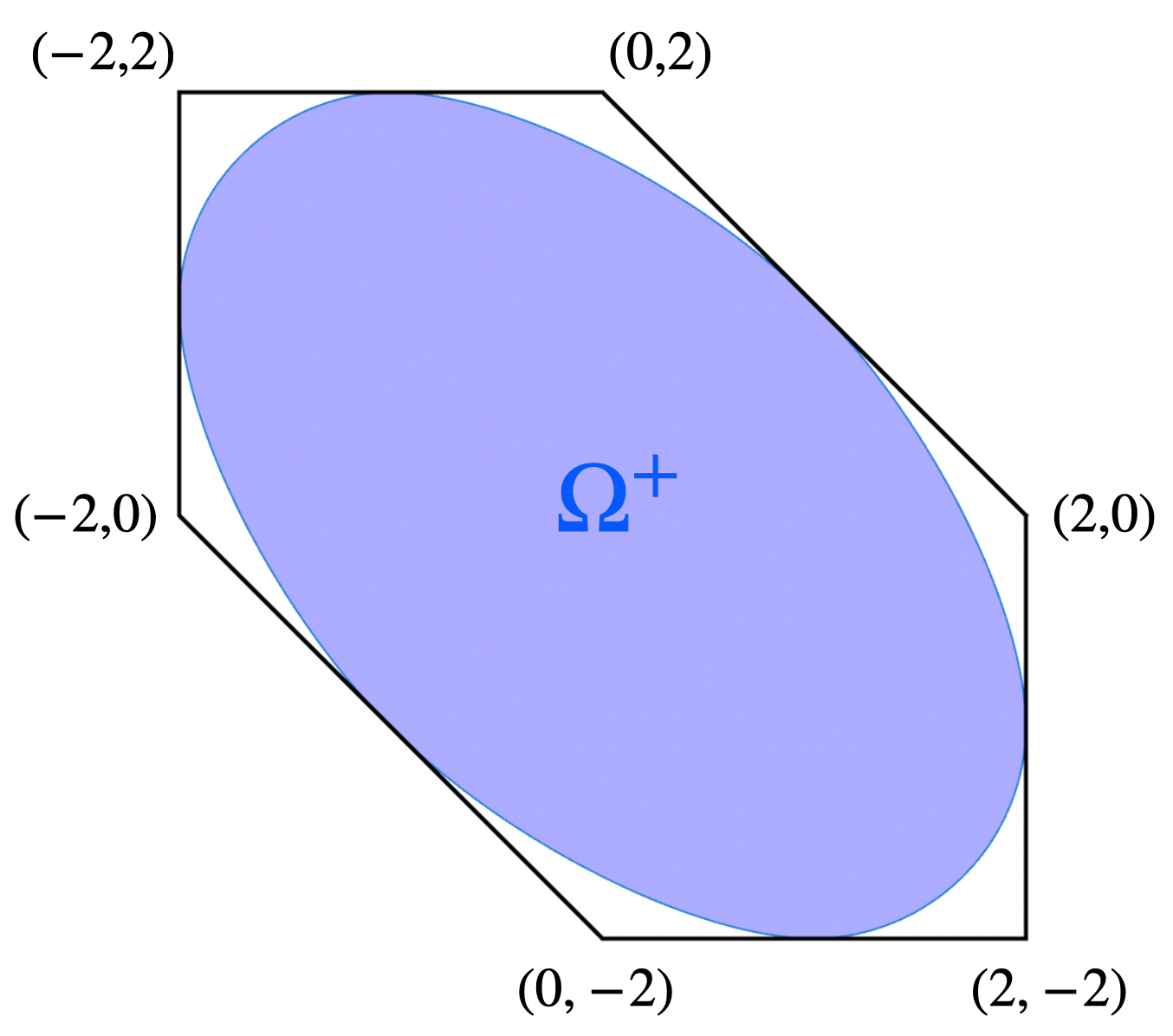}
    \caption{Positive regions for $x \in {\cal S}_A$ (left) and $\omega \in \mathbb{R}^2$ (right).}
    \label{fig:SA+andOmega+}
\end{figure}
    Its projection to $\mathbb{R}^2$ via $x \mapsto (x_1-x_3,-x_1 + x_2)$ is $\Omega^+$, seen in Figure \ref{fig:SA+andOmega+} (right). This set is contained in a hexagon $Q$, obtained as the $A$-projection of the cube $[-1,1]^3$. For $\omega \in {\rm int}(Q)$, the feasible region of \eqref{eq:optprob} is an open line segment $\ell = \{Ax = \omega\} \cap (-1,1)^n$. For $\omega \in Q \setminus \Omega^+$, the objective function does not attain a minimum on $\ell$. For $\omega \in \Omega^+$, there is a unique minimizer given by $\ell \cap {\cal S}_A^+$.   
\end{example}

We note that a different nonlinear (transcendental) optimization approach for $\omega = 0$ is discussed in \cite{ling2019landscape}. In Section \ref{sec:optim}, we shall establish a more general connection between Lissajous varieties and optimization. 
Before that, we conclude the present section with a remark on the stability of equilibria. 
Write the system of ODEs in \Cref{eq:KuramotoModel} as $\dot{\theta}=\Phi(\theta)$. 
Let $J_{\Phi}(\theta)=\begin{pmatrix}
    \frac{\partial\Phi_l}{\partial\theta_j}(\theta)
\end{pmatrix}_{lj}$ be the Jacobian matrix of $\Phi$. 
The matrix $J_{\Phi}(\theta)$ is symmetric, hence its eigenvalues are real. 
Because of the linear relation between the equations \eqref{eq:KuramotoModel} pointed out above, $J_{\Phi}(\theta)$ is rank deficient -- one of its eigenvalues is always zero.
A solution $\theta^*$ to $\Phi(\theta)=0$ is called \emph{linearly stable} if all other eigenvalues of $J_{\Phi}(\theta^*)$ are negative. If $\omega/K \in \Omega^+$, then the equilibrium $\theta^*$ corresponding to the minimizer $x^*$ from Theorem \ref{thm:convexoptimizationKuramoto} is linearly stable. 
\begin{proposition} \label{prop:stable}
    Let $A$ be the reduced incidence matrix of $G$ and let $\omega/K\in\Omega^+$. Then, the unique minimizer described in \Cref{thm:convexoptimizationKuramoto} yields a linearly stable equilibrium $\theta^*$ of the corresponding Kuramoto model. This is the unique vector $\theta^*$ satisfying $A^t \theta^* = {\rm arcsin}(x^*)$.
\end{proposition}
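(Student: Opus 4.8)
The plan is to establish two separate claims: first, that $\theta^*$ is a genuine equilibrium of the Kuramoto system determined by the minimizer $x^*$, and second, that this equilibrium is \emph{linearly stable} in the sense that all nonzero eigenvalues of the Jacobian $J_\Phi(\theta^*)$ are negative. For the first claim, I would use Theorem \ref{thm:convexoptimizationKuramoto}: the minimizer $x^*$ lies in $\mathcal{S}_A^+ \cap \{AKx = \omega\}$, so $x^* = \sin(\mathrm{arcsin}(x^*))$ with $\mathrm{arcsin}(x^*) \in \mathrm{int}(P) \subset \mathrm{Row}(A)$. Setting $y = \mathrm{arcsin}(x^*) \in \mathrm{Row}(A)$, there is a unique $\theta^* \in \mathbb{R}^m$ (after normalizing, say $\theta_m^* = 0$) with $A^t \theta^* = y$, because $A = A(G)$ has full row rank $d = m-1$ and its transpose encodes the edge-difference map $\theta \mapsto (\theta_k - \theta_j)_{\text{edges}}$. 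Then $\sin(A^t \theta^*) = x^*$ satisfies $A x^* = \omega/K$, which is exactly the steady-state condition $\dot\theta_k = 0$ from \eqref{eq:KuramotoModel}. This gives the equilibrium and the uniqueness statement $A^t\theta^* = \mathrm{arcsin}(x^*)$.

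For stability, I would compute the Jacobian $J_\Phi(\theta^*)$ explicitly. Differentiating \eqref{eq:KuramotoModel}, the off-diagonal entries are $\partial \Phi_k/\partial \theta_j = K\cos(\theta_j - \theta_k)$ for $j \in V_k$, and the diagonal entries are $\partial \Phi_k/\partial\theta_k = -K\sum_{j\in V_k}\cos(\theta_j-\theta_k)$. This is precisely the negative of a \emph{weighted graph Laplacian} $L$ with edge weights $w_{kj} = K\cos(\theta_j^* - \theta_k^*)$. The crucial observation is that for each edge $\ell$ corresponding to the difference $\theta_k - \theta_j$, the edge-difference value lies in $(-\tfrac{\pi}{2}, \tfrac{\pi}{2})$ because $y = A^t\theta^* = \mathrm{arcsin}(x^*) \in \mathrm{int}(P) = \mathrm{Row}(A) \cap (-\tfrac{\pi}{2}, \tfrac{\pi}{2})^n$ — the interior of the box is exactly where every coordinate difference is strictly between $-\pi/2$ and $\pi/2$. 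Hence every weight $w_{kj} = K\cos(\text{something in }(-\pi/2,\pi/2))$ is strictly positive.

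Once positivity of all edge weights is secured, stability follows from standard spectral graph theory: a weighted Laplacian $L$ of a connected graph with strictly positive edge weights is positive semidefinite with a one-dimensional kernel spanned by the all-ones vector, so its eigenvalues are $0 = \mu_1 < \mu_2 \le \cdots \le \mu_m$. Therefore $J_\Phi(\theta^*) = -L$ has eigenvalues $0 = -\mu_1 > -\mu_2 \ge \cdots$, meaning the single zero eigenvalue (forced by the linear dependence among the equations, as noted before the proposition) and all remaining eigenvalues strictly negative. By the definition of linear stability given in the text, $\theta^*$ is linearly stable. I would quote connectedness of $G$ to guarantee the kernel is exactly one-dimensional.

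The main obstacle I anticipate is the \emph{sign argument for the edge weights}: one must carefully translate the geometric condition ``$\mathrm{arcsin}(x^*) \in \mathrm{int}(P)$'' into the statement that each oscillator phase-difference $\theta_k^* - \theta_j^*$ across an edge lies strictly inside $(-\tfrac{\pi}{2}, \tfrac{\pi}{2})$, so that $\cos$ of it is positive. The subtlety is bookkeeping: the coordinates of $\mathrm{arcsin}(x^*) \in \mathrm{Row}(A)$ are indexed by \emph{edges}, and $A^t\theta^*$ produces exactly the oriented phase-differences along those edges, so the interior condition on $P$ directly controls the signs. Verifying this identification cleanly — rather than the subsequent Laplacian spectral facts, which are classical — is where the argument needs the most care.
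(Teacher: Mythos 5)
Your proof is correct and follows essentially the same route as the paper: both arguments reduce to the observation that $\mathrm{arcsin}(x^*)=A^t\theta^*\in{\rm int}(P)$ forces $\cos(a_l\cdot\theta^*)>0$ for every edge, so that $J_\Phi(\theta^*)$ is the negative of a connected weighted graph Laplacian with positive weights. The only difference is that the paper outsources this last spectral step to \cite[Lemma 3.2]{harrington2023kuramoto}, whereas you prove it directly; your version is self-contained but otherwise identical in substance.
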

\begin{proof}
    If $\omega/K\in\Omega^+$, then $A^t \theta^*\in {\rm int}(P) \subset {\rm Row}(A)$. In particular, $\cos(a_l\cdot \theta^*)>0$ for all $l=1,\dotsc,n$. The statement then follows from \cite[Lemma 3.2]{harrington2023kuramoto}.
\end{proof}

While solving the equations \eqref{eq:Kuramoto_v_equations} leads to all potential equilibria, there may be many ``spurious'' solutions which have no physical meaning. Proposition \ref{prop:stable} says that if $\omega/K \in \Omega^+$, then a linearly stable solution can be found more efficiently via convex optimization. We now consider a larger graph to illustrate the computational range of this technique. 

\setcounter{MaxMatrixCols}{12}
\begin{example}
The graph shown in Figure \ref{fig:12edges} appears in \cite[Example 3.1]{harrington2023kuramoto}. The corresponding Lissajous variety ${\cal S}_A$ has degree 832. For $\omega = (0.1, \ldots, 0.1)$, the unique minimizer in \eqref{eq:optprob} is 
\[ x^* \, = \, (-0.050, 0.150, 0.083, 0.017, -0.050, 0.150, 0.017, 0.083, -0.067, 0.200, 0.067, 0.200),
\]
here rounded to three decimal places. This can be computed using, for instance, \texttt{JuMP.jl} \cite{Lubin2023}. The optimum is found in less than 0.01 seconds.  Solving $A^t \theta^* = {\rm arcsin}(x^*)$ we find that 
\[ \theta^* = (0.151, 0.285, 0.151, 0.285, 0.201, 0.268, 0.201), \]
which is a linearly stable equilibrium of the Kuramoto equations.
We verify that $x \in {\cal S}_A^+ \cap \{A \, x = \omega\}$ and $\omega \in \Omega^+$ by checking that $\omega = A \, {\rm sin}(A^t \theta^*)$, and $A^t \theta^* \in [-\tfrac{\pi}{2}, \tfrac{\pi}{2}]^{12}$.
\begin{figure}
\centering
\includegraphics[height=3.5cm]{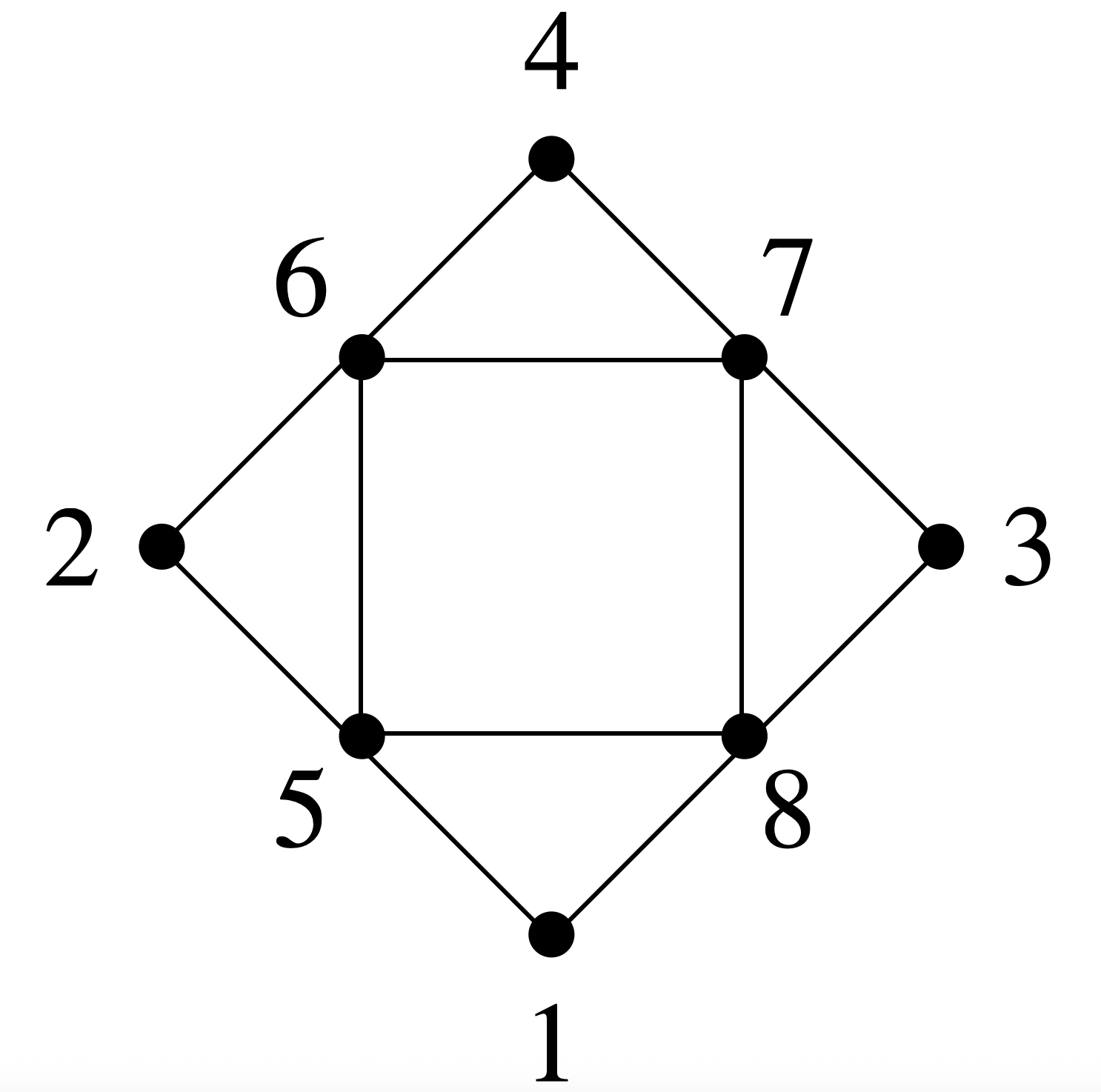} 
\quad 
\includegraphics[height=3.5cm]{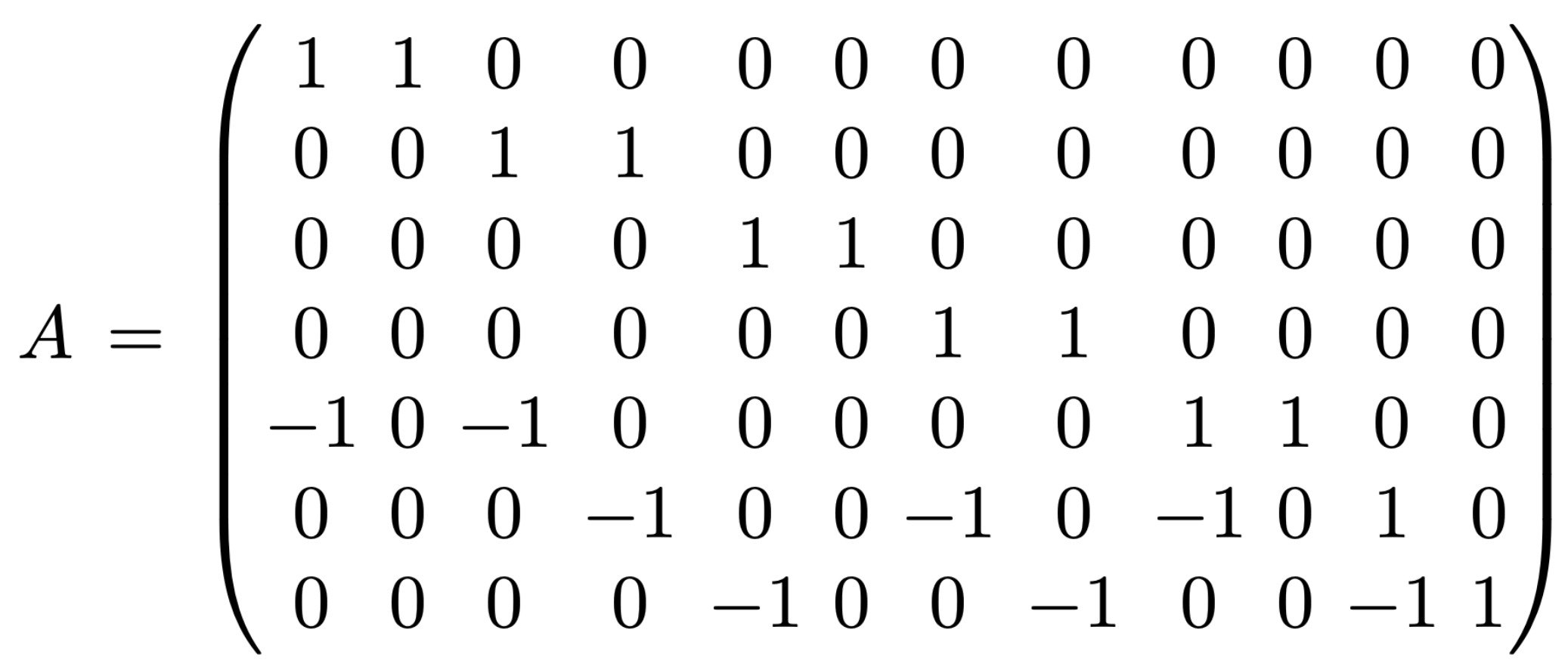}
    \caption{A graph and its reduced incidence matrix.}
    \label{fig:12edges}
\end{figure}    
\end{example}

\section{Positive points and convex optimization} \label{sec:optim}

In Theorem \ref{thm:convexoptimizationKuramoto} we showed that the open subset ${\cal S}_A^+ \subset {\cal S}_A = {\cal L}_{A,\bf 1}$ parametrizes all solutions of the optimization problem \eqref{eq:optprob} for $\omega \in \Omega^+$. In this section we prove a similar characterization for an arbitrary Lissajous variety ${\cal L}_{A,b}$. In particular, we generalize Theorem \ref{thm:convexoptimizationKuramoto} and Proposition \ref{prop:stable}. The Kuramoto model is generalized by the dynamical system 
\begin{equation} \label{eq:ourODE} \dot \theta \, = \, A \, \phi_{A,b}(\theta) - \omega, \end{equation}
where $A \in \mathbb{Q}^{d \times n}$ has rank $d$, $b \in \mathbb{R}^n$, and $\omega \in \mathbb{R}^d$. The map $\phi_{A,b}:[-\pi, \pi]^d \rightarrow [1,1]^n$ is 
\[ \phi_{A,b}(\theta) = (\cos(a_1 \cdot \theta - b_1 \, \tfrac{\pi}{2}), \ldots, \cos(a_1 \cdot \theta - b_n \, \tfrac{\pi }{2})).\]
Investigating the equilibria of this dynamical system leads us to solve $A \, \phi_{A,b}(\theta) - \omega = 0$. We complexify $\phi_{A,b}: \mathbb{C}^d \rightarrow \mathbb{C}^n$ and refer to the solutions $\theta \in \mathbb{C}^d$ as \emph{steady states} or \emph{equilibria}. The steady states correspond to points in the intersection ${\cal L}_{A,b} \cap \{A x = \omega \}$. We shall define ${\cal L}_{A,b}^+ \subset {\cal L}_{A,b} \cap (-1,1)^n$ such that there is at most one intersection point in ${\cal L}_{A,b}^+ \cap \{A x = \omega \}$. Moreover, if such a point exists, then it is the solution to a convex optimization problem.

The affine linear space $L_{A,b}$ is defined as $L_{A,b} = {\rm Row}(A) - \tfrac{b \pi}{2}$. Its image under the coordinate-wise cosine is ${\cal L}_{A,b}$. The appropriate generalizations of ${\cal S}_A^+$ and $\Omega^+$ are as follows: 
\[ {\cal L}_{A,b}^+ \, = \, \cos (L_{A,b} \cap (0,\pi)^n) \, \subset \, {\cal L}_{A,b},\quad \quad \Omega^+_{A,b} \, = \, \{ A \, x \in \mathbb{R}^d \, : \, x \in {\cal L}_{A,b}^+ \} \, = \, A({\cal L}_{A,b}^+).\]
One checks that this is consistent with the previous section, in that ${\cal L}_{A,\bf 1}^+ = {\cal S}_A^+$ and $\Omega_{A,{\bf 1}}^+ = \Omega^+$. 

Restricting to the real points of $L_{A,b}$, we have that ${\rm cos}(L_{A,b} \cap \mathbb{R}^n) = \cos (L_{A,b} \cap [-\pi,\pi]^n)$. This is a subset of ${\cal L}_{A,b}(\mathbb{R})$. The `$+$' in our notation is motivated by the fact that ${\cal L}_{A,b}^+$ is the image under the cosine map of all ``positive'' tuples of angles in $L_{A,b} \cap (0,\pi)^n$. 

\begin{theorem}\label{thm:convexoptimization}
    Let $A \in \mathbb{Q}^{d \times n}$ be of rank $d$. We have $x^* \in {\cal L}_{A,b}^+ \cap \{ A x = \omega \}$ if and only if $x^*$ is the unique minimizer of the following convex optimization problem: 
    \begin{equation} \label{eq:optprob2} {\rm minimize} \, \,  \frac{\text{-}\pi}{2} b^t x -  \sum_{j = 1}^n \Big (x_j \, {\rm arccos}(x_j) - \sqrt{1-x_j^2} \Big), \quad {\rm s.\,t.}  \quad Ax = \omega \, \,  \text{and} \, \, x \in (-1,1)^n . \end{equation}
    In particular, a minimizer exists if and only if $\omega \in \Omega_{A,b}^+$, and in that case it is unique. 
\end{theorem}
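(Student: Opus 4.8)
The plan is to mirror the proof of Theorem \ref{thm:convexoptimizationKuramoto}, replacing $\sin$ by $\cos$ and incorporating the linear term $\tfrac{-\pi}{2}b^t x$ that accounts for the phase shift in $L_{A,b} = {\rm Row}(A) - \tfrac{b\pi}{2}$. First I would verify strict convexity of the objective. Writing $h(t) = t\,{\rm arccos}(t) - \sqrt{1-t^2}$, a direct computation gives $h'(t) = {\rm arccos}(t)$ and $h''(t) = -1/\sqrt{1-t^2} < 0$ on $(-1,1)$, so $-h$ is strictly convex there, and hence the full objective $\tfrac{-\pi}{2}b^t x - \sum_j h(x_j)$ is strictly convex on the feasible region (the linear term does not affect convexity). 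Strict convexity on the convex feasible set $\{Ax=\omega\}\cap(-1,1)^n$ guarantees that a minimizer, if it exists, is unique.

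Next I would set up the Lagrangian ${\rm Lag} = \tfrac{-\pi}{2}b^t x - \sum_j h(x_j) - \lambda^t(Ax-\omega)$ and write the first-order stationarity conditions. Differentiating with respect to $x_j$ and using $h'(x_j) = {\rm arccos}(x_j)$ yields $-\tfrac{\pi}{2}b_j + {\rm arccos}(x_j) - (A^t\lambda)_j = 0$, i.e. ${\rm arccos}(x_j) = \tfrac{\pi}{2}b_j + (A^t\lambda)_j$ for each $j$. In vector form this reads ${\rm arccos}(x) = \tfrac{\pi}{2}b + A^t\lambda$, which says exactly that ${\rm arccos}(x) \in {\rm Row}(A) + \tfrac{\pi}{2}b = -L_{A,b}$, equivalently $\tfrac{\pi}{2}b - {\rm arccos}(x)$ lies in ${\rm Row}(A)$; I would keep careful track of signs so that the stationarity condition lands precisely on the affine space $L_{A,b}$.

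The key identification is that the stationarity condition combined with the constraint is equivalent to membership in ${\cal L}_{A,b}^+ \cap \{Ax=\omega\}$. Since $x \in (-1,1)^n$, the angle ${\rm arccos}(x) \in (0,\pi)^n$, so the stationarity condition says ${\rm arccos}(x) \in L_{A,b} \cap (0,\pi)^n$; applying the coordinate-wise cosine (which inverts ${\rm arccos}$ on $(0,\pi)$) gives $x = \cos({\rm arccos}(x)) \in \cos(L_{A,b}\cap(0,\pi)^n) = {\cal L}_{A,b}^+$. Conversely any $x^* \in {\cal L}_{A,b}^+$ has $x^* = \cos(\theta)$ for a unique $\theta \in L_{A,b}\cap(0,\pi)^n$, and ${\rm arccos}(x^*) = \theta \in L_{A,b}$ recovers the stationarity condition; together with $Ax^*=\omega$ this shows $x^*$ satisfies the KKT conditions, hence is the minimizer. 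I would then note that existence of a minimizer is equivalent to the feasible region being nonempty and the infimum being attained, which by the bijection ${\rm int}(P) \simeq {\cal L}_{A,b}^+$ of Theorem-style arguments is equivalent to $\omega \in A({\cal L}_{A,b}^+) = \Omega_{A,b}^+$.

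The main obstacle I anticipate is purely the sign-and-range bookkeeping: ${\rm arccos}$ maps $(-1,1)$ to $(0,\pi)$ (not $(-\tfrac{\pi}{2},\tfrac{\pi}{2})$ as ${\rm arcsin}$ does), and the affine shift is $-\tfrac{b\pi}{2}$, so I must confirm that the stationarity equation lands in $L_{A,b} \cap (0,\pi)^n$ rather than some reflected copy, and that the linear term $\tfrac{-\pi}{2}b^t x$ is exactly what produces the $\tfrac{\pi}{2}b$ offset. A secondary point worth a clean argument is the ``if and only if'' for existence: I would invoke that strict convexity plus the bijectivity of the cosine parametrization on $(0,\pi)^n$ makes the correspondence $\omega \leftrightarrow x^*$ a genuine bijection between $\Omega_{A,b}^+$ and ${\cal L}_{A,b}^+$, so a minimizer exists precisely when $\omega \in \Omega_{A,b}^+$. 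Everything else reduces to the convex-optimization template already established for the Kuramoto case.
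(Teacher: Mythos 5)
Your proposal follows the paper's proof exactly: strict convexity of the objective via the second derivative of the summand, Lagrange multipliers for the first-order conditions, and the identification of the stationary point with ${\cal L}_{A,b}^+\cap\{Ax=\omega\}$; the existence claim is likewise reduced to the bijection $A\colon {\cal L}_{A,b}^+\to\Omega_{A,b}^+$. The sign worry you flag resolves itself once you remember the minus sign in front of the sum: with $h(t)=t\,{\rm arccos}(t)-\sqrt{1-t^2}$ and $h'(t)={\rm arccos}(t)$, stationarity of ${\rm Lag}=\tfrac{-\pi}{2}b^tx-\sum_j h(x_j)-\lambda^t(Ax-\omega)$ reads $-\tfrac{\pi}{2}b_j-{\rm arccos}(x_j)-(A^t\lambda)_j=0$, i.e.\ ${\rm arccos}(x)\in{\rm Row}(A)-\tfrac{\pi b}{2}=L_{A,b}$ directly, not in the reflected copy, and combined with ${\rm arccos}(x)\in(0,\pi)^n$ this is exactly membership in $\cos(L_{A,b}\cap(0,\pi)^n)={\cal L}_{A,b}^+$.
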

\begin{proof}
The first and second order derivatives of  $g(t) = - t \, {\rm arccos}(t) +  \sqrt{1-t^2}$ are
\[ g'(t) \, =\, - {\rm arccos}(t) \quad \text{and} \quad g''(t) = (1-t^2)^{-1/2}.\]
Hence, the objective function in \eqref{eq:optprob2} is strictly convex on $(-1,1)^n$, and so is its restriction to the feasible region. The rest of the proof is analogous to that of Theorem \ref{thm:convexoptimizationKuramoto}. The method of Lagrange multipliers gives the first order optimality conditions ${\rm arccos}(x) \in {\rm Row}(A) - \tfrac{\pi b}{2}$ and $Ax = \omega$, which is equivalent to $x \in {\cal L}_{A,b}^+ \cap \{Ax = \omega\}$.
\end{proof}

\begin{remark}
    Theorem \ref{thm:convexoptimization} is inspired by analogous convex optimization problems in which the constraint $x \in (-1,1)^n$ is replaced by $x \in \mathbb{R}^n_+$ and the objective function is a strictly convex function $G(x)$ on the positive orthant. Often $G$ is of the form $G(x) = \sum_{j = 1}^n g(x_j)$. Appropriate choices of $g$ lead naturally to semi-algebraic descriptions of the unique minimizer, similar to $x^* = {\cal L}_{A,b} \cap \{Ax = \omega\}$ in Theorem \ref{thm:convexoptimization}. For $g(t) = \log(t)$, the Lissajous variety is replaced by a \emph{reciprocal linear space} \cite{de2012central}. The function $g(t) = t \log (t) - t$ naturally leads to \emph{positive toric varieties} \cite{sturmfels2024toric}. If the universal barrier function $G(x)$ of the feasible polytope is minimized instead, then one intersects $\{A x = \omega\}$ with the \emph{Santal\'o patchwork} \cite{pavlov2025santalo}.
\end{remark}

\begin{remark}
    The coordinates of the minimizer $x^*$ of \eqref{eq:optprob2} are algebraic functions of $\omega$. Their minimal polynomial in $\mathbb{Q}(\omega)[x_j]$ has degree at most $\deg( {\cal L}_{A,b})$, see the formula stated in Theorem \ref{thm:degree}. This follows from the fact that $x^* \in {\cal L}_{A,b} \cap \{A x = \omega\}$ by Theorem \ref{thm:convexoptimization}.  
\end{remark}

Write the equations \eqref{eq:ourODE} as $\dot \theta = \Phi_{A,b}(\theta)$ and let $J_{\Phi_{A,b}}(\theta)$ be the $d \times d$ Jacobian matrix. We say that a steady state solution $\theta^*$ is \emph{linearly stable} if all eigenvalues of $J_{\Phi}(\theta^*)$ are negative. 
\begin{proposition}
Let $\omega \in \Omega_{A,b}^+$ and let $x^* \in {\cal L}_{A,b}^+$ be the unique  minimizer of the optimization problem \eqref{eq:optprob2}. The unique solution $\theta^*$ of the linear equations $A^t \theta^* = {\rm arccos}(x^*) + \tfrac{\pi b}{2}$ is a linearly stable steady state solution of the dynamical system \eqref{eq:ourODE}. 
\end{proposition}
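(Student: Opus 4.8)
The plan is to reduce the linear stability of $\theta^*$ to a definiteness statement for the Jacobian $J_{\Phi_{A,b}}(\theta^*)$, in direct analogy with the Kuramoto case of Proposition~\ref{prop:stable}. First I would compute this Jacobian. Differentiating $\Phi_{A,b}(\theta) = -A\,\phi_{A,b}(\theta) + \omega$ coordinate-wise, using $\tfrac{\partial}{\partial \theta_j}\cos(a_l \cdot \theta - b_l\tfrac{\pi}{2}) = -\sin(a_l \cdot \theta - b_l\tfrac{\pi}{2})\, a_{jl}$, the chain rule yields
\[ J_{\Phi_{A,b}}(\theta) \, = \, A \, {\rm diag}\big( \sin(a_1 \cdot \theta - b_1 \tfrac{\pi}{2}), \ldots, \sin(a_n \cdot \theta - b_n \tfrac{\pi}{2}) \big) \, A^t . \]
As with the Kuramoto Jacobian of the form $A D A^t$ in Section~\ref{sec:4}, this matrix is symmetric, so its eigenvalues are real, and linear stability is equivalent to $J_{\Phi_{A,b}}(\theta^*)$ being negative definite. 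I would abbreviate its middle factor at the steady state by $D = {\rm diag}(\sin(a_l \cdot \theta^* - b_l \tfrac{\pi}{2}))_{l=1}^n$.

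Next I would use that ${\rm rank}(A)=d$ to pass from definiteness of $D$ to definiteness of the whole Jacobian. For $v \in \mathbb{R}^d$ one has $v^t A D A^t v = \sum_{l=1}^n D_{ll}\,(A^t v)_l^2$, which carries a single strict sign unless $A^t v = 0$; since full row rank makes $A^t$ injective, this forces $v = 0$. Hence $J_{\Phi_{A,b}}(\theta^*)$ is definite, with all eigenvalues sharing the common sign of the diagonal entries of $D$. The whole argument therefore collapses to a single point: showing that at the steady state $\theta^*$ attached to a point $x^*$ of the positive part, every $\sin(a_l \cdot \theta^* - b_l \tfrac{\pi}{2})$ is strictly negative, so that $D \prec 0$ and all eigenvalues of $J_{\Phi_{A,b}}(\theta^*)$ are negative.

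This sign is exactly where the definition of ${\cal L}_{A,b}^+$ must be brought to bear, and it is the main obstacle. Since $x_l^* = \cos(a_l \cdot \theta^* - b_l \tfrac{\pi}{2})$ and $x^* \in {\cal L}_{A,b}^+ \subset (-1,1)^n$, each $\sin(a_l \cdot \theta^* - b_l \tfrac{\pi}{2}) = \pm\sqrt{1-(x_l^*)^2}$ is nonzero, so $D$ is automatically definite; what requires care is that the common sign is the \emph{negative} one, which is precisely what distinguishes a stable steady state from an unstable one. The intersection with an open box defining ${\cal L}_{A,b}^+$ confines each angle $a_l \cdot \theta^* - b_l \tfrac{\pi}{2}$ to a single half-period, and the crux is to verify that this is the half-period on which $\sin$ is negative, i.e.\ that $\theta^*$ sits on the correct branch of the arc-cosine. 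This step is the exact generalization of the condition $\cos(a_l \cdot \theta^*) > 0$ obtained in Proposition~\ref{prop:stable} from \cite[Lemma 3.2]{harrington2023kuramoto}, and I expect it, rather than the linear-algebra reduction, to be where the real work lies. Uniqueness of $\theta^*$ is then immediate from the injectivity of $A^t$ applied to the defining linear system $A^t \theta^* = {\rm arccos}(x^*) + \tfrac{\pi b}{2}$, together with the correspondence of Theorem~\ref{thm:convexoptimization}.
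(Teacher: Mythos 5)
Your skeleton is the same as the paper's: write the Jacobian as $\pm A\,{\rm diag}(\sin(a_l\cdot\theta - b_l\tfrac{\pi}{2}))\,A^t$, use symmetry and ${\rm rank}(A)=d$ to reduce negative definiteness to a uniform sign of the diagonal entries, and extract that sign from $x^*\in{\cal L}_{A,b}^+$. But the proposal has a genuine gap precisely at the step you yourself flag as ``where the real work lies'': you never determine the sign. In the paper this step is a one-line consequence of the defining linear system, not a delicate branch analysis: $A^t\theta^* = {\rm arccos}(x^*) + \tfrac{\pi b}{2}$ gives $a_l\cdot\theta^* - b_l\tfrac{\pi}{2} = {\rm arccos}(x_l^*)\in(0,\pi)$ for every $l$ (since ${\rm arccos}$ maps $(-1,1)$ into $(0,\pi)$), hence $\sin(a_l\cdot\theta^* - b_l\tfrac{\pi}{2}) = \sqrt{1-(x_l^*)^2} > 0$. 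No analogue of the Kuramoto lemma from \cite{harrington2023kuramoto} is invoked; the sign falls out of the definition of $\theta^*$.

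Worse, once that sign is filled in, your argument as written proves the opposite of what is claimed. You computed $J_{\Phi_{A,b}} = +A D A^t$ and therefore need $D\prec 0$; the definitions force $D\succ 0$, which under your formula yields a \emph{positive} definite Jacobian, i.e.\ instability. The paper's proof instead writes $J_{\Phi_{A,b}}(\theta) = -A\,{\rm diag}(\sin(a_1\cdot\theta - b_1\tfrac{\pi}{2}),\ldots,\sin(a_n\cdot\theta - b_n\tfrac{\pi}{2}))\,A^t$, so that positive sines immediately give negative definiteness. There is a sign discrepancy between your chain rule computation and the paper's displayed Jacobian, and your write-up does not resolve it; until you track down where the overall minus sign lives (in the Jacobian, or equivalently in which half-period $(0,\pi)$ versus $(-\pi,0)$ the angles ${\rm arccos}(x^*)$ land relative to $A^t\theta^* - \tfrac{\pi b}{2}$), the proof does not close. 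You also omit the (easy but necessary) verification that $\theta^*$ is a steady state at all, namely $A\,\phi_{A,b}(\theta^*) = A\,x^* = \omega$.
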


\begin{proof}
Note that $\theta^*$ is a steady state solution by construction: $A \phi_{A,b}(\theta^*) = A \, x^* = \omega$.  The Jacobian matrix $J_{\Phi_{A,b}}$ has the following explicit expression: 
\[ J_{\Phi_{A,b}}(\theta) \, = \, - A \, \,  {\rm diag} (\sin(a_1 \cdot \theta - b_1 \tfrac{\pi}{2}), \ldots, \sin(a_n \cdot \theta - b_n \tfrac{\pi }{2})) \, \,  A^t. \]
Since ${\rm arccos}(x^*) = A^t \theta^* - \frac{\pi b}{2}\in (0,\pi)^n$, the diagonal matrix in this expression has positive diagonal entries for $\theta = \theta^*$. Hence, $J_{\Phi_{A,b}}(\theta^*)$ is negative definite. 
\end{proof}

\begin{example} \label{ex:dynsyscircle}
The Lissajous curve ${\cal L}_{A,b}$ for the data $A = \begin{pmatrix}
1 & 1 
\end{pmatrix}$, $b = (0,1)$ is the circle with defining equation $x^2 + y^2 = 1$, see Example \ref{ex:circleintro}. The semi-algebraic set ${\cal L}_{A,b}^+$ is the segment of the curve contained in the $(-,+)$ quadrant, see Figure \ref{fig:circle}. 
\begin{figure}
\centering
\includegraphics[height = 4cm]{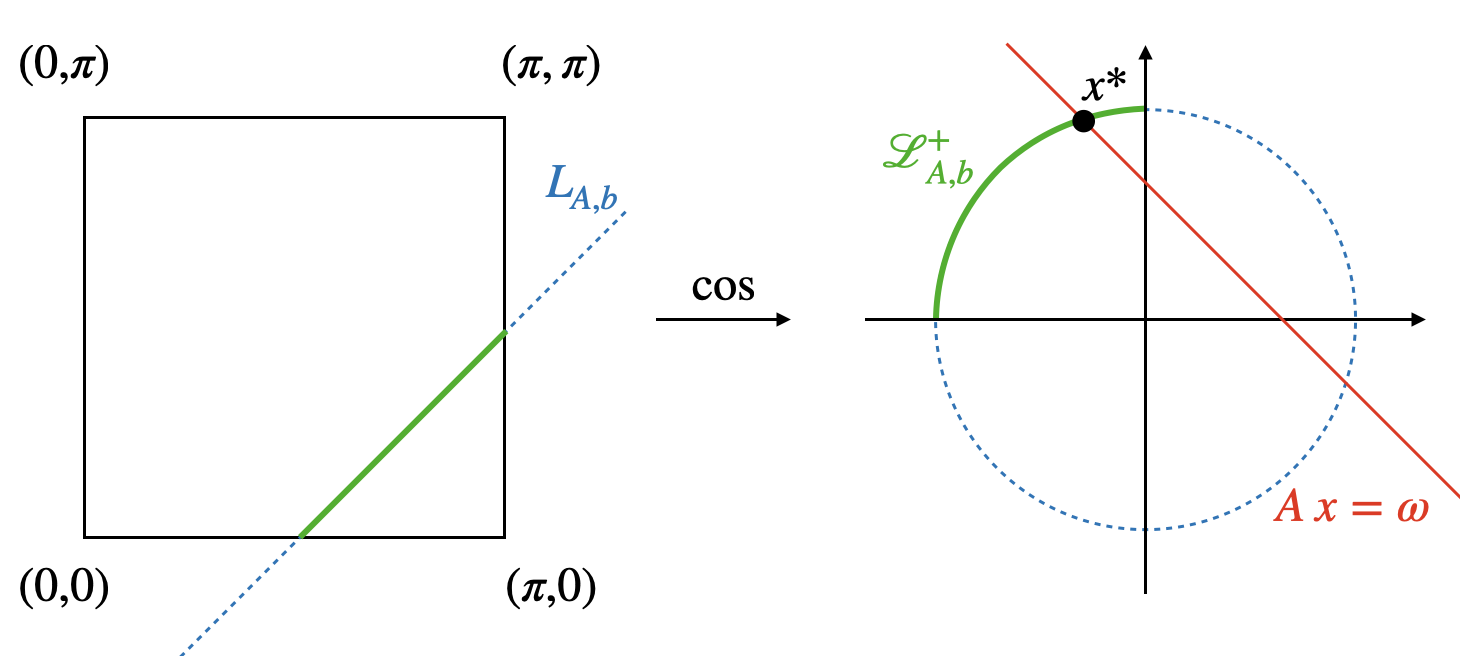}
\caption{The line $x + y = \omega$ has precisely one intersection point with ${\cal L}_{A,b}^+$ for $\omega \in (-1,1)$. }
\label{fig:circle}
\end{figure}
The projection $\Omega_{A,b}^+ = A({\cal L}_{A,b}^+)$ is the open line segment $(-1,1)$. For $\omega \in \Omega_{A,b}^+$, the line $x + y = \omega$ has one intersection point with ${\cal L}_{A,b}^+$. This is the unique solution to \eqref{eq:optprob2}. The differential equation 
\[ \dot \theta \, = \, \cos(\theta) + \sin(\theta) - \omega, \quad \omega \in \Omega_{A,b}^+ \]
has two steady state solutions $\theta^*$. Only one of them is stable. For concreteness, set $\omega = 0.6$. The stable equilibrium is $\theta^* \approx 1.918$. Its image $(\cos(\theta^*), \sin(\theta^*))$ is ${\cal L}_{A,b}^+ \cap \{ x + y = 0.6\}$.
\end{example}

\section{Lissajous discriminants} \label{sec:5}

Varying the natural frequencies $\omega$ affects the number of real solutions to the systems of polynomial equations \eqref{eq:LissajousKuramotoEqs} and \eqref{eq:Kuramoto_v_equations}. In the context of dynamical systems, varying $\omega$ influences the nature of the equilibria, which is the topic of bifurcation analysis.  Investigating this leads us to study a discriminant locus in the $\omega$ parameters. More precisely, we shall define a variety $\nabla_{A,b} \subset \mathbb{C}^d$, which is expected to be a hypersurface, such that the number of real solutions to 
\begin{equation}  \label{eq:kuramoto_v_eqs_withbeta}
\tfrac{a_{j1}}{2}(\beta_1 v^{a_1} + \beta_1^{-1} v^{-a_1}) + \cdots + \tfrac{a_{jn}}{2}(\beta_n v^{a_n} +  \beta_n^{-1} v^{-a_n}) - \omega_j \, = \,  0, \quad j = 1, \ldots, d \end{equation}
is constant for $\omega$ in each connected component of $\mathbb{R}^d \setminus \nabla_{A,b}$. For $b = \mathbf{1}$ ($\beta =  -i \cdot \mathbf{1}$), these are the equations \eqref{eq:Kuramoto_v_equations}. Taking cues from standard discriminant analysis, $\nabla_{A,b}$ should consist of points $\omega \in \mathbb{C}^d$ for which two complex solutions of \eqref{eq:kuramoto_v_eqs_withbeta} collide. We now make this precise.

We continue to assume that the matrix $A \in \mathbb{Z}^{d \times n}$ has full rank $d$. Note that we can write the equations \eqref{eq:kuramoto_v_eqs_withbeta} in a compact way as follows: $A  \, \psi_{A,b}(v)= \omega$, where $\psi_{A,b}$ is as in \eqref{eq:psiAb}. Consider the \emph{incidence variety} $W_{A,b} = \{(v, \omega) \in (\mathbb{C}^*)^d \times \mathbb{C}^d \, : \, A \, \psi_{A,b}(v)= \omega\}$. The fiber of 
\begin{equation} \label{eq:promega} {\rm pr}_\omega \, : \, W_{A,b} \, \longrightarrow \,  \mathbb{C}^d, \quad (v, \omega) \longmapsto \omega\end{equation}
over $\omega \in \mathbb{C}^d$ consists of the solutions $v$ to \eqref{eq:kuramoto_v_eqs_withbeta} for that fixed value of $\omega$. The \emph{toric Jacobi matrix} of $v \mapsto A \, \psi_{A,b}(v)$ is the $d \times d$ matrix given by 
\[ J_{A,b}(v) \, = \, \left( v_j \, \frac{\partial}{\partial v_j} (A \, \psi_{A,b}(v))_k\right)_{1 \leq j,k \leq d}  \, = \, \frac{1}{2} \,  A \, {\rm diag}( \beta_1 v^{a_1}-\beta_1^{-1} v^{-a_1}, \ldots,\beta_n v^{a_n}-\beta_n^{-1} v^{-a_n} )\, A^t  .\]
This is the usual Jacobi matrix, with $\tfrac{\partial}{\partial v_j}$ replaced by the Euler operator $v_j \, \tfrac{\partial}{\partial v_j}$. One checks that for a point $v \in {\rm pr}_\omega^{-1}(\omega)$, the toric Jacobian determinant $\det J_{A,b}(v)$ vanishes if and only if the usual Jacobian determinant vanishes. We prefer the toric version because of the elegant expression $J_{A,b}(v) = \tfrac{1}{2}AD(v)A^t$, where $D(v)$ is the diagonal $n \times n$ matrix shown above. 
\begin{lemma}
    The toric Jacobian $\det J_{A,b}(v)$ is not identically zero as a Laurent polynomial in $\beta$ and $v$. Moreover, for generic $\beta \in (\mathbb{C}^*)^n$ and generic $\omega \in \mathbb{C}^d$, the fiber ${\rm pr}_\omega^{-1}(\omega)$ is finite. That is, for generic $\beta,\omega$ the equations \eqref{eq:kuramoto_v_eqs_withbeta} have finitely many solutions $v \in (\mathbb{C}^*)^d$. 
\end{lemma}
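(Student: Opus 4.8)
The plan is to reduce both claims to the single statement that the morphism $F_\beta = A\,\psi_{A,b}\colon(\mathbb{C}^*)^d\to\mathbb{C}^d$ is dominant for generic $\beta$. Indeed, $(\mathbb{C}^*)^d$ and $\mathbb{C}^d$ are both irreducible of dimension $d$, so a dominant $F_\beta$ automatically has zero-dimensional, hence finite, generic fibers; and by the Jacobian criterion $F_\beta$ is dominant exactly when its toric Jacobian $\det J_{A,b}(v)$ does not vanish identically in $v$. As noted just before the lemma, the toric Jacobian vanishes at a point of ${\rm pr}_\omega^{-1}(\omega)$ if and only if the ordinary Jacobian does, so this is the right differential to control. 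Everything therefore rests on showing $\det J_{A,b}(v)\not\equiv 0$ as a Laurent polynomial in $(\beta,v)$.

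First I would expand $\det J_{A,b}(v)=\det\!\big(\tfrac12 A\,D(v)\,A^t\big)$, with $D(v)={\rm diag}(d_1(v),\dots,d_n(v))$ and $d_k(v)=\beta_k v^{a_k}-\beta_k^{-1}v^{-a_k}$, using the Cauchy--Binet formula. This yields
\[ \det J_{A,b}(v)=\frac{1}{2^d}\sum_{S}(\det A_S)^2\prod_{k\in S} d_k(v), \]
where $S$ ranges over the $d$-element subsets of $[n]$ and $A_S$ is the corresponding maximal square submatrix of $A$. Regarded as a polynomial in the formal symbols $d_1,\dots,d_n$, the right-hand side is nonzero, since ${\rm rank}(A)=d$ guarantees $\det A_S\neq 0$ for at least one $S$.

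The hard part will be ruling out cancellation once the Laurent expressions $d_k(v)$ are substituted for the formal symbols $d_k$, since distinct subsets $S$ may contribute overlapping monomials in $v$. I would handle this by exploiting the free parameters $\beta$: fix any $v_0\in(\mathbb{C}^*)^d$ and consider the map $\beta\mapsto(d_1(v_0,\beta),\dots,d_n(v_0,\beta))$. Its $k$-th coordinate depends only on $\beta_k$, through $\beta_k\mapsto \beta_k v_0^{a_k}-\beta_k^{-1}v_0^{-a_k}$, and this one-variable map is surjective onto $\mathbb{C}$ (solving $c\,\beta_k^2-w\,\beta_k-c^{-1}=0$ with $c=v_0^{a_k}$ always yields a root $\beta_k\in\mathbb{C}^*$). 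Hence the full map is dominant onto $\mathbb{C}^n$, and a nonzero polynomial in $d_1,\dots,d_n$ cannot vanish on its dense image. This shows $\det J_{A,b}(v_0,\beta)\not\equiv 0$ in $\beta$, and therefore $\det J_{A,b}\not\equiv 0$ as a Laurent polynomial in $(\beta,v)$, which is the first assertion.

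Finally I would pass back to fixed $\beta$. Writing $\det J_{A,b}$ as a Laurent polynomial in $v$ with coefficients in $\mathbb{C}[\beta^{\pm 1}]$, the previous step says these coefficients are not all zero, so the set of $\beta$ annihilating all of them is a proper Zariski-closed subset of $(\mathbb{C}^*)^n$. For $\beta$ outside it, $\det J_{A,b}(\cdot)$ is a nonzero Laurent polynomial in $v$; hence $F_\beta$ has invertible differential at a generic $v$ and is dominant, so ${\rm pr}_\omega^{-1}(\omega)\cong F_\beta^{-1}(\omega)$ is finite for generic $\omega$, completing the proof.
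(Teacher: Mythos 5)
Your argument is correct, and its overall skeleton coincides with the paper's: establish that $\det J_{A,b}$ is not the zero Laurent polynomial in $(\beta,v)$, conclude that for generic $\beta$ it is nonzero as a Laurent polynomial in $v$, and deduce finiteness of the generic fiber from the resulting dominance of a morphism between irreducible $d$-dimensional varieties (the paper phrases this last step via an isolated point in one fiber and upper semicontinuity of fiber dimension, citing Mumford, which is equivalent to your dominance argument). The one place where you genuinely diverge is the non-vanishing step. The paper simply evaluates at the single point $v=\mathbf{1}$, $\beta=-i\cdot\mathbf{1}$, where the diagonal matrix becomes $-2i\,{\rm id}$ and $\det J_{A,\mathbf{1}}(\mathbf{1})=\det(-i\,A A^t)\neq 0$ by full rank of $A$ --- a one-line certificate. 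You instead expand $\det\bigl(\tfrac12 A D(v)A^t\bigr)$ by Cauchy--Binet as $2^{-d}\sum_S(\det A_S)^2\prod_{k\in S}d_k(v)$ and then kill possible cancellation by observing that, for fixed $v_0$, the map $\beta\mapsto(d_1,\dots,d_n)$ is surjective onto $\mathbb{C}^n$ (each coordinate is a surjective one-variable map $\beta_k\mapsto c\beta_k-c^{-1}\beta_k^{-1}$ with both roots of the associated quadratic nonzero). This is heavier but buys a slightly stronger statement: $\det J_{A,b}(v_0,\cdot)\not\equiv 0$ in $\beta$ for \emph{every} $v_0\in(\mathbb{C}^*)^d$, not just for one witness point, and it exposes the combinatorial structure $\sum_S(\det A_S)^2\prod_{k\in S}d_k$ of the toric Jacobian, which could be of independent use (e.g.\ for the degree bound on the discriminant). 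For the lemma as stated, the paper's single evaluation suffices and is shorter.
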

\begin{proof}
    For $v = {\bf 1} \in (\mathbb{C}^*)^d$ and $\beta = -i \cdot {\bf 1} \in (\mathbb{C}^*)^n$, we have $\det J_{A, {\bf 1} }({\bf 1}) = \det(-i \, A \, A^t) \neq 0$ because $A$ has full rank. This shows the first statement. For the second part of the lemma, since $\beta$ is generic we may assume that $\det J_{A,b}(v)$ is not identically zero as a Laurent polynomial in $v$. Pick $v_0 \in (\mathbb{C}^*)^d$ such that $\det J_{A,b}(v_0) \neq 0$ and let $\omega_0 = A \psi_{A,b}(v_0)$. By construction, $v_0$ is isolated in ${\rm pr}_\omega^{-1}(\omega_0)$. Since ${\rm pr}_{\omega}$ is a dominant morphism of irreducible $d$-dimensional varieties, this implies that its generic fiber is finite \cite[Chapter 1, \S 8, Theorem 2 and Corollary 1]{mumford1999red}.  
\end{proof}

\begin{remark} For fixed $b$, the toric Jacobian $\det J_{A,b}$ might be identically zero. This happens for $A = \begin{pmatrix} 1 & 1 \end{pmatrix}$ when $\beta_1 + \beta_2 = 0$. The equations \eqref{eq:kuramoto_v_eqs_withbeta} have no solutions for generic $\omega$. 
\end{remark}

\begin{definition} \label{def:lissajousdisc}
    The \emph{ramification locus} $R_{A,b} \subseteq W_{A,b}$ is the divisor 
    \[ R_{A, b} \, = \, \{(v, \omega) \in W_{A,b} \, : \, \det J_{A,b}(v) = 0 \}. \]
    The \emph{Lissajous discriminant} $\nabla_{A,b}$ is the associated branch locus: $\nabla_{A,b} = \overline{{\rm pr}_\omega(R_{A,b})} \subseteq \mathbb{C}^d$. If $\nabla_{A,b}$ is a hypersurface, then its defining equation is denoted by $\Delta_{A,b} \in \mathbb{C}[\omega_1, \ldots, \omega_d]$. If $\nabla_{A,b}$ has codimension greater than one, then we set $\Delta_{A,b} = 1$. 
\end{definition}
Note that the polynomial $\Delta_{A,b}$ is defined up to scaling by a nonzero complex number. 

\begin{remark}
    The Lissajous discriminant can be viewed as the branch locus of the linear projection of the Lissajous variety ${\cal L}_{A,b}$ given by the matrix $A$. It is an analog of the \emph{entropic discriminant} \cite{sanyal2013entropic}, for which ${\cal L}_{A,b}$ is replaced by the
     reciprocal linear space of ${\rm Row}(A)$. 
\end{remark}

By definition, the Lissajous discriminant is the variety of the elimination ideal
\begin{equation}\label{eq:computediscriminant}
    \langle \, A \, \psi_{A,b}(v)\, -\, \omega,\, \det(J_{A, b})\, \rangle\cap\C[\omega].
\end{equation}
Here we start from an ideal with $d + 1$ generators in $\mathbb{C}[v^{\pm 1}, \omega]$. We compute three examples.
\begin{example} \label{ex:discrunning}
    For $A=\begin{pmatrix} 1 & 2 \end{pmatrix} \in \mathbb{Z}^{1 \times 2}$, \Cref{eq:computediscriminant} reads
    \[
    \langle \beta v+\beta^{-1}v^{-1}+2(\beta^2v^2+\beta^{-2}v^{-2})-2 \, \omega, \beta v-\beta^{-1}v^{-1}+4(\beta^2v^2-\beta^{-2}v^{-2})\rangle \cap \C[\omega].
    \]
    Setting $b = 1$, $\beta=-i$, we compute $\Delta_{A,1}=256\omega^4 - 2367\omega^2 + 3375$, which has four real roots. A root $\omega^{(j)}$ of $\Delta_{A,1}$ corresponds to a tangent line $x + 2y = \omega^{(j)}$ of ${\cal S}_A = {\cal L}_{A,1}$; see Figure \ref{fig:4tg8fig}.~For $b=0$, the discriminant is $\Delta_{A, 0}=16\omega^3 - 31\omega^2 - 84\omega + 99 =(\omega - 1) (\omega - 3) (16\omega + 33)$.
    \begin{figure}
        \centering        \includegraphics[height = 4.5cm]{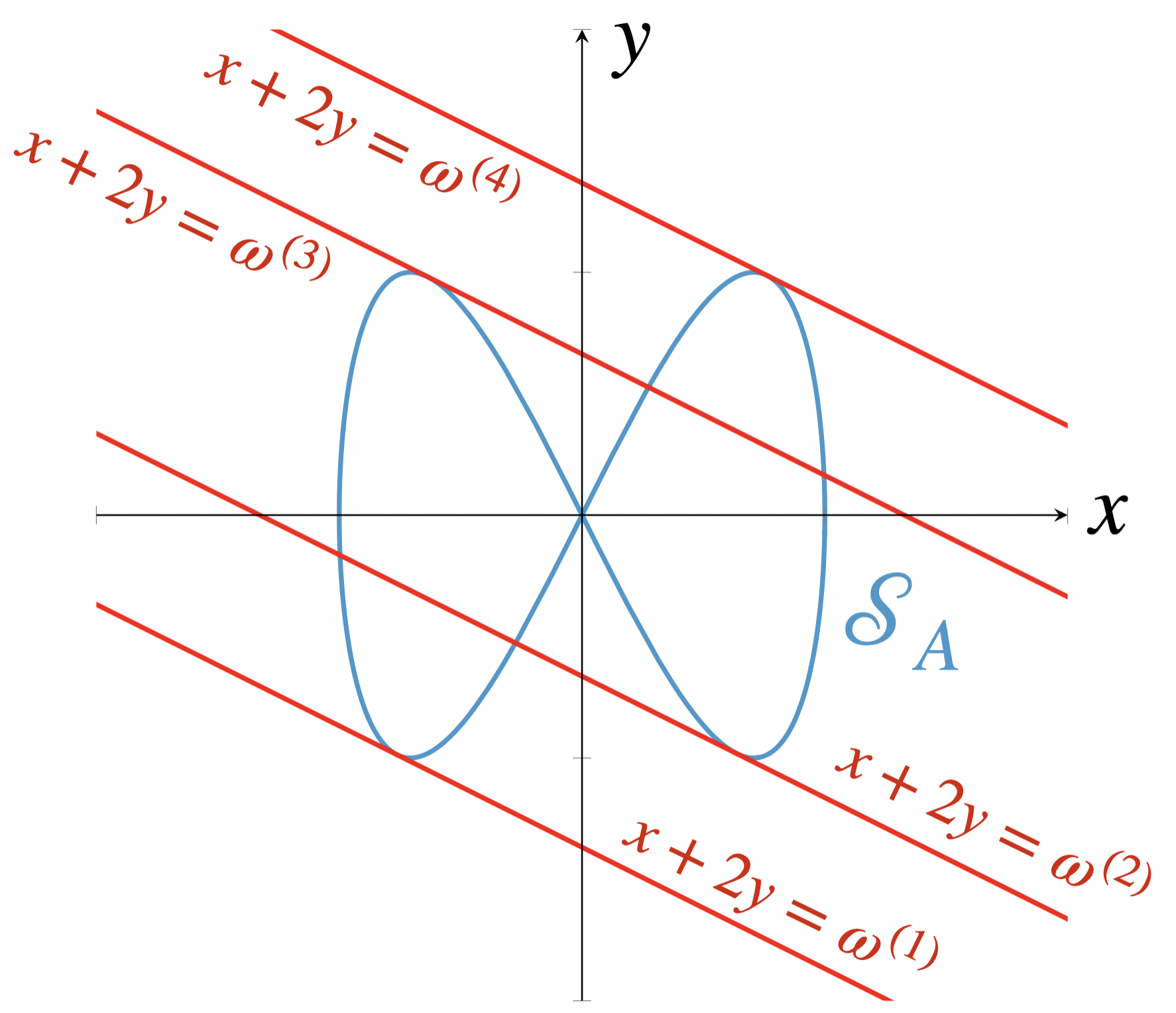}
        \caption{When $A=\begin{pmatrix} 1 & 2 \end{pmatrix}$, each of the four real roots $\omega^{(1)},\dotsc,\omega^{(4)}$ of $\Delta_{A,1}$ corresponds to a tangent line $x + 2y = \omega^{(j)}$ (in red) of ${\cal S}_A$ (in blue).}
        \label{fig:4tg8fig}
    \end{figure}
\end{example}

\begin{example} \label{ex:bifur}
    As mentioned above, studying the Lissajous discriminant of the matrix $A$ is closely related to the bifurcation analysis of the dynamical system \eqref{eq:ourODE}. Indeed, in view of linear stability analysis, an eigenvalue of the Jacobian matrix evaluated at a critical point can only change sign if its determinant vanishes. This happens when $\omega$ lies on the discriminant. For instance, the ramification locus $R_{A,b}$ in Example \ref{ex:dynsyscircle} is defined by the equations 
    \[ (v+v^{-1}) - i(v-v^{-1}) - 2 \, \omega \, = \, v-v^{-1} - i(v+v^{-1}) \, = \, 0, \]
    from which we see that $\nabla_{A,b} = \{ \pm \sqrt{2} \}$. These discriminant points correspond to the two red lines $\{ x + y = \pm \sqrt{2}\}$ tangent to the circle ${\cal L}_{A,b}(\mathbb{R})$, see Figure \ref{fig:circle}. The bounded discriminant chamber $(- \sqrt{2}, \sqrt{2})$ contains ${\cal L}_{A,b}^+ = (-1,1)$, and for $\omega$ in that chamber the dynamical system has one stable and one unstable equilibrium. For $\omega^2 > 2$, the system is unstable. 
\end{example}

\begin{example}\label{ex:C3discriminants}
    Let $A$ be as in Example \ref{ex:elliptopeintro}. 
    We compute the Lissajous discriminants of ${\cal L}_{A,b}$ for $b=\bf 0$ and $b=\bf1$. The curve $\nabla_{A,\bf 0}$ has degree $6$, and $\nabla_{A,\bf 1}$ has degree $12$. The equations~are 
    \begin{align*}
    \Delta_{A,\bf 0} = &-8\, \omega_1^5 + 4\, \omega_1^4\, \omega_2^2 - 20\, \omega_1^4\, \omega_2 - 23\, \omega_1^4 + 8\, \omega_1^3\, \omega_2^3 - 8\, \omega_1^3\, \omega_2^2 - 46\, \omega_1^3\, \omega_2 + 4\, \omega_1^3 + 4\, \omega_1^2\, \omega_2^4 \\ & + 8\, \omega_1^2\, \omega_2^3 - 69\, \omega_1^2\, \omega_2^2 + 6\, \omega_1^2\, \omega_2 + 36\, \omega_1^2 + 20\, \omega_1\, \omega_2^4 - 46\, \omega_1\, \omega_2^3 - 6\, \omega_1\, \omega_2^2 + 36\, \omega_1\, \omega_2 \\ & + 8\, \omega_2^5 - 23\, \omega_2^4 - 4\, \omega_2^3 + 36\, \omega_2^2,\\
    \Delta_{A,\bf 1} = & \, \, 64\, e_2^5 + 399 e_2^4 + 840\, e_2^3 + 376\, e_2^2e_3^2 + 766\, e_2^2 + 3056\, e_2 e_3^2 + 288\, e_2 
  - 16 e_3^4 + 5812\, e_3^2 + 27,
  \end{align*}
  where $e_2 = \omega_1\omega_2 + \omega_1 \omega_3 + \omega_2 \omega_3$, $e_3 = \omega_1\omega_2 \omega_3$ are the elementary symmetric polynomials and $\omega_3 = -\omega_1 - \omega_2$. We will explain the symmetric structure of $\Delta_{A,\bf 1}$ in Proposition \ref{prop:symmetry}. 
    Figure \ref{fig:C3discriminant} shows our two discriminant curves. They are the branch loci of the projection of Figure \ref{fig:C3} given by $A$. The reference \cite[Section 5.3]{bernal2023machine} studies  $\nabla_{A,\bf 1}$ via machine learning techniques. 

    Different regions in Figure \ref{fig:C3discriminant} correspond to different numbers of real solutions to \eqref{eq:kuramoto_v_eqs_withbeta}, or different numbers of real intersection points in ${\cal L}_{A,b} \cap \{Ax = \omega\}$.  For $\omega$ in the connected component of $\mathbb{R}^2 \setminus \nabla_{A,\bf 0}$ highlighted in orange, the line $Ax=\omega$ intersects ${\cal C}_A$ in three real points. In all other connected components, there is only one real intersection point. On the right, the green, blue and red components correspond to $\omega$ with six, four and two real intersection points respectively. We note that the convex hull of the real points of $\nabla_{A,\bf 1}$ contains the blue region $\Omega^+$ in Figure \ref{fig:SA+andOmega+} (right), but the two do not coincide. Figure \ref{fig:comparedisc} shows a comparison. This is similar to the inclusion $ (-1,1) \subsetneq (-\sqrt{2},\sqrt{2})$ in Example \ref{ex:bifur}. 
    \begin{figure}[h!]
    \centering
    \includegraphics[height = 4.5cm]{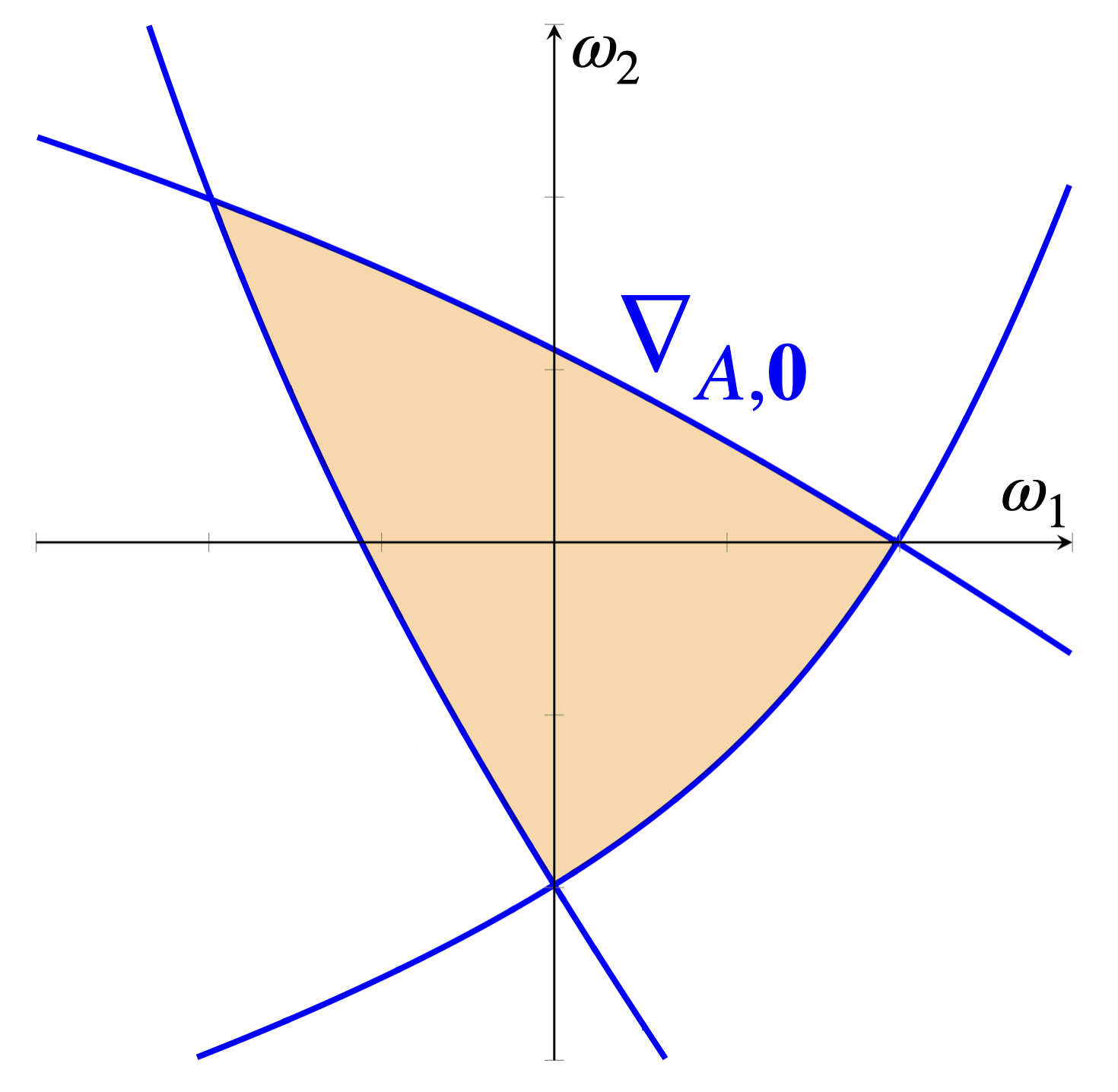} \quad \quad \quad 
    \includegraphics[height = 4.5cm]{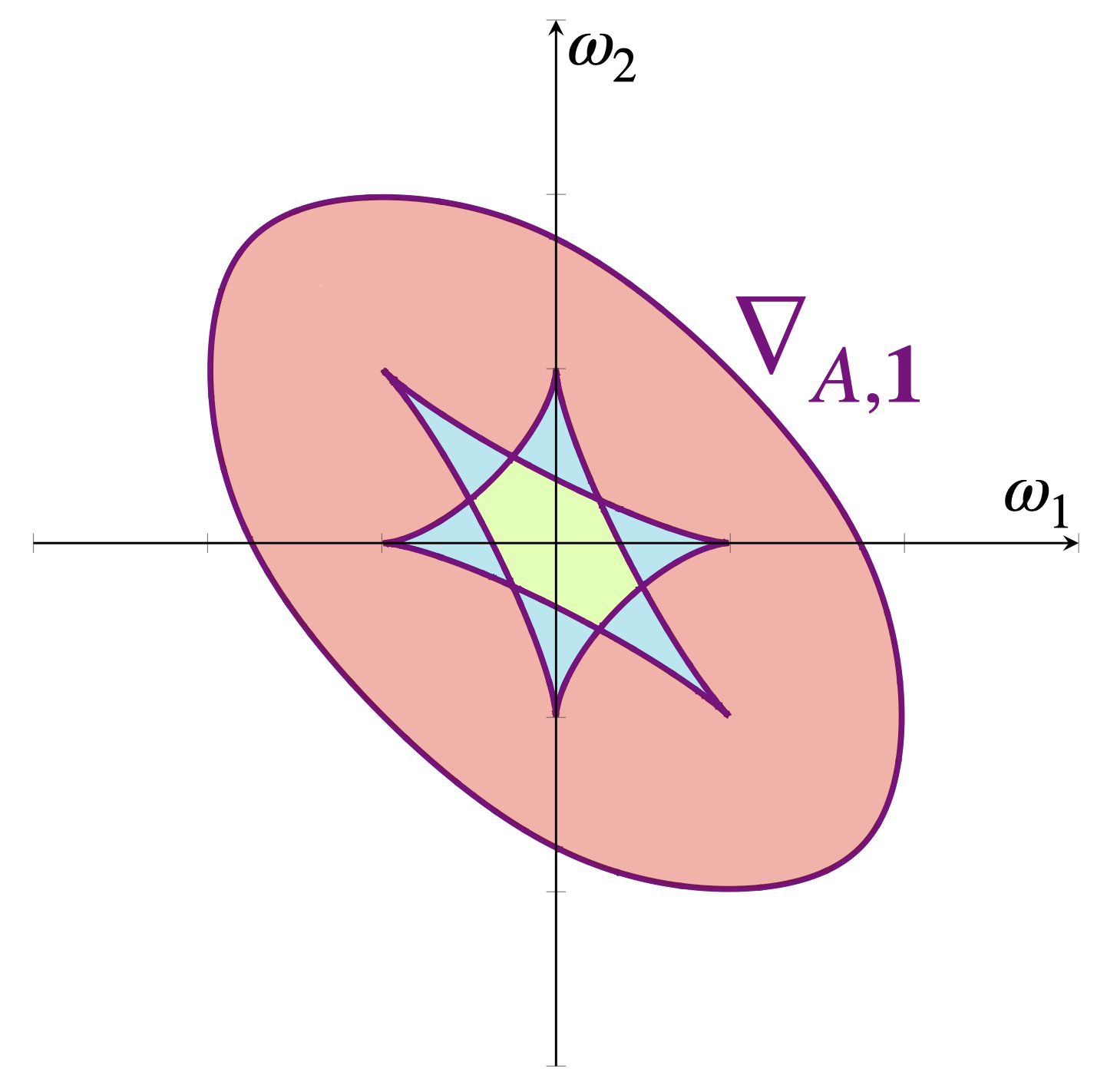}
    \caption{Lissajous discriminants from Example \ref{ex:discrunning}. }
    \label{fig:C3discriminant}
    \end{figure}

    \begin{figure}
        \centering
        \includegraphics[width=0.3\linewidth]{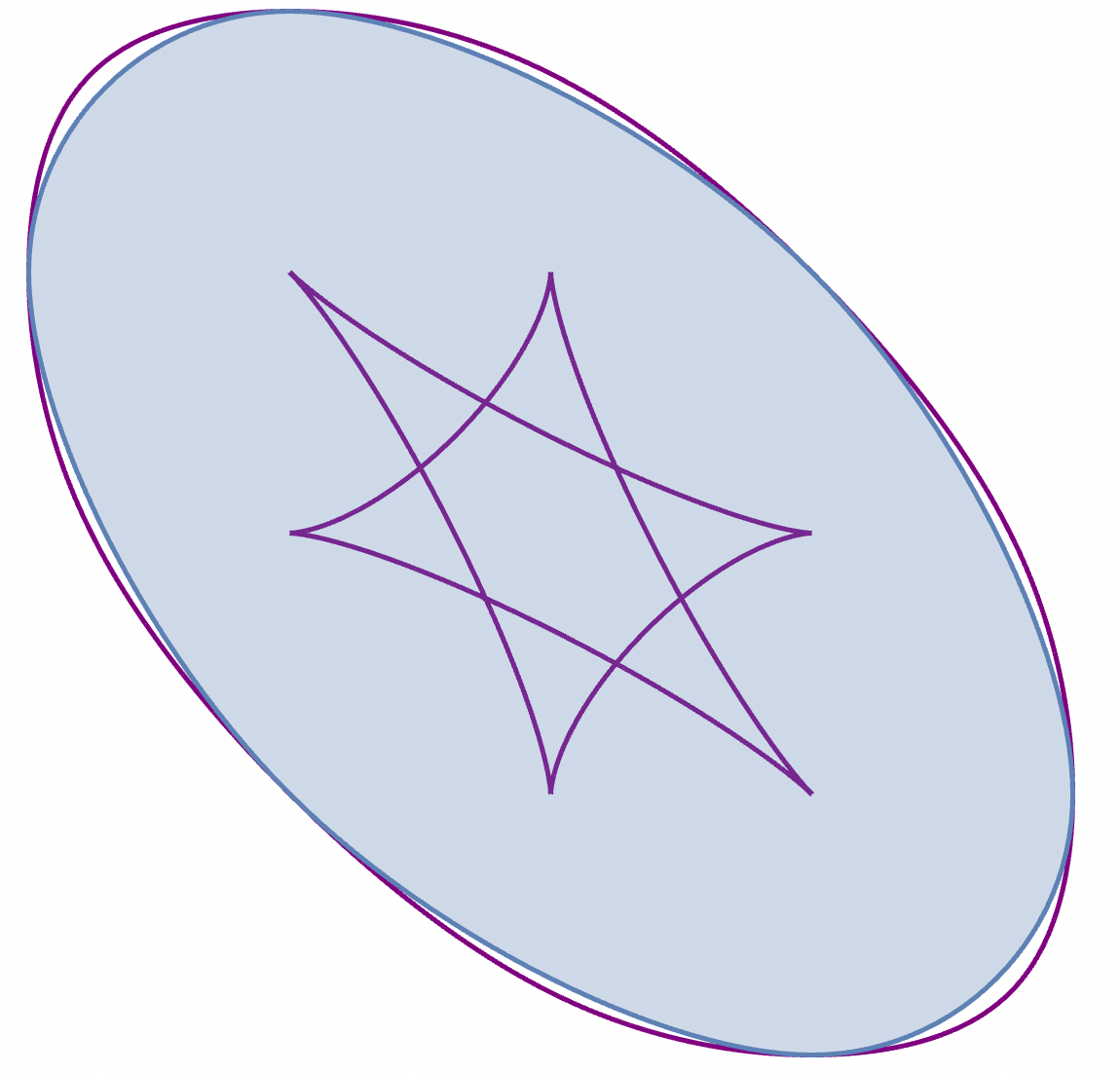}
        \caption{Comparing $\Omega^+$ (blue) with the Lissajous discriminant (purple).}
        \label{fig:comparedisc}
    \end{figure}
\end{example}

\begin{theorem} \label{thm:degdisc}
    If the Lissajous discriminant variety $\nabla_{A,b}$ is a hypersurface, then it has degree at most $(\deg {\rm pr}_\omega)^{-1} \cdot d \cdot d! \, {\rm vol}(P_A)$, where ${\rm pr}_\omega$ denotes the restriction $({\rm pr}_{\omega})_{|R_{A,b}}$. The number ${\rm vol}(P_A)$ is as in Theorem \ref{thm:degree}.
\end{theorem}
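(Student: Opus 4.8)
The plan is to realise $\deg\nabla_{A,b}$ as an intersection number and push the computation into the torus $W_{A,b}\cong(\mathbb{C}^*)^d$, where the toric Jacobian supplies explicit equations. Fix a generic affine line $\ell\subset\mathbb{C}^d$ in the $\omega$-space; since $\nabla_{A,b}$ is a hypersurface, $\deg\nabla_{A,b}=\#(\nabla_{A,b}\cap\ell)$. Let $C={\rm pr}_\omega^{-1}(\ell)\subset W_{A,b}$. The restriction ${\rm pr}_\omega|_C\colon C\to\ell$ is a branched covering whose branch points are exactly $\nabla_{A,b}\cap\ell$ and whose ramification points are exactly $C\cap R_{A,b}$, because $R_{A,b}=\{\det J_{A,b}=0\}$ is precisely the locus where the differential of ${\rm pr}_\omega$ degenerates. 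For generic $\ell$ the branching is simple, so $\#(\nabla_{A,b}\cap\ell)$ equals $\#(C\cap R_{A,b})$ divided by the number of ramification points lying over a single branch point. Identifying this folding number with $\deg{\rm pr}_\omega$ is what produces the factor $(\deg{\rm pr}_\omega)^{-1}$; granting it, the theorem reduces to the bound $\#(C\cap R_{A,b})\le d\cdot d!\,{\rm vol}(P_A)$.

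To obtain that bound I would set up $C\cap R_{A,b}$ as a square Laurent system in $(\mathbb{C}^*)^d$ and apply a Bernstein--Kushnirenko count. The $d-1$ linear equations cutting out $\ell$, pulled back along $\omega=A\,\psi_{A,b}(v)$, are $\mathbb{C}$-linear combinations of the entries $\tfrac12(\beta_jv^{a_j}+\beta_j^{-1}v^{-a_j})$; each has Newton polytope contained in $P_A={\rm conv}\{\pm a_1,\dots,\pm a_n\}$, and for generic $\ell$ equal to $P_A$. The remaining equation is $\det J_{A,b}=0$. Using $J_{A,b}=\tfrac12\,A\,D(v)\,A^t$ with $D(v)={\rm diag}(\beta_jv^{a_j}-\beta_j^{-1}v^{-a_j})$ and the Cauchy--Binet formula, I get
\[
\det J_{A,b}=2^{-d}\sum_{|S|=d}(\det A_S)^2\prod_{j\in S}\bigl(\beta_jv^{a_j}-\beta_j^{-1}v^{-a_j}\bigr),
\]
where $A_S$ is the $d\times d$ submatrix of $A$ on the columns indexed by $S$. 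Every monomial here is $v^{\sum_{j\in S}\varepsilon_j a_j}$ with $\varepsilon_j\in\{\pm1\}$, i.e.\ a sum of $d$ vertices of $P_A$, so the Newton polytope $Q$ of $\det J_{A,b}$ satisfies $Q\subseteq d\,P_A$.

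With these supports in hand, the Bernstein--Kushnirenko theorem bounds $\#(C\cap R_{A,b})$ by the mixed volume ${\rm MV}(\underbrace{P_A,\dots,P_A}_{d-1},\,Q)$, which by monotonicity is at most ${\rm MV}(\underbrace{P_A,\dots,P_A}_{d-1},\,d\,P_A)$. Multilinearity of mixed volume together with the normalisation ${\rm MV}(P_A,\dots,P_A)=d!\,{\rm vol}(P_A)$ gives ${\rm MV}(\underbrace{P_A,\dots,P_A}_{d-1},\,d\,P_A)=d\cdot d!\,{\rm vol}(P_A)$. Combining this with the first paragraph yields $\deg\nabla_{A,b}\le(\deg{\rm pr}_\omega)^{-1}\,d\cdot d!\,{\rm vol}(P_A)$, as claimed.

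The step I expect to be the main obstacle is the folding factor in the first paragraph: simple branching alone produces only one ramification point over each branch point, so recovering a $\deg{\rm pr}_\omega$-fold identification requires exhibiting the deck symmetries of ${\rm pr}_\omega$ acting on $R_{A,b}$ --- in particular the involution $v\mapsto\lambda v^{-1}$ with $\lambda^{a_j}=\beta_j^{-2}$ (when this is solvable) --- and carefully analysing its fixed locus on the ramification divisor. A secondary point is that Bernstein--Kushnirenko yields only an upper bound unless the system is non-degenerate with respect to the relevant polytopes; since $\det J_{A,b}$ need not meet every vertex of $d\,P_A$ and the pulled-back line equations are special, strict inequality can occur, which is consistent with the ``at most'' in the statement, and checking facial non-degeneracy (in the spirit of the proof of Theorem \ref{thm:degree}) is the remaining technical verification.
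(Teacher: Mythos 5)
Your proof follows essentially the same route as the paper's: intersect $\nabla_{A,b}$ with a generic line, bound the resulting ramification points by a Bernstein--Kushnirenko count with Newton polytopes contained in $P_A$ and $d\cdot P_A$, and divide by $\deg {\rm pr}_\omega$ using the fact that ramification points come in full fibers of ${\rm pr}_\omega$. The only differences are cosmetic: you cut the line by $d-1$ hyperplanes and compute ${\rm MV}(P_A,\dots,P_A,\,d\,P_A)=d\cdot d!\,{\rm vol}(P_A)$ directly in $(\mathbb{C}^*)^d$, whereas the paper keeps the line parameter $t$ as an extra variable and computes the equivalent mixed volume involving the pyramid $\hat P_A={\rm conv}(P_A\cup e_{d+1})$; your Cauchy--Binet verification that the Newton polytope of $\det J_{A,b}$ lies in $d\,P_A$ makes explicit a containment the paper only asserts, and the ``folding factor'' you flag as the main obstacle is handled in the paper by the same (essentially unproved) assertion that the fiber over a point of the branch locus consists entirely of ramification points.
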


\begin{proof}
    For generic $\omega_0, \omega_1 \in \mathbb{C}^d$, the degree of $\nabla_{A,b}$ is the number of intersection points of $\nabla_{A,b}$ with the line parametrized by $\omega_0 + t \, \omega_1$. This is at most the number of solutions $(v,t) \in (\mathbb{C}^*)^{d+1}$ to the system of $d+1$ Laurent polynomial equations 
    \begin{equation} \label{eq:systemdisc} A \, \psi_{A,b}(v) \, = \, \omega_0 + t \, \omega_1, \quad \det J_{A,b}(v) \, = \, 0.\end{equation}
    Let us denote this number by $\delta$. In fact, it is clear that if $(v,t)$ is a solution to these equations, then so is $(v',t)$ for any $v' \in {\rm pr}_\omega^{-1}(\omega_0 + t \, \omega_1)$. Hence, we have $\deg (\nabla_{A,b}) = (\deg {\rm pr}_\omega)^{-1} \cdot \delta$.  
    
    By Bernstein's theorem, the number $\delta$ is bounded by the mixed volume of the $d+1$ Newton polytopes of the equations \cite{bernstein1975number}. The Newton polytope of each of the first $d$ equations is contained in the $(d+1)$-dimensional polytope $\hat{P}_A = {\rm conv}(P_A \cup e_{d+1})$. Here we add the $(d+1)$-st standard basis vector to $P_A$ because of the term $t \, \omega_1$. The Newton polytope of the toric Jacobian determinant $\det J_{A,b}(v)$ is contained in the $d$-dimensional polytope $d \cdot P_A$. Hence, we have $\delta \leq {\rm MV}(\hat{P}_A, \ldots, \hat{P}_A, d \cdot P_A)$, where $\hat{P}_A$ is repeated $d$ times and ${\rm MV}(\cdot)$ denotes the mixed volume. By definition, this mixed volume equals the coefficient of $\lambda_0^d \lambda_1$ in ${\rm vol}(\lambda_0 \hat{P}_A + \lambda_1 dP_A)$. The Minkowski sum $\lambda_0 \hat{P}_A + \lambda_1 dP_A$ is obtained from the pyramid $(\lambda_0 + d\lambda_1) \hat{P}_A$ with volume $(\lambda_0 + d\lambda_1)^{d+1}(d+1)^{-1} {\rm vol}(P_A)$ by ``chopping off'' the top of the pyramid  $\lambda_0 e_{d+1} + d\lambda_1 \hat{P}_A$ with volume $(d\lambda_1)^{d+1}(d+1)^{-1} {\rm vol}(P_A)$. The coefficient standing with $\lambda_0^d\lambda_1$ in the difference of these expressions is $d \cdot d! {\rm vol}(P_A)$. We have shown that $\delta \leq d \cdot d!{\rm vol}(P_A)$, and since $\deg (\nabla_{A,b}) = (\deg {\rm pr}_\omega)^{-1} \cdot \delta$ the theorem follows. 
\end{proof}

\begin{example} \label{ex:experiment}
    The degree of the Lissajous discriminant can be computed by solving \eqref{eq:systemdisc} for generic $\omega_0, \omega_1$. There are $\delta$ solutions, whose image under ${\rm pr}_\omega$ consists of $\deg \nabla_{A,b}$ many points. To compute all $\delta$ solutions, we use the package \texttt{HomotopyContinuation.jl} (v2.15.0) \cite{HomotopyContinuation.jl} in Julia (v1.11). To compute the lattice volume $d! \, {\rm vol}(P_A)$, we use the function \texttt{lattice\_volume} from \texttt{Oscar.jl} (v1.1.2) \cite{OSCAR-book,OSCAR}. We apply these methods for matrices $A$ coming from cyclic and complete graphs. The results are reported in Table \ref{tab:Gdeg}. The column $d \cdot d! \, {\rm vol}(P_A)$ is computed using the command \texttt{d*lattice\_volume(convex\_hull(transpose([A -A])))}. By \cite[Theorem 14]{chen2018counting}, these numbers for $C_n$ are given by the sequence $n(n-1) \binom{n-1}{\lfloor \tfrac{1}{2} (n-1) \rfloor}$. The other entries can be reproduced using the following Julia code snippet, which uses standard~tools: 
\begin{minted}[fontsize=\footnotesize]{julia}
using HomotopyContinuation
A = ... # input a matrix A of full row rank, e.g., A = [1 0 -1; -1 1 0]
b = ... # input a vector b, e.g., b = [0;0;0]
d, n = size(A)
β = exp.(-im*b*pi/2); βinv = β.^(-1);
@var v[1:d] w[1:d] t # declare variables
Ad = [[maximum([aa,0]) for aa in A];-[minimum([aa,0]) for aa in A]]
y = [prod([v;w].^Ad[:,i]) for i = 1:n]
yinv = [prod([w;v].^Ad[:,i]) for i = 1:n]
ψ = [1//2*(β[i]*y[i] + βinv[i]*yinv[i]) for i = 1:n]
D = det(A*diagm([1//2*(β[i]*y[i]-βinv[i]*yinv[i]) for i = 1:n])*transpose(A))
myω0 = randn(ComplexF64,d); myω1 = randn(ComplexF64,d);
eqs = System([A*ψ - myω0+t*myω1; [v[i]*w[i]-1 for i = 1:d]; D], variables = [v;w;t])
R = HomotopyContinuation.solve(eqs)
δ = length(solutions(R))
degdisc = length(unique_points([[sol[end]] for sol in solutions(R)]))
degpr = δ/degdisc 
\end{minted}
\normalsize
The variables in line 6 are our variables $v_1, \ldots, v_d$ and the variable $t$ in Equation \ref{eq:systemdisc}. The variables $w_k$ play the role of the inverses of the $v_k$, as in Section \ref{sec:2}. The columns of the matrix \texttt{Ad} in line 7 are the concatenations of nonnegative integer vectors $a_j^+$ and $a_j^{-}$ satisfying $a_j = a_j^+ - a_j^-$. Lines 8-13 construct the system of equations \eqref{eq:systemdisc}, and line 14 solves it.  
We emphasize that numerical homotopy continuation methods are based on heuristics and do not provide a proof that the numbers in the table are correct. However, we expect they are. Our goal here is to exemplify the (non-)tightness of the bound in Theorem~\ref{thm:degdisc}. 
\begin{table}[h!]
\centering
\footnotesize
\begin{tabular}{c|c|cc|cc|cc}
$A$ & $d\cdot d!{\rm vol}(P_A)$ 
& \multicolumn{2}{c}{$b$ generic} 
& \multicolumn{2}{c}{$b=\mathbf{0}$} 
& \multicolumn{2}{c}{$b=\mathbf{1}$} \\
& & $\deg(\nabla_{A,b})$ & $\deg(\mathrm{pr}_\omega)$
  & $\deg(\nabla_{A,\bf 0})$ & $\deg(\mathrm{pr}_\omega)$
  & $\deg(\nabla_{A,\bf 1})$ & $\deg(\mathrm{pr}_\omega)$ \\
\hline
$A(C_3)$ & $12$ & $12 $ & $1$ & $6$ & $2$ & $12$ & $1$ \\
$A(C_4)$ & $36$ & $36$ & $1$& $\bf 12$ & $2$ & $\bf 12$ & $2$\\
$A(C_5)$ & $120$ & $120$ & $1$ & $60$ & $2$& $120$ & $1$\\
$A(C_6)$ & $300$ & $300$  & $1$ & $\bf 130$  & $2$ & $150$ & $2$\\
$A(C_7)$ & $840$ & $ 840$ & $1$ & $420$ & $2$ &  $840$ & $1$\\
$A(C_8)$ & $1960$ & $1960$ & $1$ & $\bf 910$ & $2$  & $\bf 910$ & $2$ \\ \hline 
$A(K_4)$ & $60$ & $60$ & $1$ & $\bf 26$ & $2$ & $\bf 48$ & $1$ \\
$A(K_5)$ & $280$ & $280$ & $1$ & $\bf 90$ & $2$ & $\bf 140$ & $1$
\\
$A(K_6)$ & $1260$ & $ 1260$ & $1$ & $\bf 276$ & $2$ & $\bf 360$ & $1$
\end{tabular}
\caption{$\deg(\nabla_{A,b})$ for different reduced incidence matrices $A = A(G)$. Numbers in bold indicate that the upper bound from Theorem \ref{thm:degdisc} is \emph{not} attained.}
\label{tab:Gdeg}
\end{table}
\end{example}

Theorem \ref{thm:degdisc} and Example \ref{ex:experiment} show that Lissajous discriminants may have large degrees, which makes them challenging to compute. In some cases, the polynomial $\Delta_{A,b}$ exhibits some symmetries, and this can be exploited in the computation. Here is an example.

\begin{proposition}\label{prop:evendegdiscriminant}
    The discriminant $\Delta_{A,\bf 1}$ satisfies $\Delta_{A,\bf 1}(\omega) = \alpha \Delta_{A, \bf 1}(-\omega)$, where $\alpha = \pm 1$. In particular, the degree of each monomial appearing in $\Delta_{A,\bf 1}$ is even if $\alpha = 1$, or odd if~$\alpha = -1$. 
\end{proposition}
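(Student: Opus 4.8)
The plan is to exploit the involution on the incidence variety $W_{A,\mathbf{1}}$ induced by inverting the torus coordinates. When $b=\mathbf{1}$ we have $\beta=-i\cdot\mathbf{1}$, so $\beta_j^{-1}=i$ and the parametrization reads $\psi_{A,\mathbf{1}}(v)_j=\tfrac{1}{2}(-i\,v^{a_j}+i\,v^{-a_j})$. The first thing I would record is that this map is \emph{odd} under the torus inversion $\iota\colon v\mapsto v^{-1}$ (meaning $v_k\mapsto v_k^{-1}$ for all $k$): since $\iota$ swaps $v^{a_j}$ and $v^{-a_j}$, one checks at once that $\psi_{A,\mathbf{1}}(v^{-1})=-\psi_{A,\mathbf{1}}(v)$, and hence $A\,\psi_{A,\mathbf{1}}(v^{-1})=-A\,\psi_{A,\mathbf{1}}(v)$. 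Consequently $\sigma(v,\omega)=(v^{-1},-\omega)$ is a well-defined involution of $W_{A,\mathbf{1}}$.

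Next I would check that $\sigma$ also preserves the ramification locus $R_{A,\mathbf{1}}$. The relevant point is that, in contrast to $\psi_{A,\mathbf{1}}$, the toric Jacobian is \emph{even} under $\iota$. Indeed, for $b=\mathbf{1}$ the $j$-th diagonal entry of $D(v)$ is $\beta_j v^{a_j}-\beta_j^{-1}v^{-a_j}=-i(v^{a_j}+v^{-a_j})$, which is invariant under $v\mapsto v^{-1}$. Therefore $J_{A,\mathbf{1}}(v^{-1})=\tfrac{1}{2}A\,D(v^{-1})\,A^t=J_{A,\mathbf{1}}(v)$, so $\det J_{A,\mathbf{1}}$ is $\iota$-invariant. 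Combining the two observations, if $(v,\omega)\in R_{A,\mathbf{1}}$ then $(v^{-1},-\omega)\in W_{A,\mathbf{1}}$ and $\det J_{A,\mathbf{1}}(v^{-1})=\det J_{A,\mathbf{1}}(v)=0$, so $(v^{-1},-\omega)\in R_{A,\mathbf{1}}$. Applying ${\rm pr}_\omega$ and taking closures, this shows that the Lissajous discriminant is symmetric: $\omega\in\nabla_{A,\mathbf{1}}$ if and only if $-\omega\in\nabla_{A,\mathbf{1}}$, that is, $\nabla_{A,\mathbf{1}}=-\nabla_{A,\mathbf{1}}$.

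Finally I would pass from the symmetry of the variety to the symmetry of its equation. Since $\nabla_{A,\mathbf{1}}$ is a hypersurface, its squarefree defining polynomial $\Delta_{A,\mathbf{1}}$ is unique up to a nonzero scalar. The polynomial $\omega\mapsto\Delta_{A,\mathbf{1}}(-\omega)$ cuts out the same hypersurface $-\nabla_{A,\mathbf{1}}=\nabla_{A,\mathbf{1}}$ and is again squarefree, so $\Delta_{A,\mathbf{1}}(-\omega)=\lambda\,\Delta_{A,\mathbf{1}}(\omega)$ for some $\lambda\in\mathbb{C}^*$. Substituting $\omega\mapsto-\omega$ once more gives $\lambda^2=1$, hence $\lambda=\pm 1$; setting $\alpha=\lambda$ yields $\Delta_{A,\mathbf{1}}(\omega)=\alpha\,\Delta_{A,\mathbf{1}}(-\omega)$ with $\alpha=\pm 1$. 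Comparing coefficients of a monomial $\omega^m$ of total degree $|m|$, the relation forces $(-1)^{|m|}=\alpha$ whenever that monomial occurs, so every monomial has even degree when $\alpha=1$ and odd degree when $\alpha=-1$. The only genuinely delicate point is this last passage from the set-theoretic symmetry of $\nabla_{A,\mathbf{1}}$ to the scalar identity for $\Delta_{A,\mathbf{1}}$, which relies on $\Delta_{A,\mathbf{1}}$ being a scalar multiple of the reduced equation of the hypersurface. This is exactly how the discriminant is normalized in Definition \ref{def:lissajousdisc} through the radical elimination ideal, so no additional argument is needed there.
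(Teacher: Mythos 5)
Your proposal is correct and follows essentially the same route as the paper: the involution $v\mapsto v^{-1}$ sends $A\,\psi_{A,\mathbf{1}}(v)=\omega$, $\det J_{A,\mathbf{1}}(v)=0$ to the same system with $\omega$ replaced by $-\omega$, which forces $\Delta_{A,\mathbf{1}}(\omega)=\alpha\,\Delta_{A,\mathbf{1}}(-\omega)$ with $\alpha^2=1$, and the parity of the monomials follows by comparing coefficients. Your write-up merely makes explicit the two computations (oddness of $\psi_{A,\mathbf{1}}$ and evenness of $\det J_{A,\mathbf{1}}$ under inversion) that the paper compresses into a single displayed equivalence.
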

\begin{proof}
The Lissajous discriminant is stable under changing the sign of $\omega$. Indeed, we have 
\[ A \psi_{A,\bf 1}(v) = \omega \, \text{ and } \, \det J_{A,\bf 1}(v) = 0 \, \, \, \Longleftrightarrow \, \, \, A \psi_{A,\bf 1}(v^{-1}) = -\omega \, \text{ and } \, \det J_{A,\bf 1}(v^{-1}) = 0.\]
This implies that $\Delta_{A,\bf 1}(\omega)=\alpha \, \Delta_{A,\bf 1}(-\omega)$ for some $\alpha \in \mathbb{C}^*$. Changing signs twice, we find $\Delta_{A,\bf 1}(\omega)=\alpha \, \Delta_{A,\bf 1}(-\omega)=\alpha^2 \, \Delta_{A,\bf 1}(\omega)$, hence $\alpha=\pm1$.
Finally, write $\Delta_{A,\bf 1}=\Delta_{e} + \Delta_{o}$, where $\Delta_{e}$ is the sum of the monomials of $\Delta_{A,\bf 1}$ with even degree, and $\Delta_{o}$ contains those with odd degree. If $\alpha=1$, then $2\Delta_{o}=\Delta_{A,\bf 1}(\omega)-\Delta_{A,\bf 1}(-\omega)=0$. Similarly, if $\alpha = -1$, then $\Delta_e = 0$.
\end{proof}
 
If $A$ comes from a graph $G$, as in Section \ref{sec:4}, then the Lissajous discriminant respects the symmetries of $G$. We shall now make this more precise. 
Recall that an automorphism of $G$ is a permutation of its vertices which preserves adjacency. These constitute the automorphism group ${\rm Aut}(G)$, a subgroup of the symmetric group of order $n!$. For instance, the automorphism group of the complete graph $K_n$ is ${\rm Aut}(K_n) = S_n$, the full symmetric group. The automorphism group of the cycle graph $C_n$ is the dihedral group of order $2n$. 

Previously, we have worked with the reduced incidence matrix $A(G) \in \mathbb{Z}^{d \times n}$, which has full rank $d$. To describe the symmetries of the discriminant, it is more natural to work with the full incidence matrix $A_G \in \mathbb{Z}^{(d+1) \times n}$ of rank $d$ and consider the equivalent equations 
\[ A_G \, \psi_{A_G,\bf 1}(v) = \omega, \, \, v_{d+1} = 1 \quad \text{and} \quad  {\rm rank}(J_{A_G,b}(v)) < d. \]
Since the rows of $A_G$ sum to zero, these equations imply that $\omega_1 + \cdots + \omega_d + \omega_{d+1} = 0$. The Lissajous discriminant $\nabla_{A_G,\bf 1}$ is (expected to be) a hypersurface \emph{inside the hyperplane} $\omega_1 + \cdots + \omega_d + \omega_{d+1} = 0$. Its projection onto the first $d$ coordinates is $\nabla_{A(G),\bf 1}$. 

\begin{proposition} \label{prop:symmetry}
    Let $G$ be a graph with $m = d+1$ vertices and let $\sigma \in {\rm Aut}(G)$. We have $\omega \in \nabla_{A_G, \bf 1} \subset \mathbb{C}^{d+1}$ if and only if $\sigma(\omega) \in \nabla_{A_G,\bf 1}$, where $\sigma$ acts on $\omega$ by permuting coordinates.
\end{proposition}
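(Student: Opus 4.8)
The plan is to realize the graph automorphism $\sigma$ as a linear symmetry of the entire defining data of $\nabla_{A_G,\mathbf 1}$ and then transport solutions of the discriminant equations. The starting point is that $\sigma$ acts on the rows of $A_G$ by the vertex permutation matrix $P_\sigma$ (so that $\sigma(\omega)=P_\sigma\omega$), and on the columns of $A_G$ by a \emph{signed} permutation matrix $Q_\sigma$: since the fixed orientation of $G$ need not be $\sigma$-invariant, an edge $k\to j$ is sent to the edge on $\{\sigma(k),\sigma(j)\}$, possibly with its orientation reversed, which introduces a sign. Evaluating on a column $a_l=e_k-e_j$ gives $P_\sigma a_l = e_{\sigma(k)}-e_{\sigma(j)}=\pm a_{\pi(l)}$, yielding the key intertwining relation $P_\sigma A_G = A_G Q_\sigma$, with $Q_\sigma Q_\sigma^t = I$.

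Next I would verify that the two ingredients of the ramification locus transform covariantly. Writing $\beta=-i\cdot\mathbf 1$ (the value forced by $b=\mathbf 1$), each coordinate of $\psi_{A_G,\mathbf 1}$ is given by $z\mapsto\tfrac12(-iz+iz^{-1})$, which is \emph{odd} under $z\mapsto z^{-1}$, whereas the diagonal entries $-i(v^{a_l}+v^{-a_l})$ of $D(v)$ are \emph{even}. Because $\beta$ is the constant vector $-i\cdot\mathbf 1$, these parities absorb exactly the sign flips recorded in $Q_\sigma$, and a short computation using $(P_\sigma v)^{a_l}=v_{\sigma^{-1}(k)}/v_{\sigma^{-1}(j)}$ gives
\[ \psi_{A_G,\mathbf 1}(P_\sigma v) = Q_\sigma\,\psi_{A_G,\mathbf 1}(v), \qquad D(P_\sigma v)=Q_\sigma\,D(v)\,Q_\sigma^t. \]
Combining these with $P_\sigma A_G=A_G Q_\sigma$ shows $A_G\psi_{A_G,\mathbf 1}(P_\sigma v)=P_\sigma\omega$ whenever $A_G\psi_{A_G,\mathbf 1}(v)=\omega$, and
\[ J_{A_G,\mathbf 1}(P_\sigma v)=\tfrac12\,A_G Q_\sigma D(v) Q_\sigma^t A_G^t = P_\sigma\,J_{A_G,\mathbf 1}(v)\,P_\sigma^t, \]
so the toric Jacobian is conjugated by a permutation matrix and the condition ${\rm rank}\,J_{A_G,\mathbf 1}<d$ is preserved.

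With these identities in hand the conclusion is immediate. The assignment $(v,\omega)\mapsto(P_\sigma v,P_\sigma\omega)$ maps the ramification locus $R_{A_G,\mathbf 1}\subseteq W_{A_G,\mathbf 1}$ to itself and satisfies ${\rm pr}_\omega\circ(P_\sigma\cdot)=P_\sigma\circ{\rm pr}_\omega$; since $\nabla_{A_G,\mathbf 1}=\overline{{\rm pr}_\omega(R_{A_G,\mathbf 1})}$, it follows that $P_\sigma$ preserves $\nabla_{A_G,\mathbf 1}$, i.e.\ $\omega\in\nabla_{A_G,\mathbf 1}$ if and only if $\sigma(\omega)\in\nabla_{A_G,\mathbf 1}$ (the reverse implication following by applying $\sigma^{-1}\in{\rm Aut}(G)$). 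The gauge constraint $v_{d+1}=1$ is harmless: every column $a_l$ has coordinate sum zero, so the whole system is invariant under scaling all $v_k$ by a common factor, and after applying $P_\sigma$ one rescales to restore $v_{d+1}=1$ without altering $\psi_{A_G,\mathbf 1}$ or $J_{A_G,\mathbf 1}$.

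I expect the main obstacle to be the correct bookkeeping of signs: establishing $P_\sigma A_G=A_G Q_\sigma$ with the precise signed permutation, and then confirming that those signs are exactly cancelled by the odd parity of $\psi_{A_G,\mathbf 1}$ and the even parity of the diagonal $D(v)$. This is precisely where the hypothesis $b=\mathbf 1$ is essential: for a general $b$ the scalars $\beta_l$ vary across edges and fail to commute with the edge permutation $Q_\sigma$, so the covariance identities above would break down.
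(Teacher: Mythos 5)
Your proof is correct and rests on the same key fact as the paper's: the equivariance of $v \mapsto A_G\,\psi_{A_G,\mathbf 1}(v)$ under the coordinate permutations induced by $\sigma$, with the orientation signs cancelling because $\psi_{A_G,\mathbf 1}$ is odd under edge reversal (the paper verifies this by writing out $f_G(v)_k$ coordinate-wise, you via the intertwining relation $P_\sigma A_G = A_G Q_\sigma$). Your version is slightly more explicit than the paper's, which leaves the conjugation $J_{A_G,\mathbf 1}(P_\sigma v) = P_\sigma J_{A_G,\mathbf 1}(v) P_\sigma^t$ and the normalization $v_{d+1}=1$ implicit, but the mathematical content is the same.
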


\begin{proof}
    The group ${\rm Aut}(G)$ acts on $(\mathbb{C}^*)^{d+1}$ and on $\mathbb{C}^{d+1}$ by permuting coordinates. The proposition follows from the observation that the map $f_G: (\mathbb{C}^*)^{d+1} \rightarrow \mathbb{C}^{d+1}$ given by $v \mapsto A_G \, \psi_{A_G,\bf 1}(v)$ is equivariant with respect to these actions, meaning that $f_G(\sigma(v)) = \sigma(f_G(v))$, $\sigma \in {\rm Aut}(G)$. 
    To prove this, write the $k$-th coordinate of $f_G(v)$ as 
    \[
    f_G(v)_k \, = \, \sum_{(k,j)>0} \tfrac{1}{2i}(v_k v_j^{-1}-v_k^{-1} v_j)-\sum_{(k,j)<0} \tfrac{1}{2i}(v_j v_k^{-1}-v_j^{-1} v_k), \]
    where $(k,j)>0$ denotes a sum over all edges $(k,j)$ oriented from $k$ to $j$, and similarly for $(k,j) < 0 $. This simplifies to 
    $f_G(v)_k=\sum_{(k,j)\in E_k} \tfrac{1}{2i}(v_k v_j^{-1}-v_k^{-1} v_j)$, where $E_k$ is the set of all edges adjacent to vertex $k$.
    Now the equality $f_G(\sigma (v))_k = f_G(v)_{\sigma(k)}$ is clear from
     \[
     \sum_{(k,j)\in E_k} \tfrac{1}{2i}(v_{\sigma(k)} {v_{\sigma(j)}}^{-1}-{v_{\sigma(k)}}^{-1} v_{\sigma(j)}) = \sum_{(\sigma(k),\sigma(j))\in E_{\sigma(k)}} \tfrac{1}{2i}(v_{\sigma(k)} {v_{\sigma(j)}}^{-1}-{v_{\sigma(k)}}^{-1} v_{\sigma(j)}),
     \]
     since $(k,j)\in E_k$ if and only if $(\sigma(k),\sigma(j))\in E_{\sigma(k)}$, by definition of ${\rm Aut}(G)$.
\end{proof}

\begin{example}
    Expanding $\Delta_{A({C_3}),\bf 1}$ from Example \ref{ex:discrunning} as a polynomial in $\omega_1$ and $\omega_2$, we obtain 41 terms, all of even degree. The fact that $\Delta_{A_{C_3},\bf 1}(\omega_1,\omega_2,\omega_3)$ can be expressed in terms of elementary symmetric polynomials mirrors the fact that ${\rm Aut}(C_3) = S_3$.
\end{example}

\vspace{-0.5cm}

\footnotesize

\bibliographystyle{abbrv}
\bibliography{references}

\noindent{\bf Authors' addresses:}
\smallskip

\noindent Francesco Maria Mascarin, MPI-MiS Leipzig
\hfill {\tt francesco.mascarin@mis.mpg.de}

\noindent Simon Telen, MPI-MiS Leipzig
\hfill {\tt simon.telen@mis.mpg.de}

\end{document}